\documentclass[reqno]{amsart}
\usepackage[a4paper]{geometry}

\usepackage[T2A]{fontenc}
\usepackage[utf8]{inputenc}
\usepackage[ukrainian,russian,english]{babel}

\usepackage{comment}

\usepackage{graphicx}
\usepackage{amsmath}

\DeclareFontFamily{U}{mathx}{}
\DeclareFontShape{U}{mathx}{m}{n}{<-> mathx10}{}
\DeclareSymbolFont{mathx}{U}{mathx}{m}{n}

\DeclareMathAccent{\widehat}{0}{mathx}{"70}
\DeclareMathAccent{\widecheck}{0}{mathx}{"71}

\usepackage{amssymb}
\usepackage[mathscr]{eucal}
\usepackage{tikz}
\usetikzlibrary{positioning,calc,decorations.markings}
\usetikzlibrary{calc}
\usepackage{mathtools}
\usepackage{enumerate}
\usepackage{accents}
\usepackage{dsfont}
\usepackage{multicol}
\usepackage{color}
\usepackage[colorlinks=true]{hyperref}
\hypersetup{urlcolor=blue,citecolor=red,linkcolor=blue}
\usepackage[initials]{amsrefs}
\definecolor{darkgreen}{rgb}{0.13, 0.55, 0.13}
\numberwithin{equation}{section}
\theoremstyle{plain}
\newtheorem{theorem}{Theorem}[section]
\newtheorem{proposition}[theorem]{Proposition}
\newtheorem{corollary}[theorem]{Corollary}
\newtheorem{lemma}[theorem]{Lemma}
\theoremstyle{definition}
\newtheorem*{rh-pb*}{Basic RH problem}
\newtheorem*{rh-I*}{RH problem normalized as $I$ at $\infty$}
\newtheorem*{sol-rh-pb*}{Soliton RH problem}
\newtheorem*{data*}{Data of this RH problem associated with $\BS{u_0(x)}$}
\theoremstyle{remark}
\newtheorem{remark}[theorem]{Remark}
\newtheorem*{notations*}{Notations}
\providecommand{\BS}[1]{\boldsymbol{#1}}  
\providecommand{\D}[1]{\mathbb{#1}}
\newcommand{\dd}{\mathrm{d}}
\newcommand{\eul}{\mathrm{e}}
\newcommand{\ii}{\mathrm{i}}
\newlength{\dhatheight}

\renewcommand{\Im}{\operatorname{Im}}

\newcommand{\ord}{\mathrm{O}}
\DeclareMathOperator{\Res}{Res}

\newcommand{\red}{\textcolor{red}}

\newif\ifshort
\shorttrue

\title{The sine Gordon equation in light-cone coordinates on the half lines revisited: a Riemann--Hilbert approach}

\author[Iryna Karpenko]{Iryna Karpenko}
\address{IK: Faculty of Mathematics\\ University of Vienna\\
Oskar-Morgenstern-Platz 1\\ 1090 Wien\\ Austria\\ and B. Verkin Institute for Low Temperature Physics and Engineering\\ 47, Nauky ave\\ 61103 Kharkiv\\ Ukraine}
\email{\href{mailto:iryna.karpenko@univie.ac.at}{iryna.karpenko@univie.ac.at}}

\begin{document}

\begin{abstract} 
In this work, we study the initial boundary value (IBV) problems 
for the sine-Gordon (sG) equation in the light-cone coordinates  $u_{xt}=\sin u$  in the quarter planes $x> 0$, $t>0$ and $x< 0$, $t>0$ assuming a suitable decay as $x\to +\infty$ or as $x\to -\infty$. Employing the Riemann–Hilbert (RH) problem framework, we demonstrate that these two IBV problems differ significantly with respect to the boundary data required for well-posedness. Specifically, the solution of the ``right problem'' ($x\ge 0$)
is uniquely determined by the initial data $u(x,0)$, $x\ge 0$ alone whereas for the ``left problem'' ($x\le 0$), the boundary
data $u(0,t)$ has to be prescribed in addition to the initial data in order to obtain a well-posed problem.

\end{abstract}

\maketitle

\section{Introduction}\label{sec:1}

The sine–Gordon (sG) equation is one of the classical integrable nonlinear equations and has been extensively studied. In the  laboratory coordinates $(y,\tau)$ it takes the form
\[
u_{yy}-u_{\tau\tau}+\sin u=0.
\]
The sG equation arises in differential geometry in the theory of surfaces of constant negative curvature and appears in several areas of physics, including the theory of crystal dislocations and the dynamics of Josephson junctions (see, e.g., \cites{FrenkelKontorova1939,ScottChuMcLaughlin1973,AblowitzClarkson1991,FaddeevTakhtajan1987}).

Introducing the light-cone variables
\[
x=\frac{1}{2}(y+\tau),\quad t=\frac{1}{2}(y-\tau)
\]
the equation transforms into
\begin{equation}\label{sG}
u_{xt} = \sin u.
\end{equation}
In these variables,
$x$ and $t$ correspond to the characteristic directions of a hyperbolic equation.

The Cauchy problem on the whole line for the sG equation was solved using the inverse scattering transform by Ablowitz, Kaup, Newell, and Segur (see \cites{AKNS1973,KaupNewell1978}). This method reveals the rich structure of the sG equation, including the existence of localized solutions such as kinks and breathers, which correspond to discrete eigenvalues of the associated spectral problem (see \cite{KaupNewell1978}).

The analysis becomes
considerably more complicated when the sG equation is posed in domains
with boundaries, such as quarter-planes and semi-strips. To address such problems, Fokas introduced a general and systematic framework known as the Unified Transform Method (UTM), widely referred to as the Fokas method (see \cite{F02}), which has subsequently been developed and extended by many authors \cites{fokas2008unified,BS08,FIS,BFS06,KS25,LF2009}. The method is based on the simultaneous spectral analysis of both equations of the associated Lax pair.

In \cite{FokasIts}, Fokas and Its studied the sG equation in laboratory coordinates in the quarter-plane with prescribed initial data and boundary values (see also \cites{F02,HL18}). In a later work \cite{Fokas1997}, the equation $u_{xt}=-\sin u$ was considered on a finite spatial interval $(0,l)$. In both formulations, the initial profile and the
boundary values are assumed to be prescribed as part of the data of the problem. 

An inverse spectral approach was used by Sakhnovich, who investigated the
sG equation in semi-strip domains using the Weyl function associated
with an auxiliary Dirac-type system arising from the Lax pair
(see \cites{Sakhnovich1992,Sakhnovich2010}). In particular, this approach makes it possible to study the
evolution of the corresponding spectral data and to describe situations in which solutions become unbounded in the quarter-plane. In \cite{Vu2005}, 
the Dirichlet problem on the half-line is considered with prescribed initial data 
$u(x,0)$ and boundary data 
$u(0,t)$. These data are required to satisfy appropriate compatibility conditions, and the solution is reconstructed from the corresponding scattering data.

A different inverse spectral approach to the Dirichlet problem on the half-line was proposed by Leon \cite{Leon2002}. In this work, the sG equation in light-cone coordinates (in the form $u_{xt}=-\sin u$, which can be related to \eqref{sG} by the change of variable $x\mapsto -x$)  is analyzed using an alternative Lax pair associated with a Schr\"odinger-type spectral problem on the half-line. 

Despite the extensive literature on initial-boundary value (IBV) problems for the sine-Gordon equation, several questions concerning the well-posedness of these problems have remained open, particularly regarding the decay conditions as $|x|\to\infty$. In the present paper, we address these questions by studying two initial-boundary value problems for the sG equation in the light-cone coordinates \eqref{sG}, posed on the half-lines $x\ge 0$ (Problem I) and $x\le 0$ (Problem II) building on the Fokas method for half line problems. 

In what follows, we will use standard functional spaces
\begin{align*}
    & H^{2,2}(\Omega):=\{f(x)\in L^2(\Omega):~ x^2f(x), f'(x), x^2f'(x),f''(x), x^2f''(x) \in L^2(\Omega)\},\\
    & C([0,T],X)
:=
\left\{
u:[0,T]\to X:
u \text{ is continuous on } [0,T]
\text{ with values in } X
\right\},\\
& C^{k}([0,T],X)
:=
\{
u:[0,T]\to X:
\partial_t^{,j}u\in C([0,T],X),
\quad j=0,1,\ldots,k
\},
\end{align*}
and 
\[
 \tilde H^{1,2}(\Omega):=\{f(x)\in L^1_{loc}(\Omega): (1+x^2)\sin \frac{f(x)}{2},(1+x^2)f_x(x)\in L^2(\Omega)\}
\]
with a pseudometric 
\[
d_{\tilde H^{1,2}}(f,g)
:=
\left(
\left\|(1+x^2)\sin\!\left(\frac{f-g}{2}\right)\right\|_{L^2(\Omega)}^2
+
\left\|(1+x^2)(f_x-g_x)\right\|_{L^2(\Omega)}^2
\right)^{\frac12}.
\]

For Problem I, we assume that 
the initial condition $u(x,0) = u_0(x)$ for $x \geq 0$
is given and $u_0(x)\in H^{2,2}((0,+\infty))+2\pi k$. We consider the following problem
\begin{equation}\label{pr_1}
    \begin{aligned}
        &u_{xt} = \sin u,\quad x\geq 0,\quad  0\leq t<T<\infty,\\
         &u(x,0) = u_0(x), \quad  x \geq 0
    \end{aligned}
\end{equation}
and seek solutions $u(x,t)$ such that $u(x, t)\in C^1([0,T],\tilde H^{1,2}(0,\infty))$.

For Problem II, we assume that 
the initial condition  $u(x,0) = u_0(x)\in H^{2,2}((-\infty,0))+2\pi k$ for $x \leq 0$, and the boundary condition $ u(0,t)=v_0(t)\in H^{1}(0,T)$
are given. We consider the following IBV problem 
\begin{equation}\label{pr_2}
    \begin{aligned}
        &u_{xt} = \sin u,\quad x\leq 0,\quad  0\leq t<T<\infty,\\
         &u(x,0) = u_0(x), \quad  x \leq 0,\\
        & u(0,t)=v_0(t),\qquad t\in[0,T]
    \end{aligned}
\end{equation}
and seek solutions $u(x,t)$ such that
 $u(x,t)\in C^1([0,T],\tilde H^{1,2}((-\infty,0)))$.

The conditions imposed on the solution of the IBV problems at the spatial infinity
affect crucially the well-posedness of half-line problems.
A striking illustration is provided by Sakhnovich’s treatment of the sine--Gordon equation on the quarter-plane $x\ge 0$, $t\ge 0$ \cite{Sakhnovich2010}. Under suitable assumptions on the boundary data, assuming also that 
the spatial derivative of the solution is bounded in the whole quarter-plane, the boundary data determine uniquely the initial Weyl function 
and thus the initial values; see \cite[Corollary~4.10]{Sakhnovich2010}. In particular, zero boundary data force the initial Weyl function, and therefore the corresponding initial data, to be trivial; see \cite[Example~4.11]{Sakhnovich2010}.  

The outline of the paper is as follows.

In Section \ref{sec:2}, we present the basic tools for our analysis derived from the Lax pair representation of the sG equation, namely the Jost solutions (“eigenfunctions”) and the associated spectral functions (scattering coefficients). 

In Section \ref{sec:3}, we discuss the compatibility conditions for the initial and boundary values.

In Section \ref{sec:4}, we describe the inverse spectral problems for the initial and boundary values in terms of solutions of the associated RH problems.

In Section \ref{sec:5}, the master RH problems are constructed,
 whose solutions provide the solutions to Problem I and Problem II.  

In Section \ref{sec:6}, we show how a local solution of the sG equation can be constructed from the solution of a RH problem parametrized by $x$ and $t$. 

In Section \ref{sec:7}, the differentiability of the solution of the RH problem is discussed.

In Section \ref{sec:8}, we address Problem I, for which we find
that the initial data alone uniquely determine its solution. 

In Section \ref{sec:9}, we focus on Problem II, where we show that in addition to the initial data, the boundary value $u(0,t)$ should be prescribed to uniquely determine the solution.

\begin{notations*}
In what follows, $\sigma_1\coloneqq\left(\begin{smallmatrix}0&1\\1&0\end{smallmatrix}\right)$, $\sigma_2\coloneqq\left(\begin{smallmatrix}0&-\ii\\\ii&0\end{smallmatrix}\right)$, and $\sigma_3\coloneqq\left(\begin{smallmatrix}1&0\\0&-1\end{smallmatrix}\right)$ denote the standard Pauli matrices, 
$e^{\hat\sigma_3}A:=e^{\sigma_3}A e^{-\sigma_3}$,
$\mathbb{C}^+:=\{\lambda\in\mathbb{C}|\Im(\lambda)> 0\}$, and 
$\mathbb{C}^-:=\{\lambda\in\mathbb{C}|\Im(\lambda)< 0\}$. We also let $f^*(k)\coloneqq\overline{f(\bar k)}$ denote the Schwarz 
reflection of a function $f(k)$, $k\in\D{C}$.
\end{notations*}

\section{Eigenfunctions and Spectral Functions}\label{sec:2}

The sine-Gordon equation \eqref{sG} is the compatibility condition for the Lax pair
\begin{subequations}\label{Lax}
\begin{align}\label{Lax-x}
\tilde\Phi_x&=U\tilde\Phi,\\
\label{Lax-t}
\tilde\Phi_t&=V\tilde\Phi,
\end{align}
\end{subequations}
where the coefficients $U\equiv U(x,t,k)$ and $V\equiv V(x,t,k)$ are defined by
\begin{subequations}\label{Lax-UV}
\begin{align}\label{Lax-U}
U&=-\ii k \sigma_3+\frac{u_x}{2}\begin{pmatrix}
        0&1\\-1&0
    \end{pmatrix},\\
\label{Lax-V}
V&=-\frac{1}{4\ii k}\begin{pmatrix}
        \cos u&-\sin u\\-\sin u&-\cos u
    \end{pmatrix}.
\end{align}
\end{subequations}

The first step in the spectral analysis is to investigate the analyticity properties, in the complex spectral parameter $k$, of the eigenfunctions
associated with the Lax pair. Studying their analyticity, symmetry, and
boundedness in appropriate regions of the complex $k$-plane leads to precise
relations among the different eigenfunctions. These relations can be
formulated as a matrix Riemann--Hilbert factorization problem in the spectral
variable.

Moreover, the coefficient matrices $U$ and $V$ in \eqref{Lax-UV} are traceless.
Consequently, for any matrix solution $\Phi(x,t,k)$ of the Lax pair
\eqref{Lax} (constructed from two linearly independent vector solutions), we have
\[
\partial_x \det \Phi = 0, 
\qquad
\partial_t \det \Phi = 0.
\]
Hence $\det \Phi$ is independent of both $x$ and $t$, and therefore
is constant throughout the domain.

\subsection{Problem I}\label{subsec:2.1}

Assume that we are given a solution $u(x,t)$ of the Problem I \eqref{pr_1} such that $u(\cdot, t)\in \tilde H^{1,2}(0,\infty)$
for all $t\in[0,T]$. We study the analytic properties of the associated eigenfunctions of the Lax pair in the complex spectral plane and derive relations among them that can be formulated as a factorization Riemann–Hilbert problem.

\subsubsection{Eigenfunctions near $k=\infty$}\label{sec:2.1}

To have good control as $k\to\infty$, we rewrite the Lax pair \eqref{Lax} as
\begin{subequations}\label{Lax_infty}
\begin{align}\label{Lax-x_infty}
\tilde\Phi_x+\ii k \sigma_3\tilde\Phi&=U_\infty\tilde\Phi,\\
\label{Lax-t_infty}
\tilde\Phi_t+\frac{1}{4\ii k}\sigma_3\tilde\Phi&=V_\infty\tilde\Phi,
\end{align}
\end{subequations}
where the coefficients $U\equiv U(x,t,k)$ and $V\equiv V(x,t,k)$ are defined by
\begin{subequations}\label{Lax-UV_infty}
\begin{align}\label{Lax-U_infty}
U_\infty&=\frac{u_x}{2}\begin{pmatrix}
        0&1\\-1&0
    \end{pmatrix},\\
\label{Lax-V_infty}
V_\infty&=-\frac{1}{4\ii k}\begin{pmatrix}
        \cos u-1&-\sin u\\-\sin u&-\cos u+1
    \end{pmatrix}.
\end{align}
\end{subequations}

Define $Q(x,t,k)$ by 
\begin{subequations}\label{Qp}
\begin{equation}\label{Q}
Q(x,t,k):= \ii k p(x,t,k)\sigma_3, 
\end{equation}
with
\begin{equation}\label{p}
p(x,t,k):=x-\frac{t}{4 k^2}.
\end{equation}
\end{subequations}
Then $p$ satisfies
\[
p_x=1,\qquad
p_t=-\frac{1}{4 k^2},\qquad p(0,0,k)=0,
\]
and the Lax pair \eqref{Lax_infty} can be rewritten in the form
\begin{subequations}\label{phi-tilde}
   \begin{align}
&\tilde \Phi_{ x}+Q_x\tilde \Phi = U_\infty \tilde \Phi,\\
&\tilde \Phi_{ t} +Q_t\tilde \Phi = V_\infty \tilde \Phi.
\end{align} 
\end{subequations}
We will see that this form is convenient for controlling the behavior of the corresponding solutions as $k\to\infty$.

We construct the solutions $\tilde{\Phi}_{ j}$, $j=1,2,3$, of equation~\eqref{phi-tilde} via solutions $\Phi_{ j}$ of the associated Volterra-type integral equations. The functions $\Phi_{ j}$ are uniquely determined by their initial integration points and are related to $\tilde{\Phi}_{ j}$ through
\[
\tilde{\Phi}_{ j}(x,t,\lambda)
= \Phi_{ j}(x,t,\lambda)\, e^{-\ii kp(x,t,\lambda)\sigma_3}.
\]
They satisfy the integral equation
\begin{equation}\label{inteq_inf}
\Phi(x,t,k)=I+\int_{(x^*,t^*)}^{(x,t)}
	\eul^{\ii k(p(y,\tau,k)-p(x,t,k))\hat\sigma_3}( U_\infty\Phi \dd y+V_\infty\Phi \dd \tau)(y,\tau,k),
\end{equation}
where the initial point $(x^*,t^*)$ is chosen as $(0,T)$, $(0,0)$, or $(+\infty,t)$ for $j=1,2,3$, respectively. The corresponding integration contours are shown in Figure~\ref{fig:integration-paths}. Owing to the compatibility of the Lax pair, the value of the integral depends only on its endpoints.

\begin{figure}[ht]
\centering
\begin{tikzpicture}[scale=1]
  \def\x{2.2}  
  \def\t{1.3}  
  \def\T{2.6}  
  \def\W{3.2}  

  \begin{scope}
    \draw[->] (0,0) -- (\W,0) node[below right] {$y$};
    \draw[->] (0,0) -- (0,\W) node[above left] {$\tau$};

    \node[below left] at (0,0) {$0$};
    \node[below]      at (\x,0) {$x$};
    \node[left]       at (0,\t) {$t$};

    \fill (0,\T) circle (1.2pt) node[left] {$T$};

    \draw[dashed] (\x,0) -- (\x,\t);

    \draw[very thick,postaction={decorate},
          decoration={markings, mark=at position 0.5 with {\arrow{latex}}}] 
          (0,\T) -- (0,\t);
    \draw[very thick,postaction={decorate},
          decoration={markings, mark=at position 0.5 with {\arrow{latex}}}] 
          (0,\t) -- (\x,\t);

    \fill (\x,\t) circle (1.4pt);
  \end{scope}

  \begin{scope}[xshift=5.2cm]
    \draw[->] (0,0) -- (\W,0) node[below right] {$y$};
    \draw[->] (0,0) -- (0,\W) node[above left] {$\tau$};

    \node[below left] at (0,0) {$0$};
    \node[below]      at (\x,0) {$x$};
    \node[left]       at (0,\t) {$t$};

    \fill (0,\T) circle (1.2pt) node[left] {$T$};

    \draw[dashed] (\x,0) -- (\x,\t);

    \draw[very thick,postaction={decorate},
          decoration={markings, mark=at position 0.5 with {\arrow{latex}}}] 
          (0,0) -- (0,\t);
    \draw[very thick,postaction={decorate},
          decoration={markings, mark=at position 0.5 with {\arrow{latex}}}] 
          (0,\t) -- (\x,\t);

    \fill (\x,\t) circle (1.4pt);
  \end{scope}

  \begin{scope}[xshift=10.4cm]
    \draw[->] (0,0) -- (\W,0) node[below right] {$y$};
    \draw[->] (0,0) -- (0,\W) node[above left] {$\tau$};

    \node[below left] at (0,0) {$0$};
    \node[below]      at (\x,0) {$x$};
    \node[left]       at (0,\t) {$t$};

    \fill (0,\T) circle (1.2pt) node[left] {$T$};

    \draw[dashed] (\x,0) -- (\x,\t);
    \draw[dashed] (0,\t) -- (\x,\t);

    \draw[very thick,postaction={decorate},
          decoration={markings, mark=at position 0.5 with {\arrow{latex}}}] 
          (\W,\t) -- (\x,\t);

    \fill (\x,\t) circle (1.4pt);
  \end{scope}
\end{tikzpicture}

\caption{Paths of integration for $\Phi_{1}, \Phi_{2},$ and $\Phi_{3}$
($\Phi_{01}, \Phi_{02},$ and $\Phi_{03}$ ).}
\label{fig:integration-paths}
\end{figure}

More precisely,
\begin{equation}\label{inteq_inf1}
    \begin{aligned}
\Phi_{ 1}(x,t,\lambda)=&I+
\int_{0}^{x}
	\eul^{-\ii k(x-y)\hat\sigma_3}( U_\infty\Phi_{1})(y,t,k) \dd y  - \eul^{-\ii kx\hat\sigma_3}
\int_{t}^{T}
	\eul^{ \frac{\tau-t}{4\ii k}\hat\sigma_3}( V_\infty\Phi_{1})(0,\tau,k) \dd \tau,
\end{aligned}
\end{equation}
    
\begin{equation}\label{inteq_inf2}
    \begin{aligned}
\Phi_{ 2}(x,t,k)=&I+
\int_{0}^{x}
	\eul^{-\ii k(x-y)\hat\sigma_3}( U_\infty\Phi_{2})(y,t,k) \dd y  + \eul^{-\ii kx\hat\sigma_3}
\int_{0}^{t}
	\eul^{\frac{\tau-t}{4\ii k}\hat\sigma_3}( V_\infty\Phi_{2})(0,\tau,k) \dd \tau,
\end{aligned}
\end{equation}

    \begin{equation}\label{inteq_inf3}
\Phi_{ 3}(x,t,k)=I-
\int_{x}^{+\infty}
	\eul^{\ii k(y-x)\hat\sigma_3}( U_\infty\Phi_{3})(y,t,k) \dd y .
\end{equation}

The domains where the exponentials are bounded in the complex $k$-plane are separated by the real axis, see 
 Figure \ref{fig:sign}. This partition of the $k$-plane is determined by the signature table of $\Im k$ and $\Im \frac{1}{k}$.

\begin{figure}[ht]
    \centering
\begin{tikzpicture}[scale=1.2]

        \node at (0,-1) {$\mathrm{sign}\Im k$};
    
    \draw[thick] (-2.5,0) -- (2.5,0);
    
    \node at (0,0.5) {$+$};
    \node at (0,-0.5) {$-$};

    \node at (7,-1) {$\mathrm{sign}\Im \frac{1}{k}$};
    
    \draw[thick] (4.5,0) -- (9.5,0);
    
    \node at (7,0.5) {$-$};
    \node at (7,-0.5) {$+$};
\end{tikzpicture}
    \caption{Signature tables for $p(x,t,k)$  }
    \label{fig:sign}
\end{figure}

Let $A^{(1)}$ and $A^{(2)}$ denote the first and second columns of a $2\times 2$ matrix $A = \bigl( A^{(1)}\ \ A^{(2)} \bigr)$ and notice that if $f\in \tilde H^{1,2}(0,\infty)$, then $f_x,~\sin f,~1-\cos f\in L^1(0,\infty)$. Since we assumed that $u(\cdot, t)\in \tilde H^{1,2}(0,\infty)$, it follows that the Neumann series expansions of \eqref{inteq_inf1} -- \eqref{inteq_inf3} converge in the corresponding half-planes of $k$ and yields the following properties of $\Phi_{0 i}^{(j)}(x,t,k)$:

\begin{enumerate}
    \item $\Phi_{ 1}^{(1)}(x,t,k)$ is analytic in $\mathbb{C}\setminus \{0\}$ and $\Phi_{1}^{(1)}(x,t,k)=\begin{pmatrix}
        1\\0
    \end{pmatrix}+O(\frac{1}{k})$ as $k\to \infty$ in $\mathbb{C}^+$. Moreover, $\Phi_{1}^{(1)}(0,t,k)=\begin{pmatrix}
        1\\0
    \end{pmatrix}+O(\frac{1}{k})$ as $k\to \infty$ in $\mathbb{C}$.

    \item $\Phi_{ 1}^{(2)}(x,t,k)$ is analytic in $\mathbb{C}\setminus \{0\}$ and $\Phi_{ 1}^{(2)}(x,t,k)=\begin{pmatrix}
        0\\1
    \end{pmatrix}+O(\frac{1}{k})$ as $k\to \infty$ in $\mathbb{C}^-$. Moreover, $\Phi_{ 1}^{(2)}(0,t,k)=\begin{pmatrix}
        0\\1
    \end{pmatrix}+O(\frac{1}{k})$ as $k\to \infty$ in $\mathbb{C}$.

     \item $\Phi_{ 2}^{(1)}(x,t,k)$ is analytic in $\mathbb{C}\setminus\{0\}$ and $\Phi_{ 2}^{(1)}(x,t,k)=\begin{pmatrix}
        1\\0
    \end{pmatrix}+O(\frac{1}{k})$ as $k\to\infty$ in $\mathbb{C}^+$.

    \item $\Phi_{ 2}^{(2)}(x,t,k)$ is analytic in $\mathbb{C}\setminus\{0\}$ and $\Phi_{ 2}^{(2)}(x,t,k)=\begin{pmatrix}
        0\\1
    \end{pmatrix}+O(\frac{1}{k})$ as  $k\to\infty$ in $\mathbb{C}^-$.

      \item $\Phi_{ 3}^{(1)}(x,t,k)$ is analytic in $\mathbb{C}^-$ and continuous up to real line.   Moreover, $\Phi_{ 3}^{(1)}(x,t,k)$=$\begin{pmatrix}
        1\\0
    \end{pmatrix}+O(\frac{1}{k})$ as $k\to \infty$ in $\mathbb{C}^-$.

    \item $\Phi_{ 3}^{(2)}(x,t,k)$ is analytic in $\mathbb{C}^+$ and continuous up to real line. Moreover, $\Phi_{3}^{(2)}(x,t,k)=\begin{pmatrix}
        0\\1
    \end{pmatrix}+O(\frac{1}{k})$ as $k\to\infty$ in $\mathbb{C}^+$.
\end{enumerate}

Since the coefficient matrices in the Lax pair \eqref{Lax_infty} are traceless, it follows that
\begin{equation}\label{jost_det}
\det \Phi_{ j} \equiv 1, \qquad j=1,2,3.    
\end{equation}

Furthermore, the matrices $U_\infty(k)\equiv U_\infty(x,t,k)$ and 
$V_\infty(k)\equiv V_\infty(x,t,k)$ obey the symmetries
\begin{subequations}\label{sym-UV}
\begin{alignat}{3}\label{sym-U}
U_\infty(- k)&=\overline{U_\infty(\bar k)},&\qquad&
U_\infty(-k)=\sigma_2U_\infty(k)\sigma_2,\\
V_\infty(- k)&=\overline{V_\infty(\bar k)},&&
V_\infty(-k)=\sigma_2V_\infty(k)\sigma_2,
\end{alignat}
\end{subequations}
while the function $p(k)\equiv p(x,t,k)$ satisfies
\begin{equation}\label{sym-p}
p^*(k)=p(k)=p(-k).
\end{equation}
Consequently, the eigenfunctions $\Phi_{ j}$ inherit the same symmetries:
\begin{equation}\label{sym-Phi}
\Phi_{ j}(-k)=\overline{\Phi_{ j}(\bar k)},\qquad
\Phi_{ j}(-k)=\sigma_2\Phi_{ j}(k)\sigma_2.
\end{equation}

Since the functions $\tilde\Phi_{ j}$ solve both equations of \eqref{Lax_infty}, they are related (where defined) by matrices independent of $x$ and $t$. Hence,
\begin{subequations}\label{rel_inf}
    \begin{align}\label{rel_inf_1}
        &\Phi_{ 3}(x,t,k)=\Phi_{ 2}(x,t,k)\eul^{-\ii k p(x,t,k)\sigma_3}s(k)\eul^{\ii k p(x,t,k)\sigma_3},\qquad k\in\mathbb{R}\setminus\{0\}\\\label{rel_inf_2}
        &\Phi_{ 1}(x,t,k)=\Phi_{2}(x,t,k)\eul^{-\ii k p(x,t,k)\sigma_3}S(k)\eul^{\ii k p(x,t,k)\sigma_3}, \qquad k\in\mathbb{C}\setminus\{0\}.
    \end{align}
\end{subequations}
The symmetries \eqref{sym-Phi} imply that the matrices $s(k)$ and $S(k)$ admit the forms
       \begin{equation}
            \label{s}
s(k)=\Phi_{\infty 3}(0,0,k)=\begin{pmatrix}
        a^*(k) & b(k)\\
        -b^*( k) & a(k)
    \end{pmatrix}
    \end{equation}
and
           \begin{equation}
        \label{S}
        S(k)=\Phi_{\infty 1}(0,0,k)=\begin{pmatrix}
        A^*(k) & B(k)\\
        -B^*(k) & A(k)
    \end{pmatrix}\end{equation}
    with some $a(k)$, $b(k)$, $A(k)$, and  $B(k)$.

Relation \eqref{rel_inf_1} constitutes the scattering relation for the
$x$-equation \eqref{Lax-U_infty}, \eqref{Lax-U_infty} (with $t$ being a parameter). Likewise, relation \eqref{rel_inf_2} constitutes the scattering relation for the $t$-equation \eqref{Lax-V_infty},
\eqref{Lax-V_infty} (with $x$ being a parameter). 

\subsubsection{Eigenfunctions near $k=0$}\label{sec:2.2}

In order to have good control as $k \to 0$, we introduce 
\[
\tilde\Phi_0(x,t,k):= P(x,t)\tilde\Phi(x,t,k),
\]
where
\[
P(x,t):=\begin{pmatrix}
\cos \frac{u}{2} & -\sin \frac{u}{2}  \\
\sin \frac{u}{2} & \cos \frac{u}{2} \\
\end{pmatrix}.
\]
This transforms \eqref{Lax} into
\begin{subequations}\label{Lax_0}
\begin{align}\label{Lax_0_x}
    & \tilde\Phi_{0 x}+\ii k\sigma_3\tilde\Phi_0=U_0\tilde\Phi_0,\\\label{Lax_0_t}
   &\tilde\Phi_{0 t}+\frac{1}{4\ii k}\sigma_3\tilde\Phi_0=V_0 \tilde\Phi_0,
\end{align}
\end{subequations}
where
\begin{subequations}\label{Lax-UV_0}
\begin{align}\label{Lax-U_0}
U_0&=-\ii k \begin{pmatrix}
        \cos u-1&\sin u\\\sin u&-\cos u+1
    \end{pmatrix},\\
\label{Lax-V_0}
V_0&=\frac{ u_t}{2}\begin{pmatrix}
        0&-1\\1&0
    \end{pmatrix}.
\end{align}
\end{subequations}

We introduce the solutions $\tilde{\Phi}_{0j}(x,t,k)$, $j=1,2,3$, of 
\eqref{Lax_0} analogously to $\tilde{\Phi}_{ j}(x,t,k)$ by defining
\[
\Phi_{0}(x,t,k):=\tilde \Phi_{0}(x,t,k)\eul^{\ii k p(x,t,k)\sigma_3},
\]
where the functions $\Phi_{0j}(x,t,k)$ solve the associated integral equations
\begin{equation}\label{inteq_0}
\Phi_{0}(x,t,k)=I+\int_{(x^*,t^*)}^{(x,t)}
	\eul^{\ii k(p(y,\tau,k)-p(x,t,k))\hat\sigma_3}( U_0\Phi_0 \dd y+V_0\Phi_0 \dd \tau)(y,\tau,k).
\end{equation}
More precisely,
\begin{equation}\label{inteq_01}
    \begin{aligned}
\Phi_{0 1}(x,t,\lambda)=&I+
\int_{0}^{x}
	\eul^{-\ii k(x-y)\hat\sigma_3}( U_0\Phi_{01})(y,t,k) \dd y  - \eul^{-\ii kx\hat\sigma_3}
\int_{t}^{T}
	\eul^{ \frac{\tau-t}{4\ii k}\hat\sigma_3}( V_0\Phi_{01})(0,\tau,k) \dd \tau,
\end{aligned}
\end{equation}
    
\begin{equation}\label{inteq_02}
    \begin{aligned}
\Phi_{ 02}(x,t,k)=&I+
\int_{0}^{x}
	\eul^{-\ii k(x-y)\hat\sigma_3}( U_0\Phi_{02})(y,t,k) \dd y  + \eul^{-\ii kx\hat\sigma_3}
\int_{0}^{t}
	\eul^{\frac{\tau-t}{4\ii k}\hat\sigma_3}( V_0\Phi_{02})(0,\tau,k) \dd \tau,
\end{aligned}
\end{equation}

    \begin{equation}\label{inteq_03}
\Phi_{0 3}(x,t,k)=I-
\int_{x}^{+\infty}
	\eul^{\ii k(y-x)\hat\sigma_3}( U_0\Phi_{03})(y,t,k) \dd y .
\end{equation}

The following properties of $\Phi_{0 i}^{(j)}(x,t,k)$ follow from the Neumann series for \eqref{inteq_01}--\eqref{inteq_03}:

\begin{enumerate}
    \item $\Phi_{0 1}^{(1)}(x,t,k)$ is analytic in $\mathbb{C}\setminus \{0\}$ and $\Phi_{01}^{(1)}(x,t,k)=\begin{pmatrix}
        1\\0
    \end{pmatrix}+O(k)$ as $k\to 0$ in $\mathbb{C}^-$.

    \item $\Phi_{0 1}^{(2)}(x,t,k)$ is analytic in $\mathbb{C}\setminus \{0\}$ and $\Phi_{0 1}^{(2)}(x,t,k)=\begin{pmatrix}
        0\\1
    \end{pmatrix}+O(k)$ as $k\to 0$ in $\mathbb{C}^+$.

     \item $\Phi_{0 2}^{(1)}(x,t,k)$ is analytic in $\mathbb{C}\setminus\{0\}$ and $\Phi_{0 2}^{(1)}(x,t,k)=\begin{pmatrix}
        1\\0
    \end{pmatrix}+O(k)$ as $k\to0$ in $\mathbb{C}^+$.

    \item $\Phi_{0 2}^{(2)}(x,t,k)$ is analytic in $\mathbb{C}\setminus\{0\}$ and $\Phi_{0 2}^{(2)}(x,t,k)=\begin{pmatrix}
        0\\1
    \end{pmatrix}+O(k)$ as  $k\to0$ in $\mathbb{C}^-$.

      \item $\Phi_{0 3}^{(1)}(x,t,k)$ is analytic in $\mathbb{C}^-$  and continuous up to the boundary.
 Moreover, $\Phi_{0 3}^{(1)}(x,t,k)$= $\begin{pmatrix}
        1\\0
    \end{pmatrix}+O(k)$ as $k\to 0$ in $\mathbb{C}^-$.

    \item $\Phi_{0 3}^{(2)}(x,t,k)$ is analytic in $\mathbb{C}^+$  and continuous up to the boundary.
 Moreover, $\Phi_{03}^{(2)}(x,t,k)=\begin{pmatrix}
        0\\1
    \end{pmatrix}+O(k)$ as $k\to0$ in $\mathbb{C}^+$.
\end{enumerate}

Since the coefficient matrices in the Lax pair \eqref{Lax_0} are traceless, it follows that
\[
\det \Phi_{0 j} \equiv 1, \qquad j=1,2,3.
\]
Moreover, because the matrices $U_0(x,t,k)$, $ V_0(x,t,k)$ satisfy the symmetries \eqref{sym-UV}, the functions $\Phi_{0 j}(x,t,k)$ inherit the symmetries \eqref{sym-Phi}. 

Since the functions $\tilde\Phi_{0 j}$satisfy both equations of the Lax pair \eqref{Lax_0}, they are related (where defined) by matrices independent of $x$ and $t$; hence

\begin{subequations}\label{rel_0}
    \begin{align}\label{rel_0_1}
        &\Phi_{0 3}(x,t,k)=\Phi_{0 2}(x,t,k)\eul^{-\ii kp(x,t,k)\sigma_3}\tilde s(k)\eul^{\ii k p(x,t,k)\sigma_3},\qquad k\in\mathbb{R}\setminus\{0\}\\\label{rel_0_2}
        &\Phi_{0 1}(x,t,k)=\Phi_{0 2}(x,t,k)\eul^{-\ii k p(x,t,k)\sigma_3}\tilde S(k)\eul^{\ii k p(x,t,k)\sigma_3},\qquad k\in\mathbb{C}\setminus\{0\}.
    \end{align}
\end{subequations}

Due to the symmetries of $\Phi_{0 j}$,  $\tilde s(k)$ and $\tilde S(k)$ admit the forms 
 \begin{equation}
        \label{tils}
    \tilde s(k)=\Phi_{0 3}(0,0,k)=\begin{pmatrix}
        \tilde  a^*(k) & \tilde b(k)\\
       -\tilde  b^*(k) & \tilde a(k)
    \end{pmatrix}
    \end{equation}
and
 \begin{equation}
        \label{tilS}\tilde S(k)=\Phi_{0 1}(0,0,k)=\begin{pmatrix}
        \tilde A^*(k)  &\tilde  B(k)\\
        -\tilde B^*(k) &\tilde  A(k)
    \end{pmatrix}\end{equation}
    with some $\tilde a(k)$, $\tilde b(k)$, $\tilde A(k)$, and  $\tilde B(k)$.

\subsubsection{Relations between eigenfunctions of different Lax pairs}
Since $\Phi_0$ and $\Phi$ 
arise from the same system  \eqref{Lax}, they are related as follows:
\begin{subequations}\label{Phi_0_inf_rel}
\begin{equation}\label{Phi_0_inf_rel_}
 \Phi_{ j}(x,t,k)=P^{-1}(x,t)\Phi_{0 j}(x,t,k)\eul^{-\ii kp(x,t,k)\sigma_3}C_j(k)\eul^{\ii k p(x,t,k)\sigma_3}   
\end{equation}
with $C_j(k)$ given by
\begin{align}\label{Phi_0_inf_coeff}
    C_1&=\eul^{-\frac{\ii T}{4 k}\sigma_3}P(0,T)\eul^{\frac{\ii T}{4 k}\sigma_3},\\\label{Phi_0_inf_coeff2}
    C_2&=P(0,0),\\\label{Phi_0_inf_coeff3}
    C_3&=I.
\end{align}
\end{subequations}
Particularly,
\begin{equation}\label{Phi_0_inf_reduced_2}
    \Phi_{ 3}(x,t,k) =P^{-1}(x,t)\Phi_{0 3}(x,t,k).
\end{equation}
Furthermore, setting $(x,t)=(0,0)$ in \eqref{Phi_0_inf_rel} gives 
        \begin{align}\label{s_via_til_s}
          &s(k)=P^{-1}(0,0)\tilde s(k),\\\label{S_via_til_S}
          &S(k)=P^{-1}(0,0)\tilde S(k)C_1=P^{-1}(0,0)\tilde S(k)\eul^{-\frac{\ii T}{4 k}\sigma_3}P(0,T)\eul^{\frac{\ii T}{4 k}\sigma_3}
        \end{align}

\begin{remark}\label{rem:simpl}
In the case $u(0,0)=4\pi k$, we have $P^{-1}(0,0)=I$ 
 and thus \eqref{Phi_0_inf_rel_} for $j=2$
reduces to 
\begin{equation}\label{Phi_0_inf_reduced}
\Phi_{ 2}(x,t,k) =P^{-1}(x,t)\Phi_{0 2}(x,t,k).
\end{equation}
Consequently, in this case
\[
s(k)=\tilde s(k),
\]
or 
\[
a(k) = \tilde{a}(k),\quad
b(k) = \tilde{b}(k),
\]
which 
results in a significant simplification of the subsequent analysis
and constructions.
\end{remark}

In the general case, we set
\[
P^{-1}(0,0)=
\begin{pmatrix}
    \kappa_1^0 & \kappa_2^0 \\
    -\kappa_2^0 & \kappa_1^0
\end{pmatrix},
\]
where $\kappa_1^0$ and $\kappa_2^0$ are determined by
$u(0,0)$ through
\begin{equation}\label{kappas-x}
\kappa_1^0:=\cos \tfrac{u(0,0)}{2},
\qquad
\kappa_2^0:=\sin \tfrac{u(0,0)}{2}.  
\end{equation} 
With this notation, \eqref{s_via_til_s} reads as
\begin{equation}\label{tilde-a--a}
\begin{aligned}
&a(k)=-\kappa_2^0\tilde b(k)+\kappa_1^0\tilde a(k),\\
&b(k)=\kappa_1^0\tilde b(k)+\kappa_2^0\tilde a(k).
\end{aligned}
\end{equation}

Similarly, introducing \[
\kappa_1^T:=\cos \tfrac{u(0,T)}{2},
\qquad
\kappa_2^T:=\sin \tfrac{u(0,T)}{2},
\]
\eqref{S_via_til_S} reads as
\begin{equation}\label{tilde-A--A}
\begin{aligned}
&\tilde A(k)=e^{-\frac{\ii T}{2k}}
\,\kappa_2^T\big(\kappa_2^0 A^*(k)-\kappa_1^0 B^*(k)\big)
+
\kappa_1^T\big(\kappa_2^0 B(k)+\kappa_1^0 A(k)\big),\\
&\tilde B(k)=e^{-\frac{\ii T}{2k}}
\,\kappa_2^T\big(\kappa_1^0 A^*(k)+\kappa_2^0 B^*(k)\big)
+
\kappa_1^T\big(\kappa_1^0 B(k)-\kappa_2^0 A(k)\big),
\end{aligned}
\end{equation}
or, in the reverse order,
\begin{equation}\label{tilde-A--A_}
\begin{aligned}
    &A(k)
=\kappa_1^T\left(
\kappa_1^0\tilde A(k)
-\kappa_2^0\tilde B(k)\right)
+
e^{-\frac{iT}{2k}}\kappa_2^T
\left(\kappa_1^0\tilde B^*(k)+\kappa_2^0\tilde A^*(k)\right),\\
&B(k)
=\kappa_1^T\left(
\kappa_1^0\tilde B(k)
+\kappa_2^0\tilde A(k)\right)
+
e^{\frac{iT}{2k}} \kappa_2^T
\left(\kappa_2^0\tilde B^*(k)-\kappa_1^0\tilde A^*(k)\right).
\end{aligned}
\end{equation}

\subsubsection{The direct $x$-spectral problem
$\{ u_{0}(x) \} \mapsto \{ 
a(k), b(k),\tilde a(k),\tilde b(k) \}$}\label{subsec:2.1.4}

Given a function $u_{0}(x)\in H^{1,1}((0,\infty))+2\pi k$ (not necessarily arising as the initial value of a solution of the sG equation), we define the direct $x$-spectral mapping
\[
\{ u_{0x}(x) \} \longrightarrow \{ {a}(k), {b}(k) \}
\]
by considering 
  Equation \eqref{inteq_inf3} for $t=0$:
\begin{equation}\label{inteq_inf3_t_0}    
\Phi_{ 3}(x,0,k)=I-
\int_{x}^{+\infty}
	\eul^{\ii k (y-x)\hat\sigma_3}( U_\infty\Phi_{3})(y,0,k) \dd y,
\end{equation}
where $U_\infty$ is given by \eqref{Lax-U_infty} with $u_x(x,0)$ replaced by $u_{0x}(x)$.

Then ${a}(k)$ and ${b}(k)$ are defined by (cf. \eqref{s})
\[
\begin{pmatrix}
    {b}(k) \\ {a}(k)
\end{pmatrix} = \Phi_{\infty 3}^{(2)}(0,0,k)
\]
and the analysis of the Volterra integral equation \eqref{inteq_inf3_t_0} yields the following properties:
\begin{enumerate}
    \item $a(k)$ and $ b(k)$ are analytic in $\mathbb{C}^+$ and continuous up to the boundary except at $k=0$. Moreover, $ a(k)=1+O(\frac{1}{k})$ and $ b(k)=\frac{u_{0x}(0)}{4\ii k}+O(\frac{1}{k^2})$ as $k\to\infty$ in $\mathbb{C}^+$.

    \item Determinant relation:
        \begin{equation}\label{detrel_ab}
           a(k) a^*( k)+  b(k) b^*( k)=1,  \qquad k\in \mathbb R.
        \end{equation}

    \item Symmetries:

    \begin{equation}
        \label{sym_a_}
    \overline{a(\bar k)}=a(-k),\qquad
    \overline{b(\bar k)}=b(-k).
    \end{equation}
\end{enumerate}

Similarly, we define the direct $x$-spectral mapping
\[
\{\sin u_{0}(x),\cos u_{0}(x) \} \longrightarrow \{\tilde {a}(k),\tilde {b}(k) \}
\]
by considering Equation 
\eqref{inteq_03} for $t=0$:
\begin{equation}\label{inteq_03_t_0} 
\Phi_{0 3}(x,0,k)=I-
\int_{x}^{+\infty}
	\eul^{\ii k(y-x)\hat\sigma_3}( U_0\Phi_{03})(y,0,k) \dd y,\qquad x\ge 0
\end{equation}
where $U_0$ is given by \eqref{Lax-U_0} with $\sin u(x,0)$ and $\cos u(x,0)$ replaced by $\sin u_0(x)$ and $\cos u_0(x)$, respectively.
Then $\tilde{a}(k)$ and $\tilde{b}(k)$ defined by (cf. \eqref{tils})
\[
\begin{pmatrix}
    \tilde{b}(k) \\ \tilde{a}(k)
\end{pmatrix} = \Phi_{03}^{(2)}(0,0,k)
\]
have the following properties:

\begin{enumerate}
   
    \item $\tilde a(k)$ and $\tilde b(k)$ are analytic in $\mathbb{C}^+$ and continuous up to the boundary. Moreover, $\tilde a(k)=1+O(k)$ and $\tilde b(k)=O(k)$ as $k\to 0$  in $\mathbb{C}^+$.

    \item Determinant relation:
        \begin{equation}\label{detrel_tilab}
          \tilde a(k)\tilde a^*( k)+ \tilde b(k)\tilde b^*( k)=1, \qquad k\in \mathbb R;
        \end{equation}

    \item Symmetries: 
    \begin{equation}\label{sym_tila_}
    \overline{\tilde a(\bar k)}=\tilde a(-k),\qquad
    \overline{\tilde b(\bar k)}=\tilde b(-k).
\end{equation}

\end{enumerate}

Moreover, taking into account \eqref{tilde-a--a}, it follows that we can specify the behavior of $ a(k)$ and $ b(k)$ as $k\to 0$
   \begin{equation}\label{a_at_i}
       a(k)=\kappa_1^0+O(k), \qquad 
    b(k)=\kappa_2^0+O(k), \qquad k\in\mathbb{C}^+.
   \end{equation}

\begin{remark} The spectral functions $a(k)$ and $b(k)$ are determined solely by $u_{0x}(x)$ assuming that $u_{0x}(x)\in L^1(0,+\infty)$, whereas the spectral functions $\tilde a(k)$ and $\tilde b(k)$ are determined by $\sin u_0(x)$ and $\cos u_0(x)$. In particular, it is sufficient to know $u_0(x)$ modulo $2\pi$ in order to define the direct $x$-spectral problems.

Notice that the requirement $u_0(x)\in H^{2,2}((0,+\infty))+2\pi k$ in Problem I will provide additional properties of $a(k)$, $b(k)$,  $\tilde a(k)$, and $\tilde b(k)$ needed for a particular regularity of the RH problem with respect to its parameters (see Sections \ref{sec:6} and \ref{sec:7})

\end{remark}

\subsubsection{The direct $t$-spectral problem
$\{ v_{0}(t) \} \mapsto \{ 
A(k), B(k) \}$}\label{subsec:2.1.5}

Given a function $v_0(t)\in L^1_{loc}(0,T)$ we define the direct $t$-spectral mapping
\[
\{\sin v_0(t), \cos v_0(t) \} \longrightarrow \{ A(k), B(k) \}
\]
by considering Equation \eqref{inteq_inf1} for $x=0$
\begin{equation}\label{inteq_inf1_x_0}    
\Phi_{ 1}(0,t,k)=I-\int_{t}^{T}
	\eul^{ \frac{\tau-t}{4\ii k}\hat\sigma_3}( V_\infty\Phi_{1})(0,\tau,k) \dd \tau,
\end{equation}
where $V_\infty$ is given by \eqref{Lax-V_infty} with $\sin u(0,t)$ and $\cos u(0,t)$ replaced by $\sin v_0(t)$ and $\cos v_0(t)$, respectively. 

Then $A(k)$ and $B(k)$ are defined by (cf. \eqref{S})
\[
\begin{pmatrix}
    {A}(k) \\ B(k)
\end{pmatrix} = \Phi_{\infty 1}^{(2)}(0,0,k)
\]
and satisfy the following properties:

\begin{enumerate}
    \item $ A(k)$ and $ B(k)$ are analytic in $\mathbb{C}\setminus\{0\}$. Moreover, $ A(k)=1+O(\frac{1}{k})$ and $ B(k)=O(\frac{1}{k})$ as $k\to\infty$.

    \item Determinant relation:
        \begin{equation}\label{detrel_AB}
           A(k) A^*( k)+  B(k) B^*(k)=1  
        \end{equation}

    \item Symmetries:
    \begin{equation}\label{sym_A_}
    \overline{A(\bar k)}=A(-k),\qquad
    \overline{B(\bar k)}=B(-k).
\end{equation}

\end{enumerate}

Similarly, we define the direct $t$-spectral mapping
\[
\{ v_{0t}(t) \} \longrightarrow \{\tilde A(k),\tilde B(k) \}
\]
by considering 
\eqref{inteq_01} for $x=0$:
\begin{equation}\label{inteq_01_x_0}    
\Phi_{0 1}(0,t,k)=I-
\int_{t}^{T}
	\eul^{-\frac{\ii}{4k}(\tau-t)\hat\sigma_3}( V_0\Phi_{01})(0,\tau,k ) \dd \tau,
\end{equation}
where $V_0$ is given by \eqref{Lax-V_0} with $u_t(0,t)$ replaced by $v_{0t}(t)$.
Then $\tilde{A}(k)$ and $\tilde{B}(k)$ defined by (cf. \eqref{tils})
\[
\begin{pmatrix}
    \tilde{B}(k) \\ \tilde{A}(k)
\end{pmatrix} = \Phi_{01}^{(2)}(0,0,k)
\]
have the following properties:

\begin{enumerate} 
    \item $\tilde A(k)$ and $\tilde B(k)$ are analytic in $\mathbb{C}\setminus\{0\}$. Moreover, $\tilde A(k)=1+O(k)+O\left(k\eul^{-\frac{\ii T}{2k}}
\right)$ and $\tilde B(k)=O(k)+O\left(k\eul^{-\frac{\ii T}{2k}}\right)$ as $k\to 0$.

    \item Determinant relation:
        \begin{equation}\label{detrel_tilAB}
          \tilde A(k)\tilde A^*(k)+ \tilde B(k)\tilde B^*(k)=1  
        \end{equation}

    \item Symmetries:
    \begin{equation}\label{sym_tilA_}
    \overline{\tilde A(\bar k)}=\tilde A(-k),\qquad 
    \overline{\tilde B(\bar k)}=\tilde B(-k).
\end{equation}

    \item Behavior at $\infty$ (follows from \eqref{tilde-A--A}):

\begin{equation}   \label{tilS_at_infty}
   \tilde A(k)=\kappa_1^T\kappa_1^0+\kappa_2^T\kappa_2^0+O\left(\frac{1}{k}\right),\quad \tilde B(k)=-\kappa_1^T\kappa_2^0+\kappa_2^T\kappa_1^0+O\left(\frac{1}{k}\right),\quad k\to\infty,\quad k\in\mathbb{C}^+.
\end{equation}

\end{enumerate}

Furthermore,  \eqref{tilde-A--A_} allows us to specify the behavior of $A(k)$ and $B(k)$ as $k\to 0$, namely,
     \begin{equation}\label{A_B_at_0}
     A(k)=\kappa_1^T\kappa_1^0 + \kappa_2^T\kappa_2^0 \eul^{-\frac{\ii T}{2k}}+O(k)+O\left(k\eul^{-\frac{\ii T}{2k}}
\right), \qquad B(k)=\kappa_1^T\kappa_2^0 + \kappa_2^T\kappa_1^0 \eul^{-\frac{\ii T}{2k}}+O(k)+O\left(k\eul^{-\frac{\ii T}{2k}}
\right).
 \end{equation}

\subsection{Problem II}\label{subsec:2.2}

As in the case of Problem I, assuming that we are given a solution $u(x,t)$ of Problem II \eqref{pr_2}
such that $u(\cdot, t)\in \tilde H^{1,2}(-\infty,0)$
for all $t\in[0,T]$, we study the analytic properties of the associated eigenfunctions.

We define the eigenfunctions $\Phi$ as in \eqref{inteq_inf}. However, instead of $\Phi_{ 3}$ we introduce $\check \Phi_{3}$, for which the initial point
  $(x^*,t^*)$ is chosen to be $(-\infty,t)$. The integration paths are those shown in Figure \ref{fig:integration-paths_}.

\begin{figure}[ht]
\centering
\begin{tikzpicture}[scale=1]
  \def\x{2.2}  
  \def\t{1.3}  
  \def\T{2.6}  
  \def\W{3.2}  

  \begin{scope}
    \draw[->] (-\W,0) -- (0.5,0) node[below right] {$y$};
    \draw[->] (0,-0.5) -- (0,\W) node[above left] {$\tau$};

    \node[below left] at (0,0) {$0$};
    \node[below]      at (-\x,0) {$x$};
    \node[left]       at (0.5,\t) {$t$};

    \fill (0,\T) circle (1.2pt) node[left] {$T$};

    \draw[dashed] (-\x,0) -- (-\x,\t);

    \draw[very thick,postaction={decorate},
          decoration={markings, mark=at position 0.5 with {\arrow{latex}}}] 
          (0,\T) -- (0,\t);
    \draw[very thick,postaction={decorate},
          decoration={markings, mark=at position 0.5 with {\arrow{latex}}}] 
          (0,\t) -- (-\x,\t);

    \fill (-\x,\t) circle (1.4pt);
  \end{scope}

  \begin{scope}[xshift=5.2cm]
    \draw[->] (-\W,0) -- (0.5,0) node[below right] {$y$};
    \draw[->] (0,-0.5) -- (0,\W) node[above left] {$\tau$};

    \node[below left] at (0,0) {$0$};
    \node[below]      at (-\x,0) {$x$};
    \node[left]       at (0.5,\t) {$t$};

    \fill (0,0) circle (1.2pt);

    \draw[dashed] (-\x,0) -- (-\x,\t);

    \draw[very thick,postaction={decorate},
          decoration={markings, mark=at position 0.5 with {\arrow{latex}}}] 
          (0,0) -- (0,\t);
    \draw[very thick,postaction={decorate},
          decoration={markings, mark=at position 0.5 with {\arrow{latex}}}] 
          (0,\t) -- (-\x,\t);

    \fill (-\x,\t) circle (1.4pt);
  \end{scope}

  \begin{scope}[xshift=10.4cm]
    \draw[->] (-\W,0) -- (0.5,0) node[below right] {$y$};
    \draw[->] (0,-0.5) -- (0,\W) node[above left] {$\tau$};

    \node[below left] at (0,0) {$0$};
    \node[below]      at (-\x,0) {$x$};
    \node[left]       at (0,\t) {$t$};

    \draw[dashed] (-\x,0) -- (-\x,\t);
    \draw[dashed] (0,\t) -- (-\x,\t);

    \draw[very thick,postaction={decorate},
          decoration={markings, mark=at position 0.5 with {\arrow{latex}}}] 
          (-\W,\t) -- (-\x,\t);

    \fill (-\x,\t) circle (1.4pt);
  \end{scope}
\end{tikzpicture}
\caption{Paths of integration for $\Phi_{1}, \Phi_{2},$ and $\check\Phi_{3}$
($\Phi_{01}, \Phi_{02},$ and $\check\Phi_{03}$ ).}
\label{fig:integration-paths_}
\end{figure}

More precisely,
\begin{equation}\label{inteq_inf1_}
    \begin{aligned}
\Phi_{ 1}(x,t,\lambda)=I&-
\int_{x}^{0}
	\eul^{\ii k(y-x)\hat\sigma_3}( U_\infty\Phi_{1})(y,t,k) \dd y   -\eul^{-\ii kx\hat\sigma_3}
\int_{t}^{T}
	\eul^{ \frac{\tau-t}{4\ii k }\hat\sigma_3}( V_\infty\Phi_{1})(0,\tau,k) \dd \tau,
\end{aligned}
\end{equation}
    
\begin{equation}\label{inteq_inf2_}
    \begin{aligned}
\Phi_{ 2}(x,t,k)=I&-
\int_{x}^{0}
	\eul^{\ii k(y-x)\hat\sigma_3}( U_\infty\Phi_{2})(y,t,k) \dd y  +\eul^{-\ii kx\hat\sigma_3}
\int_{0}^{t}
	\eul^{ \frac{\tau-t}{4\ii k }\hat\sigma_3}( V_\infty\Phi_{2})(0,\tau,k) \dd \tau,
\end{aligned}
\end{equation}

    \begin{equation}\label{inteq_inf3_}
\check\Phi_{3}(x,t,k)=I+
\int^{x}_{-\infty}
	\eul^{-\ii k(x-y)\hat\sigma_3}( U_\infty\check\Phi_{3})(y,t,k) \dd y.
\end{equation}
They have the following properties:

\begin{enumerate}
    \item $\Phi_{ 1}^{(1)}(x,t,k)$ is analytic in $\mathbb{C}\setminus \{0\}$ and $\Phi_{ 1}^{(1)}(x,t,k)$=$\begin{pmatrix}
        1\\0
    \end{pmatrix}+O(\frac{1}{k})$ as $k\to\infty$ in $\mathbb{C}^-$. Moreover, $\Phi_{ 1}^{(1)}(0,t,k)$=$\begin{pmatrix}
        1\\0
    \end{pmatrix}+O(\frac{1}{k})$ as $k\to\infty$ in $\mathbb{C}$.

    \item $\Phi_{ 1}^{(2)}(x,t,k)$ is analytic in $\mathbb{C}\setminus \{0\}$ and $\Phi_{ 1}^{(2)}(x,t,k)$=$\begin{pmatrix}
        0\\1
    \end{pmatrix}+O(\frac{1}{k})$ as $k\to\infty$ in $\mathbb{C}^+$. Moreover, $\Phi_{ 1}^{(2)}(0,t,k)$=$\begin{pmatrix}
        0\\1
    \end{pmatrix}+O(\frac{1}{k})$ as $k\to\infty$ in $\mathbb{C}$.

     \item $\Phi_{ 2}^{(1)}(x,t,k)$ is analytic in $\mathbb{C}\setminus\{0\}$ and $\Phi_{\infty 2}^{(1)}(x,t,k)$=$\begin{pmatrix}
        1\\0
    \end{pmatrix}+O(\frac{1}{k})$ as $k\to\infty$ in $\mathbb{C}^-$.

    \item $\Phi_{ 2}^{(2)}(x,t,k)$ is analytic in $\mathbb{C}\setminus\{0\}$ and $\Phi_{ 2}^{(2)}(x,t,k)$=$\begin{pmatrix}
        0\\1
    \end{pmatrix}+O(\frac{1}{k})$ as $k\to\infty$ in $\mathbb{C}^+$.

      \item $\check\Phi_{3}^{(1)}(x,t,k)$ is analytic in $\mathbb{C}^+$ and continuous up to real line. Moreover, $\check\Phi_{3}^{(1)}(x,t,k)$=$\begin{pmatrix}
        1\\0
    \end{pmatrix}+O(\frac{1}{k})$ as $k\to\infty$ in $\mathbb{C}^+$.

    \item $\check\Phi_{3}^{(2)}(x,t,k)$ is analytic in $\mathbb{C}^-$ and continuous up to real line. Moreover,  $\check\Phi_{3}^{(2)}(x,t,k)$=$\begin{pmatrix}
        0\\1
    \end{pmatrix}+O(\frac{1}{k})$ as $k\to\infty$ in $\mathbb{C}^-$.       
\end{enumerate}

Similarly, we define the eigenfunctions $\Phi_0$ as the solutions of the following integral equations:

\begin{equation}\label{inteq_01_}
        \begin{aligned}
\Phi_{0 1}(x,t,k)=I-
\int_{x}^{0}
	\eul^{-\ii k(x-y)\hat\sigma_3}( U_0\Phi_{01})(y,t,k) \dd y   -\eul^{-\ii k x\hat\sigma_3}
\int_{t}^{T}
	\eul^{\frac{i}{4 k}(t-\tau)\hat\sigma_3}( V_0\Phi_{01})(0,\tau,k) \dd \tau,
\end{aligned}
\end{equation}

  \begin{equation} \label{inteq_02_}
    \begin{aligned}
\Phi_{02}(x,t,k)=I-
\int_{x}^{0}
	\eul^{-\ii k(x-y)\hat\sigma_3}( U_0\Phi_{02})(y,t,k) \dd y   +\eul^{-\ii k x\hat\sigma_3}
\int_{0}^{t}
	\eul^{\frac{i}{4 k}(t-\tau)\hat\sigma_3}( V_0\Phi_{02})(0,\tau,k) \dd \tau,
\end{aligned}
\end{equation}

    \begin{equation}\label{inteq_03_}
\check\Phi_{03}(x,t,k)=I+
\int_{-\infty}^{x}
	\eul^{-\ii k (x-y)\hat\sigma_3}( U_0\check\Phi_{03})(y,t,k) \dd y,
    \end{equation}
and using Neumann series, we obtain the following properties of $\Phi_{0 i}^{(j)}(x,t,k)$:

\begin{enumerate}
    \item $\Phi_{0 1}^{(1)}(x,t,k)$ is analytic in $\mathbb{C}\setminus \{0\}$ and $\Phi_{01}^{(1)}(x,t,k)=\begin{pmatrix}
        1\\0
    \end{pmatrix}+O(k)$ as $k\to 0$ in $\mathbb{C}^-$.

    \item $\Phi_{0 1}^{(2)}(x,t,k)$ is analytic in $\mathbb{C}\setminus \{0\}$ and  $\Phi_{0 1}^{(2)}(x,t,k)=\begin{pmatrix}
        0\\1
    \end{pmatrix}+O(k)$ as $k\to 0$ in $\mathbb{C}^+$.

     \item $\Phi_{0 2}^{(1)}(x,t,k)$ is analytic in $\mathbb{C}\setminus\{0\}$ and  $\Phi_{0 2}^{(1)}(x,t,k)=\begin{pmatrix}
        1\\0
    \end{pmatrix}+O(k)$ as $k\to0$ in $\mathbb{C}^+$.

    \item $\Phi_{0 2}^{(2)}(x,t,k)$ is analytic in $\mathbb{C}\setminus\{0\}$ and $\Phi_{0 2}^{(2)}(x,t,k)=\begin{pmatrix}
        0\\1
    \end{pmatrix}+O(k)$ as  $k\to0$ in $\mathbb{C}^-$.

      \item $\check\Phi_{0 3}^{(1)}(x,t,k)$ is analytic in $\mathbb{C}^+$  and continuous up to the boundary.
 Moreover, $\check\Phi_{0 3}^{(1)}(x,t,k)$=$\begin{pmatrix}
        1\\0
    \end{pmatrix}+O(k)$ as $k\to 0$ in $\mathbb{C}^+$.

    \item $\check\Phi_{0 3}^{(2)}(x,t,k)$ is analytic in $\mathbb{C}^-$  and continuous up to the boundary.
 Moreover, $\check\Phi_{03}^{(2)}(x,t,k)=\begin{pmatrix}
        0\\1
    \end{pmatrix}+O(k)$ as $k\to0$ in $\mathbb{C}^-$.
\end{enumerate}
The  properties \eqref{jost_det} and \eqref{sym-Phi} hold as well for $\Phi_{j}(x,t,k)$, $\Phi_{0 j}(x,t,k)$ for $j=1,2$, $\check\Phi_{3}(x,t,k)$ and $\check\Phi_{0 3}(x,t,k)$ .

We define the matrices $s(k)$, $S(k)$, $\tilde s(k)$, and $\tilde S(k)$ as in \eqref{s}, \eqref{S}, \eqref{tils} and \eqref{tilS}, respectively, with $\Phi_{\infty 3}$ ($\Phi_{0 3}$) replaced by  $\check\Phi_{3}$ ($\check\Phi_{0 3}$).

For Problem II, the spectral functions $a(k)$,  $b(k)$, $\tilde a(k)$, and $\tilde b(k)$ possess the following properties:
\begin{enumerate}[(i)]
    \item $\tilde a(k)$ and $\tilde b(k)$ are analytic in $\mathbb{C}^-$. Moreover, $\tilde a(0)=1$ and $\tilde b(0)=0$.

        \item $a(k)$ and $ b(k)$ are analytic in $\mathbb{C}^-$. Moreover, $ a(k)=1+O(\frac{1}{k})$ and $ b(k)=O(\frac{1}{k})$ as $k\to\infty$. Moreover,
   $ a(0)=\kappa_1^0$ and $ b(0)=\kappa_2^0$.
\end{enumerate}
The relations \eqref{detrel_ab}, \eqref{detrel_tilab}, \eqref{sym_a_}, and \eqref{sym_tila_} continue to hold, and the properties of 
$A(k)$ and 
$B(k)$ remain unchanged.

\section{Compatibility of initial and boundary values}\label{sec:3}

In this section, we discuss the relations among the spectral functions. 

\subsection{Problem I} First, we express $\Phi_{3}(0,T,k)$ in terms of $\Phi_{1}(0,T,k)$ using \eqref{rel_inf} and recall that $\Phi_{1}(0,T,k)=I$:
    \begin{equation}\label{for_GR}
        \Phi_{3}(0,T,k)=\eul^{-\ii kp(0,T,k)\sigma_3}S^{-1}(k)s(k)\eul^{\ii kp(0,T,k)\sigma_3}.
    \end{equation}
Taking into account that $\Phi_{3}(0,T,k)$ can be estimated from
\begin{equation*}
  \Phi_{3}(0,T,k)=I-
\int_{0}^{+\infty}
	\eul^{\ii ky\hat\sigma_3}( U_\infty\Phi_{3})(y,T,k) \dd y, 
\end{equation*}
we conclude that
$ \Phi^{(12)}_{\infty3}(0,T,k)=(\frac{1}{k})$ as $k\to\infty$ in $\mathbb{C}^+$.
Then, considering the $(12)$ entry  in \eqref{for_GR}, we arrive at the following asymptotic relationship among the spectral functions:
\begin{equation}\label{relations_Phi_inf3}
A(k)b(k)-B(k)a(k)=O(\frac{1}{k}),\quad k\to\infty, \quad k\in\mathbb{C}^+. 
   \end{equation}

Relationships of this type (known as  ``Global Relations'') arise within analyzing IBV problems by using the RH method, and often they are restrictions
on the spectral functions
that reflect the dependence between initial and boundary problems (see \cites{fokas2008unified,BS08,FIS}).
   With this respect, we notice that the relation \eqref{relations_Phi_inf3} provides no additional restriction on the behavior of the corresponding spectral functions, since it follows from the properties of $a(k)$, $b(k)$,  $A(k)$, and $B(k)$ listed in Section \ref{sec:2}.

Similarly, the relations  \eqref{rel_0} yield
    \begin{equation}\label{for_GR_2}
        \Phi_{03}(0,T,k)=\eul^{-\ii k p(0,T,k)\sigma_3}\tilde S^{-1}(k)\tilde s(k)\eul^{\ii kp(0,T,k)\sigma_3}.
    \end{equation}
In particular, its $(12)$ entry takes the form
\begin{equation*}
    \Phi^{(12)}_{03}(0,T,k)=(\tilde A(k)\tilde b(k)-\tilde B(k)\tilde a(k))\eul^{\frac{\ii}{2k}T},
\end{equation*}
and, recalling that
 $ \Phi^{(12)}_{03}(0,T,k)=O(k)$ as $k\to 0$, we arrive at the following asymptotic relation:
\begin{equation}\label{relations_Phi_03_i}
      (\tilde A(k)\tilde b(k)-\tilde B(k)\tilde a(k))\eul^{\frac{\ii}{2k}T}=O(k),\quad k\to0 ,\quad k\in\mathbb{C}^+.
    \end{equation}

 In contrast with \eqref{relations_Phi_inf3}, the asymptotic formula \eqref{relations_Phi_03_i} does present a restriction on the involved spectral functions, which reflects the compatibility of the initial and boundary data in the spectral terms.

\subsection{Problem II} By arguments analogous to those used for Problem I, we obtain
\begin{equation}\label{relations_Phi_inf3__}
(A(k)b(k)-B(k)a(k))=O(\frac{1}{k}),\quad k\to\infty, \quad k\in\mathbb{C}^- 
   \end{equation}
   and
\begin{equation}\label{relations_Phi_03_i__}
      (\tilde A(k)\tilde b(k)-\tilde B(k)\tilde a(k))\eul^{\frac{\ii}{2k}T}=O(k),\quad k\to0 ,\quad k\in\mathbb{C}^-.
    \end{equation}
In this case, \eqref{relations_Phi_inf3__} and \eqref{relations_Phi_03_i__} present any additional restrictions on the spectral functions: they follow from the properties 
 of $a(k)$, $b(k)$, $\tilde a(k)$, $\tilde b(k)$,  $A(k)$, $B(k)$, $\tilde A(k)$, and $\tilde B(k)$ listed in Section \ref{sec:2}.

\section{Inverse Spectral Mappings}\label{sec:4}

In this section, the inverse spectral mappings are formulated through the solutions of  associated Riemann–Hilbert problems, with jump matrices determined by the corresponding spectral functions.

Let $\epsilon>0$ be sufficiently small so that all 
zeros of $\tilde a(k)$, $\tilde a^*(k)$, $\tilde A(k)$, and $\tilde A^*(k)$ appearing in the denominators below lie outside the disks $\{|k|\leq\epsilon\}$.

\subsection{Problem I}\label{subsec:4.1}

\subsubsection{The inverse $x$-spectral mapping}\label{subsec:4.1.1}

The  inverse
 $x$-spectral mapping
 \[ \{ a(k),b(k) \}\longrightarrow \{ u_0(x) \}\]
is based on solving a Riemann–Hilbert problem whose jump matrix is built from the given spectral functions  $a$ and $b$. The construction of this problem is suggested by the properties of  $\Phi_{2}(x,0,k)$, $\Phi_{3}(x,0,k)$, $\Psi_{02}(x,0,k)$ and $\Psi_{03}(x,0,k)$ described in the direct spectral analysis.

first, notice that $\kappa_1^0$, $\kappa_2^0$ can be obtained by expanding $a(k)$ and $b(k)$ as $k\to 0$. Therefore, having $a(k)$ and $b(k)$, we can determine $\tilde a(k)$ and $\tilde b(k)$ via \eqref{s_via_til_s}.

\begin{figure}[ht]
    \centering

\begin{tikzpicture}[scale=2]

\draw[red] (0,0) circle (1);
\draw[thick] (-2,0) -- (2,0);

\draw[thick, postaction={decorate},
      decoration={markings, mark=at position 0.5 with {\arrow[scale=2]{>}}}]
      (-2,0) -- (-1,0);

\draw[thick, postaction={decorate},
      decoration={markings, mark=at position 0.7 with {\arrow[scale=2]{<}}}]
      (-1,0) -- (0,0);  

\draw[thick, postaction={decorate},
      decoration={markings, mark=at position 0.5 with {\arrow[scale=2]{>}}}]
      (1,0) -- (2,0);

\draw[thick, red, postaction={decorate},
      decoration={markings, mark=at position 0.25 with {\arrow[scale=2]{<}}}] 
      (0,0) circle (1);

\draw[thick, red, postaction={decorate},
      decoration={markings, mark=at position 0.75 with {\arrow[scale=2]{>}}}] 
      (0,0) circle (1);

\node at (0,0.1) {\textcolor{red}{$0$}};
\fill[red] (0,0) circle(0.02);

\node at (1.1,0.1) {\textcolor{blue}{$\epsilon$}};
\fill[blue] (1,0) circle(0.02);

\end{tikzpicture}

    \caption{The oriented contour for the Riemann--Hilbert problems \textbf{RH$^{(x)}$}, \textbf{RH$^{(t)}$}, and \textbf{RH$^{(xt)}$}}
    \label{fig:contour_RH_x}
\end{figure}

Given $u_{0}(x)$, consider the  matrix-valued function $M^{(x)}(x,k)$ defined in the domains separated by the contour shown in Figure \ref{fig:contour_RH_x}:

\begin{equation}\label{M_(x)}
M^{(x)}(x,k)=\begin{cases}

\left( \frac{\Phi_{ 2}^{(1)}(x,0,k)}{a(k)},\Phi_{ 3}^{(2)}(x,0,k)\right),\quad k\in\mathbb{C}^+\cap\{|k| >\epsilon\},\\

\left( \frac{\Psi_{0 2}^{(1)}(x,0,k)}{\tilde a(k)},\Psi_{0 3}^{(2)}(x,0,k)\right),\quad k\in\mathbb{C}^+\cap\{|k| <\epsilon\},\\

\left( \Phi_{ 3}^{(1)}(x,0,k),\frac{\Phi_{ 2}^{(2)}(x,0,k)}{a^*( k)}\right),\quad k\in\mathbb{C}^-\cap\{|k| >\epsilon\},\\

\left( \Psi_{0 3}^{(1)}(x,0,k),\frac{\Psi_{0 2}^{(2)}(x,0,k)}{\tilde a^*( k)}\right),\quad k\in\mathbb{C}^-\cap\{|k| <\epsilon\},
\end{cases}
\end{equation}
where the functions $\Psi_{0j}$
are defined by 
\begin{equation}\label{psi_0}
    \Psi_{0j}(x,t,k)\coloneqq P^{-1}(x,t)\Phi_{0j}(x,t,k). 
\end{equation}

Here, the functions $\Phi_{2}(x,0,k)$, $\Phi_{3}(x,0,k)$, $\Phi_{02}(x,0,k)$, $\Phi_{03}(x,0,k)$ are to be understood as defined by \eqref{inteq_inf2}, \eqref{inteq_inf3}, \eqref{inteq_02}, and \eqref{inteq_03},respectively,  for $t=0$, where $u(x,0)$ in $U_\infty(x,0)$ and $U_0(x,0)$ is replaced by $u_0(x)$.

Notice the following properties of $M^{(x)}(x,k)$:

\begin{enumerate}
    \item Jump relation across $\mathbb{R}\cup\{|k|=\epsilon\}$:
    \begin{subequations}
        \label{jump_M_(x)}
        \begin{equation}
           M_-^{(x)}(x,k)=M_+^{(x)}(x,k)J(x,k),\quad k\in \mathbb{R}\cup\{|k|=\epsilon\},
        \end{equation}
        where
         \begin{equation}
          J(x,k)=\eul^{-\ii k x\sigma_3} J_0(k)\eul^{\ii k x\sigma_3}
        \end{equation} 
         with
\begin{equation}\label{J0}
   J_0(k)=\begin{cases}
       \begin{pmatrix}
       1&-r^*(k)\\-r(k)&1+r(k)r^*(k)
   \end{pmatrix},\quad k\in\mathbb{R}\cap\{|k| >\epsilon\},\\
   \begin{pmatrix}
       1+\tilde r(k)\tilde r^*(k)&\tilde r^*(k)\\\tilde r(k)&1
   \end{pmatrix},\quad k\in\mathbb{R}\cap\{|k| <\epsilon\},\\
   \begin{pmatrix}
       1&0\\
       -\frac{\kappa_2^0}{a(k)\tilde a(k)}&1
   \end{pmatrix},\quad k\in\mathbb{C}^+\cap\{|k| =\epsilon\},\\
   \begin{pmatrix}
       1&-\frac{\kappa_2^0}{a^*(k)\tilde a^*(k)}\\
       0&1
   \end{pmatrix},\quad k\in\mathbb{C}^-\cap\{|k| =\epsilon\},
   \end{cases} 
\end{equation}
where $r(k)=\frac{b^*(k)}{a(k)}$ and $\tilde r(k)=\frac{\tilde b^*(k)}{\tilde a(k)}$. 
    \end{subequations}

\item Behavior at $\infty$:
\begin{subequations}\label{inf_M_(x)}
\begin{equation}\label{inf_M_exp(x)}
     M^{(x)}(x,k)=I+\frac{1}{4\ii k}M^{\infty}(x)+O(\frac{1}{k^2}),\quad k\to\infty
\end{equation}
with
\begin{equation}\label{inf_M_inf(x)}
M^{\infty}(x)=\begin{pmatrix}
    -\frac{1}{2}\int_x^\infty
u_{0x}^2(y)\dd y&u_{0x}(x)\\
    u_{0x}(x)&\frac{1}{2}\int_x^\infty
u_{0x}^2(y)\dd y\end{pmatrix}.
\end{equation}
\end{subequations}
\begin{proof} First, consider $k\in\mathbb{C}^+$.
Expanding $\Phi_{2}^{(1)}(x,0, k)$ as $ k\to\infty$ via the Neumann series and using integration by parts yields
\[
\Phi_{2}^{(1)}(x,0, k)=\begin{pmatrix}
    1\\0
\end{pmatrix}+\frac{1}{4\ii k}\begin{pmatrix}
    \frac{1}{2}\int_0^x
u_{0x}^2(y)\dd y\\u_{0x}(x)
\end{pmatrix}+O(\frac{1}{k^2}).
\]
Similarly, expanding $\Phi_{03}^{(2)}(x,0, k)$ as $ k\to \infty$, we obtain
\[
\Phi_{3}^{(2)}(x,0, k)=\begin{pmatrix}
    0\\1
\end{pmatrix}+\frac{1}{4\ii k}\begin{pmatrix}u_{0x}(x)
    \\\frac{1}{2}\int_x^\infty
u_{0x}^2(y)\dd y
\end{pmatrix}+O(\frac{1}{k^2}),
\]
and, in particular,
\[
a(k)=1+\frac{1}{8\ii k}\int_0^\infty
u_{0x}^2(y)\dd y+O(\frac{1}{k^2})
\]
Therefore, \eqref{inf_M_(x)} follows for $k\in\mathbb{C}^+$.
For $k\in\mathbb{C}^-$, this follows expanding $\Phi_{03}^{(1)}(x,0, k)$ and $\Phi_{2}^{(2)}(x,0, k)$. 
\end{proof}

\item $\det M^{(x)}(x,k)\equiv1$.

\item Symmetry properties:
\begin{equation}\label{sym-M_(x)}
M^{(x)}(- k)=\overline{M^{(x)}(\bar k)},\qquad M^{(x)}(-k)=\sigma_2M^{(x)}(k)\sigma_2.
\end{equation}

\item Residue properties. Let $\{k_j\}$ be zeros of $a(k)$ in $\mathbb{C}^+$. Assume that they are all simple. Then
\begin{subequations}\label{res-M_(x)}
\begin{align}\label{res-M+_(x)}
\Res_{k_j}M^{(x)(1)}(x,k)&=\frac{e^{2\ii k_j x}}{\dot a(k_j)b(k_j)}M^{(x)(2)}(x,k_j),\\
\label{res-M-_(x)}
\Res_{- k_j}M^{(x)(2)}(x,k)&=\frac{e^{2\ii k_j x}}{\dot a( k_j)b( k_j)}M^{(x)(1)}(x,- k_j)
\end{align}
\end{subequations}
 due to the symmetry  \eqref{sym_a_}.

\item Behavior at $0$:
\begin{equation}\label{i_beh-M_(x)}
M^{(x)}(x,k)=P_0^{-1}(x)+O(k), \quad k\to 0, 
\end{equation}
where 
\[
P_0(x)=\begin{pmatrix}
\cos \frac{u_0}{2} &\sin \frac{u_0}{2}  \\
-\sin \frac{u_0}{2} & \cos \frac{u_0}{2} \\
\end{pmatrix}.
\]

\end{enumerate}

\begin{remark}\label{rem:x-RH_reduced}
    In the case 
    $u_{0}(0)=4\pi k$, the 
    definition of $M^{(x)}(x,k)$ and the corresponding 
    jump conditions simplify,
    in view of \eqref{Phi_0_inf_reduced}, to the following:
     \begin{equation*}
           M_-^{(x)}(x,k)=M_+^{(x)}(x,k)J(x,k),\quad k\in \mathbb{R} ,
        \end{equation*}
        where
\[
M^{(x)}(x,k)=\begin{cases}
\left( \frac{\Phi_{\infty 2}^{(1)}(x,0,k)}{a(k)},\Phi_{\infty 3}^{(2)}(x,0,k)\right),\quad k\in\mathbb{C}^+,\\
\left( \Phi_{\infty 3}^{(1)}(x,0,k),\frac{\Phi_{\infty 2}^{(2)}(x,0,k)}{a^*( k)}\right),\quad k\in\mathbb{C}^-,
\end{cases}
\]
\begin{equation*}
          J(x,t,k)=\eul^{-\ii kx\sigma_3} J_0(k)\eul^{\ii kx\sigma_3}
        \end{equation*} 
        with  
\begin{equation*}
   J_0( k)=
      \begin{pmatrix}
       1&-r^*(k)\\-r(k)&1+r(k)r^*(k)
   \end{pmatrix},\quad k\in\mathbb{R}.
\end{equation*}
Here $\mathbb{R}$ is oriented from left to right.
    
\end{remark}

Now we notice that the properties of $M^{(x)}$ stated above suggest that $M^{(x)}(x,k)$  can be 
uniquely characterized 
as a solution of the RH problem (parametrized by $x$).

\textbf{The Riemann--Hilbert problem RH$^{(x)}$ for $M^{(x)}(x, k)$:}
Given $a( k)$, $b( k)$ for $ k\in\mathbb{C}^+$ and the set $\{k_j\}$ in $\mathbb{C}^+$, find a  piece-wise meromorphic $2\times 2$ matrix valued function $M^{(x)}(x, k)$ that satisfies the following conditions:

\begin{enumerate}
    \item Jump condition \eqref{jump_M_(x)}.

\item Behavior at $\infty$:
\begin{equation}\label{inf_hatM_(x)}
    M^{(x)}(x, k)=I+O\left(\frac{1}{ k}\right),\quad k\to\infty.
\end{equation}

\item Residue conditions \eqref{res-M_(x)}.
\end{enumerate}

\begin{proposition}\label{prop:Mx}
If a solution $M^{(x)}$ of \textbf{RH$^{(x)}$} exists, then:
    \begin{enumerate}
    \item it is unique;
    
        \item $\det  M^{(x)}\equiv1$;

        \item 
        \begin{equation}\label{sym-hatM_(x)}
 M^{(x)}( k)=\overline{ M^{(x)}(-\bar k)},\qquad  M^{(x)}(k)=\sigma_2 \overline{ M^{(x)}(\bar k)}\sigma_2.
\end{equation}

    \end{enumerate}
\end{proposition}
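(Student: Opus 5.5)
The argument is the standard Liouville-type one, applied in a fixed order: first the determinant, then uniqueness, then the symmetries via uniqueness. For $\det M^{(x)}\equiv1$, consider $d(k):=\det M^{(x)}(x,k)$. Every jump matrix in \eqref{J0} has unit determinant: on $\mathbb{R}\cap\{|k|>\epsilon\}$ we have $1+rr^*-r r^*=1$, similarly for the $\tilde r$ block, and the matrices on the circle are triangular with unit diagonal. Conjugation by $\eul^{\ii kx\sigma_3}$ does not change this. Hence $d_-=d_+$ across $\mathbb{R}\cup\{|k|=\epsilon\}$.

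At the poles $k_j$, only the first column has a pole, and its residue is proportional to the second column evaluated at $k_j$ by \eqref{res-M+_(x)}. Thus near $k_j$,
\[
M^{(x)(1)}=\frac{c_j M^{(x)(2)}(k_j)}{k-k_j}+\text{regular},
\]
and the singular part of the determinant is $\frac{c_j}{k-k_j}\det\bigl(M^{(x)(2)}(k_j),M^{(x)(2)}(k)\bigr)$, which vanishes at $k=k_j$. So $d$ is regular at $k_j$, and the same holds at $-\bar k_j$ by \eqref{res-M-_(x)}. I will also check the point $k=0$ and the points $\pm\epsilon$ where the contours meet. Here I assume the solution is understood in a class with at most mild (e.g. $L^2$-type) local behaviour, so these singularities are removable for a bounded-at-$\infty$ function. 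Then $d$ is entire, $d\to1$ at $\infty$ by \eqref{inf_hatM_(x)}, and Liouville gives $d\equiv1$.

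For uniqueness, let $M_1,M_2$ be two solutions and set $N:=M_1M_2^{-1}$. Since $\det M_2=1$, the inverse $M_2^{-1}$ is the adjugate and so is piecewise meromorphic. Across the contour, $N_-=M_{1+}JJ^{-1}M_{2+}^{-1}=N_+$, so $N$ has no jump. At $k_j$ the expected obstacle is that both factors have poles. I will handle this by writing $M_i=\hat M_i\bigl(\begin{smallmatrix}1&0\\ \frac{c_j}{k-k_j}&1\end{smallmatrix}\bigr)$ locally, with $\hat M_i$ analytic near $k_j$. To see this, note that right multiplication by this triangular matrix adds $\frac{c_j}{k-k_j}$ times the second column to the first, and the residue condition says precisely that $M_i^{(1)}-\frac{c_j}{k-k_j}M_i^{(2)}$ is analytic at $k_j$. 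The triangular factors then cancel in $M_1M_2^{-1}=\hat M_1\hat M_2^{-1}$, so $N$ is analytic at $k_j$, and symmetrically at $-\bar k_j$. Hence $N$ is entire with $N\to I$, so $N\equiv I$.

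For the symmetries, define $\hat M(k):=\overline{M^{(x)}(-\bar k)}$ and $\check M(k):=\sigma_2\overline{M^{(x)}(\bar k)}\sigma_2$. I will verify that each solves \textbf{RH$^{(x)}$}, and uniqueness then forces equality. Normalization at $\infty$ is immediate. The main computational step is the jumps. The map $k\mapsto-\bar k$ preserves $\mathbb{C}^\pm$ and the circle, but reverses the orientation of $\mathbb{R}$ and of the circle halves. The map $k\mapsto\bar k$ swaps $\mathbb{C}^\pm$ and so interchanges $\pm$ boundary values. One then checks that the resulting jump is again $J$; this uses $J_0^{-1}$ together with the identities \eqref{sym_a_} and \eqref{sym_tila_}, i.e. $r(-k)=\overline{r(\bar k)}$, and the $\sigma_2$-conjugation identity $\sigma_2\overline{J_0(\bar k)}\sigma_2=J_0(k)^{-1}$ adjusted for orientation. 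The circle jumps need the same care with $\kappa_2^0$ real. Finally, the residue conditions are transported by the same symmetries, since the pole set $\{k_j\}\cup\{-\bar k_j\}$ is invariant under $k\mapsto-\bar k$ by \eqref{sym_a_}. I expect this orientation bookkeeping to be the only delicate point.
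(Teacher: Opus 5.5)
Your proposal is correct and follows the paper's own route. The determinant comes from Liouville, uniqueness from Liouville applied to $M_1^{(x)}(M_2^{(x)})^{-1}$, and the symmetries from the symmetries of the jump data plus uniqueness; your residue cancellation, local triangular factorization at the poles, and caveat about $k=0,\pm\epsilon$ supply details the paper leaves implicit. One small slip: the second family of poles, governed by \eqref{res-M-_(x)}, sits at $-k_j\in\mathbb{C}^-$ (a pole of the second column, handled with the upper-triangular factor $\bigl(\begin{smallmatrix}1&c_j/(k+k_j)\\0&1\end{smallmatrix}\bigr)$), not at $-\bar k_j$, which lies in $\mathbb{C}^+$ and is already one of the $k_l$.
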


\begin{proof} \noindent (2) The conditions for $ M^{(x)}$ ensure that $\det M^{(x)}$ has neither a jump across $\mathbb{R}\cup\{|k|=\epsilon\}$ no singularities at $k_j$. As $\det M^{(x)}$ tends to $1$ as $k \to\infty$, Liouville’s theorem implies that $\det  M^{(x)}\equiv1$.

\noindent (1) The uniqueness of the solution follows by applying the Liouville type arguments to the ratio $ M_1^{(x)}( M_2^{(x)})^{-1}$ of two potential solutions, $ M_1^{(x)}$ and $ M_2^{(x)}$.

\noindent (3) The symmetries are inherited from the corresponding symmetries of the jump matrix and the uniqueness of the solution.

\end{proof}

The uniqueness of the solution to the Riemann–Hilbert problem \textbf{RH$^{(x)}$}, together with the properties of $ M^{(x)}(x,k)$, yields the following procedure for the inverse mapping
\[
    \{a(k),\, b(k)\} \longrightarrow \{u_0(x)\}
\]
for the $x$-problem:

\begin{enumerate}[Step 1.]
    \item Given $a(k)$ and $b(k)$, compute $\kappa_1^0$ and $\kappa_2^0$ through \eqref{a_at_i}, and determine $\tilde a(k)$ and $\tilde b(k)$ through \eqref{s_via_til_s}. With these data, formulate the Riemann–Hilbert problem \textbf{RH$^{(x)}$};

    \item Solve the RH problem constructed in the previous step and evaluate its solution $  M^{(x)}(x,k)$  at $ k=\infty$ (cf. \eqref{inf_M_(x)}):

    \begin{equation*}
       M^{(x)}(x, k)=I+\frac{1}{4\ii k}\begin{pmatrix}
           -\eta(x) & \xi(x)\\
           \xi(x) & \eta(x) 
       \end{pmatrix}+O\left(\frac{1}{k^2}\right).
    \end{equation*}

    \item Determine $u_{0x}(x)$  as follows:
    
\begin{equation}\label{m_0_via_RH_x}
    u_{0x}(x)=\xi(x). 
    \end{equation}

    \item  Having $u_{0x}(x)$, reconstruct $u_0(x)$ by
\begin{equation}\label{Fund_u0}
   u_0(x) = -\int_x^\infty u_{0x}(y)\dd y +2\pi k
\end{equation}

\end{enumerate}

\begin{remark}\label{rem:rec_u0}
    Alternatively, we can use \eqref{i_beh-M_(x)} in order to reconstruct $u_0(x)$.
More precisely, evaluate the solution $ M^{(x)}(x, k)$ of this RH problem at $ k=0$
\begin{equation}
      M^{(x)}(x, k)   =\begin{pmatrix}
            \alpha(x)& \beta(x)\\
           - \beta(x)& \alpha(x)
       \end{pmatrix}+O(k), 
    \end{equation}
and define $\sin \tfrac{u_0(x)}{2}$ and $\cos \tfrac{u_0(x)}{2}$ by
\[
\sin \tfrac{u_0(x)}{2}=\beta(x),\qquad \cos \tfrac{u_0(x)}{2}=\alpha(x).
\]
Then
\[
\sin u_0(x)=2\alpha(x)\beta(x),\qquad \cos u_0(x)=\alpha^2(x)-\beta^2(x)
\]
and
\[
u_{0x}=\cos u_0 (\sin u_0)_x-\sin u_0(\cos u_0)_x=-2\alpha\beta(\alpha+\beta)(\alpha_x+\beta_x).
\]

\end{remark}

\subsubsection{The Inverse $t$-Spectral Mapping}

The inverse $t$-spectral mapping
 \[ \{ A(k), B(k) \}\longrightarrow \{ v_0(t)  \}\]
can be expressed in terms of the solution of a Riemann–Hilbert problem with a jump matrix determined by $ A$ and $ B$. This RH problem is constructed using the relations among the eigenfunctions $\Phi_{ 1}(0,t,k)$, $\Phi_{ 2}(0,t, k)$, $\Psi_{01}(0,t, k)$, and $\Psi_{02}(0,t, k)$ arising in the direct spectral analysis.

Notice that having $A( k)$ and $ B( k)$, we can obtain $\kappa_i^0$, and $\kappa_i^T$, $i=1,2$, and determine $\tilde A( k)$ and $\tilde B(k)$ (see \eqref{A_B_at_0} and \eqref{tilde-A--A}).

We introduce the matrix-valued function $M^{(t)}(t,k)$ defined in the domains separated by the contour shown in Figure \ref{fig:contour_RH_x}:

\begin{equation}\label{M_(t)}
M^{(t)}(t, k)=\begin{cases}

\left( \Phi_{2}^{(1)}(0,t, k),\frac{\Phi_{ 1}^{(2)}(0,t, k)}{A( k)}\right),\quad  k\in\mathbb{C}^+\cap\{| k| >\epsilon\},\\

\left(\Psi_{0 2}^{(1)}(0,t, k),\frac{\Psi_{0 1}^{(2)}(0,t, k)}{\tilde A( k)}\right),\quad  k\in\mathbb{C}^+\cap\{| k| <\epsilon\},\\

\left(\frac{\Phi_{ 1}^{(1)}(0,t, k)}{A^*( k)},\Phi_{ 2}^{(2)}(0,t, k)\right),\quad  k\in\mathbb{C}^-\cap\{| k| >\epsilon\}\\

\left(\frac{\Psi_{0 1}^{(1)}(0,t, k)}{\tilde A^*( k)},\Psi_{0 2}^{(2)}(0,t, k)\right),\quad  k\in\mathbb{C}^-\cap\{| k| <\epsilon\},
\end{cases}
\end{equation}
where the functions $\Psi_{0j}$
are defined by \eqref{psi_0}.

Then the function $M^{(t)}(t, k)$ has the following properties:

\begin{enumerate}
    \item  Jump relation across $\mathbb{R}\cup\{|k|=\epsilon\}$
    \begin{subequations}
        \label{jump_M_(t)}
        \begin{equation}
           M_-^{(t)}(t, k)=M_+^{(t)}(t, k)J^{(t)}(t, k),\quad k\in\mathbb{R}\cup\{|k|=\epsilon\}
        \end{equation}
        where
         \begin{equation}
          J^{(t)}(t, k)=\eul^{\tfrac{\ii t}{4 k}\sigma_3} J^{(t)}_0( k)\eul^{-\tfrac{\ii t}{4 k}\sigma_3}
        \end{equation} 
        with 
\begin{equation}\label{J_0_jump_M_(t)}
   J^{(t)}_0( k)=\begin{cases}
       \begin{pmatrix}
          1+R( k)R^*( k)&-R( k)\\
          -R^*( k)&1
       \end{pmatrix},\quad  k\in\mathbb{R}\cap\{| k|>\epsilon\},\\
       \begin{pmatrix}
          1&\tilde R( k)\\
        \tilde R^*( k)&1+\tilde R( k)\tilde R^*( k)
       \end{pmatrix},\quad  k\in\mathbb{R}\cap\{| k|<\epsilon\},\\
       
       \begin{pmatrix}
        \kappa_1^0+R(k)\kappa_2^0&\tilde R(k)\kappa_1^0+\tilde R(k)R(k)\kappa_2^0+\kappa_2^0-R(k)\kappa_1^0\\
          -\kappa_2^0 & \kappa_1^0-\tilde R(k)\kappa_2^0
       \end{pmatrix},\quad  k\in\mathbb{C}^+\cap\{| k|=\epsilon\},\\

              \begin{pmatrix}
          \kappa_1^0+\tilde R^*(k)\kappa_2^0&-\kappa_2^0\\
          \tilde R^*(k)\kappa_1^0+\tilde R^*(k)R^*(k)\kappa_2^0+\kappa_2^0-R^*(k)\kappa_1^0&\kappa_1^0-R^*(k)\kappa_2^0
       \end{pmatrix},\quad  k\in\mathbb{C}^-\cap\{| k|=\epsilon\}
       
   \end{cases}
\end{equation}
and $R( k):=\frac{B( k)}{A( k)}$ and $\tilde R( k):=\frac{\tilde B( k)}{\tilde A( k)}$.
    \end{subequations}

\item Behavior at $\infty$:
\begin{equation}\label{inf_M_(t)}
     M^{(t)}(t, k)=I+O\left(\frac{1}{ k}\right),\quad k\to\infty.
\end{equation}

\item $\det M^{(t)}(t, k)\equiv1$

\item Symmetry properties:
\begin{equation}\label{sym-M_(t)}
M^{(t)}( k)=\overline{M^{(t)}(-\bar k)},\qquad M^{(t)}( k)=\sigma_2M^{(t)}( -k)\sigma_2
\end{equation}

\item Residue properties: Let $\{\mu_j\}$ be zeros of $A( k)$ in $\mathbb{C}^+$. Assume that they are all simple. Then, using the symmetries \eqref{sym_A_}, it follows that 
\begin{subequations}
    \label{res-M_(t)}
\begin{align}\label{res-M+_(t)}
\Res_{\mu_j}M^{(t)(2)}(t, k)&=\frac{B(\mu_j)}{\dot A(\mu_j)}e^{\frac{\ii t}{2\mu_j}}M^{(t)(1)}(t,\mu_j),\\
\label{res-M-_(t)}
\Res_{-\mu_j}M^{(t)(1)}(t, k)&=\frac{B(\mu_j)}{\dot A(\mu_j)}e^{\frac{\ii t}{2\mu_j}}M^{(t)(2)}(t,-\mu_j).
\end{align}
\end{subequations}

\item Assume that $v_0(t):=u(0,\cdot)\in W^{2,1}(0,T)$. Then expanding $\Phi_{02}^{(1)}(0,t, k)$ and $\Phi_{01}^{(2)}(0,t, k)$ at $ k=0$ in $\mathbb{C}^+$ via the Neumann series  using integration by parts yields
\[
\Phi_{02}^{(1)}(0,t, k)=\begin{pmatrix}
    1\\0
\end{pmatrix}+\ii k\begin{pmatrix} \frac{1}{2}\int_0^t
u_{t}^2(0,\tau)\dd \tau
    \\ -u_t(0,t)
\end{pmatrix}+O(k^2),
\]
and
\[
\Phi_{01}^{(2)}(0,t, k)=\begin{pmatrix}
    0\\1
\end{pmatrix}+\ii k\begin{pmatrix} -u_t(0,t)
    \\\frac{1}{2}\int_t^T
u_{t}^2(0,\tau)\dd \tau
\end{pmatrix}+O(k^2).
\]
In particular,
\[
A(k)=1+\frac{1}{2}\int_0^T
u_{t}^2(0,\tau)\dd \tau +O(k^2).
\]
Therefore,
\begin{subequations}\label{i_beh-M_(t)}
\begin{equation}\label{i_M_exp(x)}
     M^{(t)}(t,k)=P^{-1}(0,t)\left(I+\ii kM^{0}(t)+O(k^2)\right),\quad k\to 0, \quad k\in\mathbb{C}^+
\end{equation}
with
\begin{equation}\label{i_M_i(x)}
M^{0}(t)=\begin{pmatrix}
    \frac{1}{2}\int_0^t
u_{t}^2(0,\tau)\dd \tau&-u_t(0,t)\\
    -u_t(0,t)&-\frac{1}{2}\int_0^t
u_{t}^2(0,\tau)\dd \tau \end{pmatrix}.
\end{equation}
\end{subequations}

Similarly, expanding $\Phi_{01}^{(1)}(0,t, k)$ and $\Phi_{02}^{(2)}(0,t, k)$ at $ k=0$ in $\mathbb{C}^-$ yields \eqref{i_M_exp(x)}--\eqref{i_M_i(x)} in $\mathbb{C}^-$.
\end{enumerate}

\begin{remark}
     In the case 
    $u_{0}(0)=4\pi k$, the jump matrix simplifies to
\begin{equation}
   J^{(t)}_0( k)=\begin{cases}
       \begin{pmatrix}
          1+R( k)R^*( k)&-R( k)\\
          -R^*( k)&1
       \end{pmatrix},\quad  k\in\mathbb{R}\cap\{| k|>\epsilon\},\\
       \begin{pmatrix}
          1&\tilde R( k)\\
        \tilde R^*( k)&1+\tilde R( k)\tilde R^*( k)
       \end{pmatrix},\quad  k\in\mathbb{R}\cap\{| k|<\epsilon\},\\
       
       \begin{pmatrix}
        1&\tilde R(k)-R(k)\\
          0 & 1
       \end{pmatrix},\quad  k\in\mathbb{C}^+\cap\{| k|=\epsilon\},\\

              \begin{pmatrix}
          1&0\\
          \tilde R^*(k)-R^*(k)&1
       \end{pmatrix},\quad  k\in\mathbb{C}^-\cap\{| k|=\epsilon\}
       
   \end{cases}
\end{equation}
\end{remark}

Now we notice that the properties of $M^{(t)}$ stated above suggest that $M^{(t)}(t,k)$  can be 
uniquely characterized 
as a solution of the RH problem (parametrized by $t$).

\textbf{The Riemann--Hilbert problem RH $^{(t)}$ for $M^{(t)}(t, k)$:}
Given $A( k)$, $B( k)$ for $k\in\mathbb{C}\setminus\{0\}$ and the set $\{\mu_j\}$ in $\mathbb{C}^+$,
find  a piece-wise meromorphic $2\times 2$ matrix valued function $M^{(t)}(t, k)$ that satisfies the following conditions:

\begin{enumerate}
    \item Jump condition \eqref{jump_M_(t)}.

\item Behavior at $\infty$:
\begin{equation}\label{inf_hatM_(t)}
    M^{(t)}(t, k)=I+O\left(\frac{1}{ k}\right),\quad k\to\infty.
\end{equation}

\item Residue conditions \eqref{res-M_(t)}.
\end{enumerate}

Similarly to the $x$-problem, the following proposition holds:

\begin{proposition} If a solution $ M^{(t)}$ of \textbf{RH$^{(t)}$} exists, then 
    \begin{enumerate}
      \item  it is unique. 
      
        \item $\det M^{(t)}\equiv1$.

        \item 
        \begin{equation}\label{sym-hatM_(t)}
 M^{(t)}( k)=\overline{ M^{(t)}(-\bar k)},\qquad  M^{(t)}(k)=\sigma_2\overline{ M^{(t)}(\bar k)}\sigma_2.
\end{equation}
 
    \end{enumerate}
\end{proposition}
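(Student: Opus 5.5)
The plan is to mirror the argument for Proposition \ref{prop:Mx}, treating first the determinant, then uniqueness, then the symmetries. Throughout we use that the jump matrix $J^{(t)}$ in \eqref{jump_M_(t)} has unit determinant on every part of the contour. On the real axis this follows from $\det J^{(t)}_0=1$, which is immediate from the triangular-type structure of $J_0^{(t)}$. On the circle $|k|=\epsilon$ we compute the determinant directly from \eqref{J_0_jump_M_(t)} using $(\kappa_1^0)^2+(\kappa_2^0)^2=1$. The $R\tilde R$ cross terms cancel, leaving $(\kappa_1^0)^2+(\kappa_2^0)^2=1$.

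\emph{Determinant.} Given a solution $M^{(t)}$, the scalar function $d(k)=\det M^{(t)}(t,k)$ has no jump across $\mathbb{R}\cup\{|k|=\epsilon\}$ because $\det J^{(t)}=1$. At the possible poles $\mu_j$ and $-\mu_j$, the residue conditions \eqref{res-M_(t)} say that the singular column is a multiple of the other (regular) column evaluated at the same point. Hence the polar part of $d$ is proportional to the determinant of two equal columns, and so it vanishes, making $d$ regular there. The origin lies inside the disk $|k|<\epsilon$ and is not on the contour. Here I must check that $M^{(t)}$, in the class of solutions considered, is bounded near $k=0$, so that the singularity of $d$ at $0$ is removable. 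This is the step I expect to be the main obstacle. The jump on $|k|=\epsilon$ already absorbs the essential singularity $e^{\pm\ii t/(4k)}$, and the RH problem implicitly requires the inner pieces to be holomorphic in $\{0<|k|<\epsilon\}\setminus\mathbb{R}$ with boundary values on $\mathbb{R}\cap\{|k|<\epsilon\}$. I would make explicit the standard assumption, consistent with \eqref{i_beh-M_(t)}, that $M^{(t)}$ is bounded near $0$, and then apply Riemann's removable singularity theorem. With these points settled, $d$ is entire and tends to $1$ at infinity by \eqref{inf_hatM_(t)}, so Liouville's theorem gives $d\equiv1$.

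\emph{Uniqueness.} Let $M_1$ and $M_2$ be two solutions. By the determinant step $M_2$ is invertible, and $N=M_1M_2^{-1}$ satisfies $N_-=M_{1+}J^{(t)}(J^{(t)})^{-1}M_{2+}^{-1}=N_+$, so $N$ has no jump. To see that $N$ is regular at $\pm\mu_j$, write near $\mu_j$ the relation $M_i=\widehat M_i\bigl(I+\frac{c}{k-\mu_j}E_{12}\bigr)$ with $\widehat M_i$ holomorphic, where $c=\frac{B(\mu_j)}{\dot A(\mu_j)}e^{\ii t/(2\mu_j)}$ is the residue coefficient from \eqref{res-M_(t)}. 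The singular factor cancels in $M_1M_2^{-1}$, and the analogous factorization with $E_{21}$ handles $-\mu_j$. Near $k=0$, $N$ is bounded by the same boundedness assumption as in the determinant step. Thus $N$ is entire, $N\to I$ at infinity, and hence $N\equiv I$.

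\emph{Symmetries.} Set $\widehat M(k)=\overline{M^{(t)}(-\bar k)}$ and $\check M(k)=\sigma_2\overline{M^{(t)}(\bar k)}\sigma_2$. The map $k\mapsto-\bar k$ preserves the half-planes, the real axis (reversing its orientation), and the circle. Using \eqref{sym_A_} and the corresponding symmetries of $\tilde A$ and $\tilde B$, we check that $\widehat M$ and $\check M$ satisfy the same jump condition \eqref{jump_M_(t)}, the same normalization \eqref{inf_hatM_(t)}, and the same residue conditions \eqref{res-M_(t)}. For the residue conditions, the pole set $\{\mu_j,-\mu_j\}$ is mapped to itself because the zeros of $A$ in $\mathbb{C}^+$ are symmetric under $k\mapsto-\bar k$, so $\{\mu_j\}=\{-\bar\mu_j\}$. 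Since the symmetries exchange or preserve the half-planes and the circle, this verification reduces to bookkeeping with $R^*$, $\tilde R^*$ and the exponentials $e^{\pm\ii t/(4k)}$. By uniqueness, $\widehat M=M^{(t)}$ and $\check M=M^{(t)}$, which gives \eqref{sym-hatM_(t)}.
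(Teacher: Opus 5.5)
Your argument is the one the paper intends. For \textbf{RH$^{(t)}$} the paper only points back to Proposition~\ref{prop:Mx}: Liouville for $\det M^{(t)}$, Liouville for $M_1M_2^{-1}$, and the symmetries from invariance of the data plus uniqueness. Your insistence on a condition at $k=0$ is warranted. \textbf{RH$^{(t)}$} as stated imposes none, so $(I+C/k)M^{(t)}$ with any constant matrix $C\neq0$ satisfies the same jump, normalization and residue conditions, and its determinant is not $1$ when $\operatorname{tr}C\neq0$.

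Two points in your write-up need correcting, though. First, $k=0$ \emph{does} lie on the contour. The segment $\mathbb{R}\cap\{|k|<\epsilon\}$ carries the jump $\eul^{\frac{\ii t}{4k}\sigma_3}J_0^{(t)}(k)\eul^{-\frac{\ii t}{4k}\sigma_3}$, whose off-diagonal entries contain $\eul^{\pm\frac{\ii t}{2k}}$. These stay bounded only because $k$ is real, so the circle does not ``absorb'' the essential singularity. Your conclusion survives once this is stated correctly. $\det M^{(t)}$ and $M_1M_2^{-1}$ have no jump on $(-\epsilon,0)\cup(0,\epsilon)$, so they are holomorphic in the punctured disk. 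Boundedness near $0$ (or $L^2$ boundary values, as in Section~\ref{sec:7}) then makes $0$ removable.

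Second, your direct determinant computation is right on the upper arc but fails for the lower-arc matrix exactly as printed in \eqref{J_0_jump_M_(t)}. Its determinant is $1+2\kappa_1^0\kappa_2^0\bigl(\tilde R^*(k)-R^*(k)\bigr)$, because a typo swaps the roles of $R^*$ and $\tilde R^*$ on its diagonal. Computing the jump directly from \eqref{M_(t)} gives the factorized forms
\[
J_0^{(t)}=\begin{pmatrix}1&-R\\0&1\end{pmatrix}P^{-1}(0,0)\begin{pmatrix}1&\tilde R\\0&1\end{pmatrix}\ \text{(upper arc)},\qquad
J_0^{(t)}=\begin{pmatrix}1&0\\ \tilde R^*&1\end{pmatrix}P(0,0)\begin{pmatrix}1&0\\-R^*&1\end{pmatrix}\ \text{(lower arc)}.
\]
The first reproduces the printed upper-arc matrix. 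The second has diagonal $\kappa_1^0+R^*\kappa_2^0$, $\kappa_1^0-\tilde R^*\kappa_2^0$, and equals $\sigma_2\bigl(\overline{J_0^{(t)}(\bar k)}\bigr)^{-1}\sigma_2$ with $J_0^{(t)}(\bar k)$ the upper-arc matrix.

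With the literal printed matrix, neither $\det M^{(t)}\equiv1$ nor the second symmetry in \eqref{sym-hatM_(t)} can be derived. You should therefore state that you use the corrected jump. Working from the factorized form makes $\det J^{(t)}=1$ and your symmetry bookkeeping immediate.
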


The uniqueness of the solution of the Riemann--Hilbert problem \textbf{RH$^{(t)}$} justifies the following procedure for the inverse mapping
\[
    \{ A(k),\, B(k) \} \longrightarrow \{v_0(t)\}
\]
for the $t$-problem:

\begin{enumerate}[Step 1.]
    \item Given $ A(k)$ and $ B(k)$, compute $\kappa_i^0$, $\kappa_i^T$, $i=1,2$, and determine $\tilde A(k)$ and $\tilde B(k)$ via  \eqref{S_via_til_S}. Having these data, formulate the Riemann–Hilbert problem \textbf{RH$^{(t)}$};

    \item Solve the RH problem constructed in the previous step
    and evaluate its solution $M^{(t)}(t, k)$ at $ k=0$:

    \begin{equation}\label{Mt_exp_0}
        M^{(t)}(t, k)=\begin{pmatrix}
            \alpha(t)& \beta(t)\\
           - \beta(t)& \alpha(t)
       \end{pmatrix}\left( I+\ii k\begin{pmatrix}
            f_1(t)& f_2(t)\\ f_2(t)&-f_1(t)
       \end{pmatrix}+O(k^2)\right). 
    \end{equation}

    \item Define $ v_{0t}(t)$  from this expansions in the following way (cf. \eqref{i_beh-M_(t)}):
    \begin{equation}
        \label{hat_v_j_via_RH_t}
      v_{0t}(t)=- f_2(t).
      \end{equation}
\item     Reconstruct $v_0(t)$ by
\begin{equation}\label{Fund_v0}
   v_0(t) = v_0(0)+ \int_0^t v_{0t}(\tau)\dd \tau .
\end{equation}

\end{enumerate}

\begin{remark}\label{rec:sin}
    We could also define $\sin \tfrac{v_0(t)}{2}$ and $\cos \tfrac{v_0(t)}{2}$ by
\[
\sin \tfrac{v_0(t)}{2}=\beta(t),\qquad \cos \tfrac{v_0(t)}{2}=\alpha(t).
\]
Then 
\[
\sin v_0(t)=2\alpha(t)\beta(t),\qquad \cos v_0(t)=\alpha^2(t)-\beta^2(t).
\]
and
\[
v_{0t}=\cos v_0 (\sin v_0)_t-\sin v_0(\cos v_0)_t=-2\alpha\beta(\alpha+\beta)(\alpha_t+\beta_t).
\]

\end{remark}

\subsection{Problem II}\label{subsec:4.2}

\subsubsection{The Inverse $x$-Spectral Mapping}
Taking into account the analytic properties of the eigenfunctions and the scattering coefficients, we introduce the matrix-valued function 
 $\check M^{(x)}(x,k)$ defined in the domains separated by the contour shown in Figure \ref{fig:contour_RH_x}:

\begin{equation}\label{M_(x)_}
\check M^{(x)}(x,k)=\begin{cases}

\left(\check\Phi_{3}^{(1)}(x,0,k), \frac{\Phi_{ 2}^{(2)}(x,0,k)}{a^*(k)}\right),\quad k\in\mathbb{C}^+\cap\{| k| >\epsilon\},\\

\left(\check\Psi_{0 3}^{(1)}(x,0,k), \frac{\Psi_{0 2}^{(2)}(x,0,k)}{\tilde a^*(k)}\right),\quad k\in\mathbb{C}^+\cap\{| k| <\epsilon\},\\

\left( \frac{\Psi_{0 2}^{(1)}(x,0,k)}{\tilde a( k)},\check\Psi_{0 3}^{(2)}(x,0,k)\right),\quad k\in\mathbb{C}^-\cap\{| k| <\epsilon\},\\

\left( \frac{\Phi_{ 2}^{(1)}(x,0,k)}{a( k)},\check\Phi_{3}^{(2)}(x,0,k)\right),\quad k\in\mathbb{C}^-\cap\{| k| >\epsilon\}.

\end{cases}
\end{equation}

Then the function $\check M^{(x)}(x,k)$ has the following properties:

\begin{enumerate}
    \item Jump relation across $\mathbb{R}\cup\{|k|=\epsilon\}$:
    \begin{subequations}
        \label{jump_M_(x)_left}
        \begin{equation}
           \check M_-^{(x)}(x,k)=\check M_+^{(x)}(x,k)\check J(x,k),\quad k\in \mathbb{R}\cup\{|k|=\epsilon\} 
        \end{equation}
        where
         \begin{equation}
          \check J(x,k)=\eul^{-\ii kx\sigma_3} \check J_0(k)\eul^{\ii kx\sigma_3}
        \end{equation} 
        with 
\begin{equation}\label{J_0__}
   \check J_0(k)=\begin{cases}
       \begin{pmatrix}
       1+r(k)r^*(k)&r^*(k)\\r(k)&1
   \end{pmatrix},\quad k\in\mathbb{R}\cap\{|k| >\epsilon\},\\
   \begin{pmatrix}
     1 &-\tilde r^*(k)\\-\tilde r(k)& 1+\tilde r(k)\tilde r^*(k)
   \end{pmatrix},\quad k\in\mathbb{R}\cap\{|k| <\epsilon\},\\
\begin{pmatrix}
       1&\frac{\kappa_2^0}{a^*(k)\tilde a^*(k)}\\
       0&1
   \end{pmatrix}
,\quad k\in\mathbb{C}^+\cap\{|k| =\epsilon\},\\
    \begin{pmatrix}
       1&0\\
    \frac{\kappa_2^0 }{a(k)\tilde a(k)}&1
   \end{pmatrix},\quad k\in\mathbb{C}^-\cap\{|k| =\epsilon\},
   \end{cases} 
\end{equation}
where $r(k)=\frac{b^*(k)}{a(k)}$ and $\tilde r(k)=\frac{\tilde b^*(k)}{\tilde a(k)}$. 
    \end{subequations}

\item Behavior at $\infty$:
\begin{subequations}\label{inf_M_(x)__}
\begin{equation}\label{inf_M_exp(x)__}
    \check  M^{(x)}(x,k)=I+\frac{1}{4\ii k}\check M^{\infty}(x)+O(\frac{1}{k^2}),\quad k\to\infty
\end{equation}
with
\begin{equation}\label{inf_M_inf(x)__}
\check M^{\infty}(x)=\begin{pmatrix}
    \frac{1}{2}\int_{-\infty}^x
u_{0x}^2(y)\dd y&u_{0x}(x)\\
    u_{0x}(x)&- \frac{1}{2}\int_{-\infty}^x
u_{0x}^2(y)\dd y\end{pmatrix}.
\end{equation}
\end{subequations}

\item $\det \check M^{(x)}(x,k)\equiv1$

\item Symmetry properties:
\begin{equation}\label{sym-M_(x)__}
\check M^{(x)}(- k)=\overline{\check M^{(x)}(\bar k)},\qquad \check M^{(x)}(-k)=\sigma_2\check M^{(x)}(k)\sigma_2
\end{equation}

\item Residue properties. Let $\{k_j\}$ be zeros of $a(k)$ in $\mathbb{C}^-$. Assume that they are all simple. Then
\begin{subequations}
    \label{res-M_(x)__}
\begin{align}\label{res-M+_(x)__}
\Res_{k_j}\check M^{(x)(1)}(x,k)&=\frac{e^{2\ii k_jx}}{\dot a(k_j)b(k_j)}\check M^{(x)(2)}(x,k_j),\\
\label{res-M-_(x)__}
\Res_{- k_j}\check M^{(x)(2)}(x,k)&=\frac{e^{2\ii k_j x}}{\dot a( k_j)b( k_j)}\check M^{(x)(1)}(x,- k_j).
\end{align}
\end{subequations}

\item Behavior at $0$:
\begin{equation}\label{i_beh-M_(x)__}
\check M^{(x)}(x,k)=\check P_0(x)+O(k) , \quad k\to 0,  
\end{equation}
where
\[
\check P_0(x)=\begin{pmatrix}
\cos \frac{u_0}{2} & \sin \frac{u_0}{2}  \\
-\sin \frac{u_0}{2} & \cos \frac{u_0}{2} \\
\end{pmatrix}.
\]

\end{enumerate}

    \begin{remark}\label{rem:x-RH_reduced_left}
    In the case 
    $u_{0}(0)=4\pi k$, the 
    definition of $M^{(x)}(x,k)$ and the corresponding 
    jump conditions simplify,
    in view of \eqref{Phi_0_inf_reduced}, to the following:
     \begin{equation*}
           \check M_-^{(x)}(x,k)=\check M_+^{(x)}(x,k)\check J(x,k),\quad k\in \mathbb{R} 
        \end{equation*}
        where
\[
\check M^{(x)}(x,k)=\begin{cases}
\left(\check\Phi_{3}^{(1)}(x,0,k), \frac{\Phi_{ 2}^{(2)}(x,0,k)}{a^*(k)}\right),\quad k\in\mathbb{C}^+,\\
\left( \frac{\Phi_{ 2}^{(1)}(x,0,k)}{a( k)},\check \Phi_{3}^{(2)}(x,0,k),\right),\quad k\in\mathbb{C}^-.
\end{cases}
\]
\begin{equation*}
         \check  J(x,k)=\eul^{-\ii kx\sigma_3} J_0^{(x)}(k)\eul^{\ii kx\sigma_3}
        \end{equation*} 
        with  
\begin{equation}\label{J_0___simpl}
   \check J^{(x)}_0( k)=
      \begin{pmatrix}
       1+r(k)r^*(k)&r^*(k)\\r(k)&1
   \end{pmatrix},\quad k\in\mathbb{R}.
\end{equation}
Here $\mathbb{R}$ is oriented from left to right.
\end{remark}

Now we notice that the properties of $\check M^{(x)}$ stated above suggest that $\check  M^{(x)}(x,k)$  can be 
uniquely characterized 
as a solution of the RH problem (parametrized by $x$).

\textbf{The Riemann--Hilbert problem $\widecheck{\text{RH}}^{(x)}$ for $\check M^{(x)}(x, k)$:}
Given $a( k)$, $b( k)$ for $ k\in\mathbb{C}^+$ and the set $\{k_j\}$ in $\mathbb{C}^+$, find a  piece-wise meromorphic $2\times 2$ matrix valued function $\check M^{(x)}(x, k)$ that satisfies the following conditions:

\begin{enumerate}
    \item Jump condition \eqref{jump_M_(x)_left}.

\item Behavior at $\infty$:
\begin{equation}\label{inf_hatM_(x)__}
    \check M^{(x)}(x, k)=I+O\left(\frac{1}{ k}\right),\quad k\to\infty.
\end{equation}

\item Residue conditions \eqref{res-M_(x)__}.
\end{enumerate}

Similarly to the Problem I, the proposition similar to Proposition \ref{prop:Mx} holds for \textbf{$\widecheck{\text{RH}}^{(x)}$}, which justifies the following procedure of the inverse mapping: 

\begin{enumerate}[Step 1.]
    \item Given $a(k)$ and $b(k)$, compute $\kappa_1^0$ and $\kappa_2^0$ through \eqref{a_at_i}, and determine $\tilde a(k)$ and $\tilde b(k)$ through \eqref{s_via_til_s}. With these data, formulate the Riemann–Hilbert problem \textbf{$\widecheck{\text{RH}}^{(x)}$};

    \item Solve the RH problem constructed in the previous step and evaluate its solution $ \check M^{(x)}(x,k)$ at $ k=\infty$ (c.f. \eqref{inf_M_(x)__}):

    \begin{equation*}
      \check M^{(x)}(x, k)=I+\frac{1}{4\ii k}\begin{pmatrix}
           -\eta(x) & \xi(x)\\
           \xi(x) & \eta(x) 
       \end{pmatrix}+O\left(\frac{1}{k^2}\right).
    \end{equation*}

    \item Determine $u_{0x}(x)$  as follows:
    
\begin{equation}\label{m_0_via_RH_x__}
    u_{0x}(x)=\xi(x). 
    \end{equation}

    \item  Having $u_{0x}(x)$, reconstruct $u_0(x)$ by
\begin{equation}\label{Fund_u0__}
   u_0(x) = \int_{-\infty}^x u_{0x}(y)\dd y +2\pi k.
\end{equation}

\end{enumerate}

For an alternative way to reconstruct $u_0(x)$ see Remark \ref{rem:rec_u0}.

\subsubsection{The Inverse $t$-Spectral Mapping}

Naturally, Problems I and II share the same $t$-spectral mapping.

\section{The master Riemann-Hilbert problems}\label{sec:5}

\subsection{Problem I}\label{subsec:5.1} 

Assume that Problem I \eqref{pr_1} admits a solution $u(x,t)$. Using suitable solutions of the Lax pair equations, we define the piecewise (with respect to the  contour shown in Figure \ref{fig:contour_RH_x}) meromorphic matrix-valued function $M(x,t,k)$, depending on $(x,t)$ as parameters:

\begin{equation}\label{M_(xt)}
M^{(xt)}(x,t,k)=\begin{cases}

\left( \frac{\Phi_{ 2}^{(1)}(x,t,k)}{a(k)},\Phi_{ 3}^{(2)}(x,t,k)\right),\quad k\in\mathbb{C}^+\cap\{|k| >\epsilon\},\\

\left( \frac{\Psi_{0 2}^{(1)}(x,t,k)}{\tilde a(k)},\Psi_{0 3 }^{(2)}(x,t,k)\right),\quad k\in\mathbb{C}^+\cap\{|k| <\epsilon\},\\

\left( \Phi_{ 3}^{(1)}(x,t,k),\frac{\Phi_{ 2}^{(2)}(x,t,k)}{a^*( k)}\right),\quad k\in\mathbb{C}^-\cap\{|k| >\epsilon\},\\

\left( \Psi_{0 3}^{(1)}(x,t,k),\frac{\Psi_{0 2}^{(2)}(x,t,k)}{\tilde a^*( k)}\right),\quad k\in\mathbb{C}^-\cap\{|k| <\epsilon\},

\end{cases}
\end{equation}
where the functions $\Psi_{0j}$
are defined by \eqref{psi_0}.

The function $M^{(xt)}(x,t,k)$ possesses the following properties:

\begin{enumerate}
    \item Jump relation across $\mathbb{R}\cup\{|k|=\epsilon\}$
    \begin{subequations}
        \label{jump_M_(xt)}
        \begin{equation}
           M_-^{(xt)}(x,t,k)=M_+^{(xt)}(x,t,k)J^{(xt)}(x,t,k),\quad k\in \mathbb{R}\cup\{|k|=\epsilon\} 
        \end{equation}
        where,
         \begin{equation} \label{jump_M_(xt)_matr}
          J^{(xt)}(x,t,k)=\eul^{-\ii kp(x,t,k)\sigma_3} J^{(xt)}_0(k)\eul^{\ii kp(x,t,k)\sigma_3}
        \end{equation} 
        with $p(x,t,k)=x-\frac{t}{4 k^2}$ and
\begin{equation}\label{jump_M_(xt)_matr_0}
   J^{(xt)}_0(k)=\begin{cases}
       \begin{pmatrix}
       1&-r^*(k)\\-r(k)&1+r(k)r^*(k)
   \end{pmatrix},\quad k\in\mathbb{R}\cap\{|k| >\epsilon\},\\
   \begin{pmatrix}
       1+\tilde r(k)\tilde r^*(k)&\tilde r^*(k)\\\tilde r(k)&1
   \end{pmatrix},\quad k\in\mathbb{R}\cap\{|k| <\epsilon\},\\
   \begin{pmatrix}
       1&0\\
       -\frac{\kappa_2^0}{a(k)\tilde a(k)}&1
   \end{pmatrix},\quad k\in\mathbb{C}^+\cap\{|k| =\epsilon\},\\
   \begin{pmatrix}
       1&-\frac{\kappa_2^0}{a^*(k)\tilde a^*(k)}\\
       0&1
   \end{pmatrix},\quad k\in\mathbb{C}^-\cap\{|k| =\epsilon\}
   \end{cases} 
\end{equation}
and $r(k)=\frac{b^*(k)}{a(k)}$, $\tilde r(k)=\frac{\tilde b^*(k)}{\tilde a(k)}$.
Notice that \eqref{s_via_til_s} implies
\[
\tilde r(k)=r(k)-\frac{\kappa_2 }{a(k)\tilde a(k)}, \quad k\in\mathbb{R}.\]
 \end{subequations}

\item Behavior at $\infty$:
\begin{equation}\label{inf_M_(xt)}
     M^{(xt)}(x,t,k)=I+\frac{1}{4\ii k}\begin{pmatrix}
    *&u_{x}(x,t)\\
    u_{x}(x,t)&*\end{pmatrix}+O(\frac{1}{k^2}),\quad k\to\infty.
\end{equation}

\item $\det M^{(xt)}(x,t,k)\equiv1$

\item Symmetry properties:
\begin{equation}\label{sym-M_(xt)}
M^{(xt)}( k)=\overline{M^{(xt)}(-\bar k)},\qquad M^{(xt)}(k)=\sigma_2 \overline{M^{(xt)}(\bar k)}\sigma_2
\end{equation}

\item Residue properties. Let $\{k_j\}$ be zeros of $a(k)$ in $\mathbb{C}^+$. Assume that they are all simple. Then

\begin{subequations}\label{res-M_(xt)}
\begin{align}\label{res-M+_(xt)}
\Res_{k_j}M^{(xt)(1)}(x,t,k)&=\frac{e^{2\ii k_jp(x,t,k_j)}}{\dot a(k_j)b(k_j)}M^{(xt)(2)}(x,t,k_j),\\
\label{res-M-_(xt)}
\Res_{- k_j}M^{(xt)(2)}(x,t,k)&=\frac{e^{2\ii k_jp(x,t, k_j)}}{\dot a( k_j)b( k_j)}M^{(xt)(1)}(x,t,- k_j).
\end{align}
\end{subequations}

\item Behavior at $0$:
\begin{equation}\label{i_beh-M_(xt)}
M^{(xt)}(x,t,k)=P^{-1}(x,t)\left(I-\ii k\begin{pmatrix}
     *&u_t(x,t)\\
     u_t(x,t)& *
\end{pmatrix}+O(k^2)\right).\end{equation}
    
\end{enumerate}

The properties of $M^{(xt)}$ stated above (that follow from the analysis of the direct problem) suggest formulating a Riemann--Hilbert factorization problem parametrized by $(x,t)$.

\textbf{The Riemann--Hilbert problem RH$^{(xt)}$ for $M^{(xt)}(x,t,k)$:} Given $a(k)$, $b(k)$ for $k\in\mathbb{C}^+$, and the set $\{k_j\}_1^N\subset \mathbb{C}^+$, find a piece-wise meromorphic $2\times 2$  matrix valued function $M^{(xt)}(x,t,\mu)$ that satisfies the following conditions:

\begin{enumerate}
    \item Jump condition \eqref{jump_M_(xt)} across $\mathbb{R}\cup|k|=\epsilon$.

\item Behavior at $\infty$:
\begin{equation}\label{inf_hatM_(xt)}
      M^{(xt)}(x, t,k)=I+O\left(\frac{1}{ k}\right),\quad k\to\infty.
\end{equation}

\item Residue conditions \eqref{res-M_(xt)}.
\end{enumerate}

\begin{proposition}\label{prop:5.2}
   There exists a unique solution $M^{(xt)}(x,t,k)$ of the RH problem \textbf{RH$^{(xt)}$ } for all $x$ and $t$. Moreover,
        $\det  M^{(xt)}\equiv1$
    and
        \begin{equation}\label{sym-hatM_(xt)}
M^{(xt)}( k)=\overline{ M^{(xt)}(-\bar k)},\qquad  M^{(xt)}(k)=\sigma_2\overline{ M^{(xt)}(\bar k)}\sigma_2
\end{equation}
  
\end{proposition}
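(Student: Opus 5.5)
Existence is the substantive part of the claim; uniqueness, $\det M^{(xt)}\equiv1$ and the symmetries then follow as in Proposition \ref{prop:Mx}. Note that the jump $J^{(xt)}$ depends on $(x,t)$ only through the conjugation by $\eul^{-\ii k p(x,t,k)\sigma_3}$. On $\mathbb{R}$ we have $p(x,t,k)\in\mathbb{R}$, so $|\eul^{\pm\ii kp}|=1$. On the circle $|k|=\epsilon$ the triangular factors carry the exponentials $\eul^{\pm2\ii kp}$, which are bounded on the relevant arcs for the $(x,t)$ under consideration.

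I would first remove the poles. Using a standard Blaschke-type dressing, or replacing each residue condition \eqref{res-M_(xt)} by a jump on a small circle around $\pm k_j$, I reduce \textbf{RH$^{(xt)}$} to a holomorphic RH problem on an augmented contour $\Sigma$. On $\Sigma$ the jump $\hat J(x,t,\cdot)$ satisfies $\hat J-I\in L^2\cap L^\infty(\Sigma)$ and $\det\hat J=1$. I then write the problem as the singular integral equation $\mu=I+\mathcal{C}_{\hat w}\mu$ in $L^2(\Sigma)$, where $\mathcal{C}_{\hat w}$ is the Beals--Coifman operator built from a factorization of $\hat J$ into triangular factors. This operator is Fredholm of index zero, because the contour is a finite union of smooth curves and the jumps are continuous and tend to $I$ as $k\to\infty$ and (on the small-circle pieces) are bounded. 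Existence is therefore equivalent to uniqueness of the homogeneous problem, which I will establish by a vanishing lemma.

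For the vanishing lemma, let $N$ solve the problem with $N=O(1/k)$ at infinity. I would use Zhou's argument, exploiting the Schwarz-type symmetry $\sigma_2\overline{(\cdot)(\bar k)}\sigma_2$ of \eqref{sym-hatM_(xt)}. On $|k|>\epsilon$ the real-axis jump $J_0$ is Hermitian positive definite: its $(11)$ entry is $1$ and its determinant is $1$ by \eqref{detrel_ab}. On $|k|<\epsilon$ the corresponding statement follows from \eqref{detrel_tilab}. I form $H(k)=N_+(k)\,N_-(\bar k)^\dagger$, suitably adapted in the upper half-plane. Its integral over $\mathbb{R}$ vanishes by Cauchy's theorem, because the contributions of the circle $|k|=\epsilon$ and of the pole circles cancel thanks to the reflection symmetry of the corresponding jumps. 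Hermitian positivity of the real-line jump then gives $N_+\equiv0$ on $\mathbb{R}$, and hence $N\equiv0$.

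The main obstacle will be the circle $|k|=\epsilon$. There the jumps are triangular and non-Hermitian, and near $k=0$ the exponentials $\eul^{\pm\ii t/(2k)}$ are wild. To handle this I would first undo the small-disk transformation: multiply $M$ inside $|k|<\epsilon$ by the triangular factors from \eqref{jump_M_(xt)_matr_0}. This is legitimate because $a\tilde a$ has no zeros in the disk by the choice of $\epsilon$. The result is an equivalent RH problem with a jump on $\mathbb{R}$ only, of the form $\begin{pmatrix}1&-r^*\\-r&1+rr^*\end{pmatrix}$ everywhere, and this jump is positive definite. The cost is that $r$ is discontinuous-free but merely bounded near $0$, since $a(0)=\kappa_1^0$. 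If $\kappa_1^0=0$ this is the degenerate case, and I would keep the $\epsilon$-circle, using $\tilde r$ there instead.

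Finally, for each fixed $(x,t)$ the vanishing lemma yields a unique $L^2$ solution. The symmetries \eqref{sym-hatM_(xt)} and $\det M^{(xt)}\equiv1$ then follow from uniqueness, exactly as in Proposition \ref{prop:Mx}.
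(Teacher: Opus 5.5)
Your core argument is the paper's. You write the jump in nilpotent triangular Beals--Coifman form, so that $1-C_w$ is Fredholm of index zero (the lemma quoted in Section~\ref{sec:7}). You get existence from Zhou's vanishing lemma, using two facts: $J^{(xt)}$ is Hermitian positive definite on $\mathbb{R}$, both for $|k|>\epsilon$ and $|k|<\epsilon$, and it satisfies $J^{(xt)}(x,t,k)=\overline{J^{(xt)}(x,t,\bar k)}^{T}$ on $|k|=\epsilon$. Uniqueness, $\det M^{(xt)}\equiv1$ and the symmetries then follow by Liouville arguments. A minor point: $\det J_0=1$ on $\mathbb{R}$ is pure algebra, so \eqref{detrel_ab} and \eqref{detrel_tilab} are not needed there.

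Your paragraph on the ``main obstacle'', however, rests on a misconception, and the proposed fix should be dropped.

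\emph{The circle is not an obstacle.} Zhou's lemma does not ask the jumps off $\mathbb{R}$ to be Hermitian; it only asks for the reflection symmetry $v(\bar k)=v(k)^*$. The lower-triangular jump on the upper arc is exactly the conjugate transpose, at $\bar k$, of the upper-triangular jump on the lower arc. This uses $p^*=p$, $\overline{a^*(\bar k)}=a(k)$ and $\overline{\tilde a^*(\bar k)}=\tilde a(k)$. So the circle contributions cancel, as you yourself said one paragraph earlier and as the paper uses.

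\emph{Undoing the disk transformation is counterproductive.} On $(-\epsilon,\epsilon)$ it restores off-diagonal entries $r(k)\eul^{\pm(2\ii kx-\ii t/(2k))}$ with $r(0)=\kappa_2^0/\kappa_1^0$, which is nonzero in general. These entries have no limit as $k\to0$. The resulting jump is therefore neither piecewise continuous, so the Fredholm lemma does not apply to it, nor $I+O(k)$ at $0$. Removing exactly this essential singularity is the purpose of the $\epsilon$-circle, on which $\tilde r=O(k)$.

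\emph{The step is also not justified ``by the choice of $\epsilon$''.} That choice only concerns zeros of $\tilde a$, $\tilde A$ and their reflections. Boundedness of the triangular factors in the half-disks actually needs $\kappa_1^0=a(0)\neq0$, together with $t\ge0$ so that $|\eul^{\mp \ii t/(2k)}|\le1$ in $\mathbb{C}^\pm$.

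If you use the undone problem only for the vanishing step and keep the Fredholm argument on the original contour, the logic survives when $\kappa_1^0\neq0$. For $\kappa_1^0=0$, though, you fall back on the circle, where the reflection-symmetry argument is needed anyway. So use that argument throughout.
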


Concerning the proof, 
    notice that the jump matrix $ J^{(xt)}(x,t, k)$ defined in  
\eqref{jump_M_(xt)_matr}--\eqref{jump_M_(xt)_matr_0} is positively defined on $\mathbb R$ and satisfies the symmetry \[ J^{(xt)}(x, t,k)=\overline{ J^{(xt)T}(x,t,\bar  k)}, \qquad k\in\{|k|=\epsilon\}.\] 
These properties allow us to invoke Zhou’s vanishing lemma (see \cite{Z89}), which ensures the solvability of the Riemann–Hilbert problem \textbf{RH$^{(xt)}$}.

\begin{remark}
    The construction of the master RH problem \textbf{RH$^{(xt)}$} uses the same 
data as \textbf{RH$^{(x)}$}, i.e., the 
scattering functions associated with the initial condition alone.
\end{remark}

\begin{remark}\label{rem:jump_coins}
   In the case 
    $u_{0}(0)=4\pi k$,  \textbf{RH$^{(xt)}$} simplifies to the following:
    \begin{enumerate}
    \item Jump condition  \begin{equation*}
           M_-^{(xt)}(x,t,k)=M_+^{(xt)}(x,t,k)J(x,t,k),\quad k\in \mathbb{R} 
        \end{equation*}
        where
        \begin{equation*}
          J(x,t,k)=\eul^{-\ii kp(x,t,k)\sigma_3} J_0(k)\eul^{\ii kp(x,t,k)\sigma_3}
        \end{equation*} 
        with  
\begin{equation*}
   J_0( k)=
      \begin{pmatrix}
       1&-r^*(k)\\-r(k)&1+r(k)r^*(k)
   \end{pmatrix},\quad k\in\mathbb{R}.
\end{equation*}
Here $\mathbb{R}$ is oriented from left to right.
\item Behavior at $\infty$:
\begin{equation*}
      M^{(xt)}(x, t,k)=I+O\left(\frac{1}{ k}\right),\quad k\to\infty.
\end{equation*}
\item Residue conditions \eqref{res-M_(xt)}.
\end{enumerate}   
\end{remark}

\subsection{Problem II}\label{subsec:5.2}
Assume that Problem II \eqref{pr_2} admits a solution $u(x,t)\in C^1([0,T],\tilde H^{1,2}((-\infty,0)))$.

Introduce 
\begin{align*}
    d(k)=a(k)A^*(k)+b(k)B^*(k), \quad \tilde d(k)=\tilde a(k)\tilde A^*(k)+\tilde b(k)\tilde B^*(k),
\end{align*}
and define the piecewise meromorphic matrix-valued function $M(x,t,k)$, depending on $(x,t)$ as parameters, in the domains separated by the contour shown in Figure \ref{fig:contour_RH_x}:

\begin{equation}\label{M_(xt)__}
\check M^{(xt)}(x,t,k)=\begin{cases}

\left(\check \Phi_{3}^{(1)}(x,t,k),\frac{\Phi_{ 1}^{(2)}(x,t,k)}{d^*(k)}\right),\quad k\in\mathbb{C}^+\cap\{|k| >\epsilon\},\\

\left(\check \Psi_{0 3}^{(1)}(x,t,k),\frac{\Psi_{0 1}^{(2)}(x,t,k)}{\tilde d^*(k)}\right),\quad k\in\mathbb{C}^+\cap\{|k| <\epsilon\},\\

\left( \frac{\Phi_{ 1}^{(1)}(x,t,k)}{d(k)},\check \Phi_{3}^{(2)}(x,t,k)\right),\quad k\in\mathbb{C}^-\cap\{|k| >\epsilon\},\\

\left( \frac{\Psi_{0 1}^{(1)}(x,t,k)}{\tilde d(k)},\check \Psi_{0 3}^{(2)}(x,t,k)\right),\quad k\in\mathbb{C}^-\cap\{|k| <\epsilon\},

\end{cases}
\end{equation}
where the functions $\Psi_{0j}$
are defined by \eqref{psi_0}.

The function $\check M^{(xt)}(x,t,k)$ possesses the following properties:
\begin{enumerate}
    \item Jump relation across $\mathbb{R}\cup\{|k|=\epsilon\}$:
    \begin{subequations}
        \label{jump_M_(xt)__}
        \begin{equation}
          \check  M_-^{(xt)}(x,t,k)=\check M_+^{(xt)}(x,t,k)\check J^{(xt)}(x,t,k),\quad k\in\Sigma,
        \end{equation}
        where
         \begin{equation}
         \check  J^{(xt)}(x,t,k)=\eul^{-p(x,t,k)\sigma_3} \check J^{(xt)}_0(k)\eul^{p(x,t,k)\sigma_3}
        \end{equation} 
        with       
\begin{equation}\label{jump_M_(xt)_0__}
  \check J^{(xt)}_0(k)=\begin{cases}      \begin{pmatrix}
        1+\rho(k)\rho^*(k)&-\rho(k)\\
          -\rho^*(k)&1
       \end{pmatrix},\quad \mu\in\mathbb{R}\setminus\{|k|<\epsilon\},\\
\begin{pmatrix}
          1&\tilde \rho(k)\\
          \tilde \rho^*(k)& 1+\tilde\rho(k)\tilde\rho^*(k)
       \end{pmatrix},\quad \mu\in\mathbb{R}\cap\{|k|<\epsilon\},\\
      \begin{pmatrix}
{1}&\Xi(k)\\
          0&{1}
       \end{pmatrix},\quad \mu\in\mathbb{C}^+\cap\{|k|=\epsilon\},\\
            \begin{pmatrix}
          {1}&0\\
         \Xi^*(k)&{1}
       \end{pmatrix},\quad \mu\in\mathbb{C}^-\cap\{|k|=\epsilon\},  
   \end{cases}
\end{equation}
    \end{subequations}
where
\begin{align}\label{rho}
&\rho(k):= \frac{a(k)B(k)-b(k)B(k)}{d^*(k)},\\
\label{til_rho}
&\tilde \rho(k):= \frac{\tilde a(k)\tilde B(k)-\tilde b(k)\tilde B(k)}{\tilde d^*(k)},\\
&\Xi(k):=\frac{\kappa_1^0(A(k)\tilde B(k)-B(k)\tilde A(k))+\kappa_2^0(B(k) \tilde B(k) +A(k) \tilde A(k))}{\tilde d^*(k) d^*(k)}.
\end{align}
Notice that
\[
\rho(k)-\tilde \rho(k)+\Xi(k)=0.
\]

\item Behavior at $\infty$:
\begin{equation}\label{inf_M_(xt)__}
    \check  M^{(xt)}(x,t,k)=I+\frac{1}{4\ii k}\begin{pmatrix}
    *&u_{x}(x,t)\\
    u_{x}(x,t)&*\end{pmatrix}+O(\frac{1}{k^2}),\quad k\to\infty.
\end{equation}

\item $\det \check M^{(xt)}(x,t,k)\equiv1$

\item Symmetry properties:
\begin{equation}\label{sym-M_(xt)__}
\check M^{(xt)}(- k)=\overline{\check M^{(xt)}(\bar k)},\qquad \check M^{(xt)}(k)=\sigma_2\overline{\check M^{(xt)}(\bar k)}\sigma_2
\end{equation}

\item Residue properties. Let $\{\eta_j\}$ be zeros of $d(k)$ in $\mathbb{C}^-$. Assume that they are all simple. Then

\begin{subequations}\label{res-M_(xt)__}
\begin{align}\label{res-M+_(xt)__}
\Res_{\eta_j}\check M^{(xt)(1)}(x,t,k)&=-\frac{e^{2\ii \eta_jp(x,t,\eta_j)}B^*(\eta_j)}{\dot d(\eta_j)a(\eta_j)}\check M^{(xt)(2)}(x,t,\eta_j),\\
\label{res-M-_(xt)__}
\Res_{- \eta_j}\check M^{(xt)(2)}(x,t,k)&=\frac{e^{2\ii \eta_jp(x,t, \eta_j)}B^*(\eta_j)}{\dot d( \eta_j)a( \eta_j)}\check M^{(xt)(1)}(x,t,- \eta_j).
\end{align}
\end{subequations}

\item Behavior at $0$:
\begin{equation}\label{i_beh-M_(xt)__}
\check M^{(xt)}(x,t,k)=P^{-1}(x,t)\left(I-\ii k\begin{pmatrix}
     *&u_t(x,t)\\
     u_t(x,t)& *
\end{pmatrix}+O(k^2)\right) .\end{equation}

\end{enumerate}

\begin{remark}
    In the case $u(0,0)=4\pi k$ the matrix $\check J_0^{(xt)}(k)$ simplifies to the following:

\begin{equation}\label{jump_M_(xt)_0__simpl}
\check J^{(xt)}_0(k)=\begin{cases}      \begin{pmatrix}
          \frac{1}{dd^*}&-\frac{aB-bA}{d^*}\\
          -\frac{a^*B^*-b^*A^*}{d}&1
       \end{pmatrix},\quad \mu\in\mathbb{R}\setminus\{|k|<\epsilon\},\\
\begin{pmatrix}
          1&\frac{\tilde a\tilde B-\tilde b\tilde A}{\tilde d^*}\\
          \frac{\tilde a^*\tilde B^*-\tilde b^*\tilde A^*}{\tilde d}&\frac{1}{\tilde d\tilde d^*}
       \end{pmatrix},\quad \mu\in\mathbb{R}\cap\{|k|<\epsilon\},\\
      \begin{pmatrix}
          {1}&\frac{A\tilde B-B\tilde A}{\tilde d^* d^*}\\
          0&{1}
       \end{pmatrix},\quad \mu\in\mathbb{C}^+\cap\{|k|=\epsilon\},\\
            \begin{pmatrix}
          {1}&0\\
         \frac{A^*\tilde B^*-B^*\tilde A^*}{\tilde d d}&{1}
       \end{pmatrix},\quad \mu\in\mathbb{C}^-\cap\{|k|=\epsilon\}. 
   \end{cases}
\end{equation}
    
\end{remark}

The properties of $\check M^{(xt)}$ stated above give rise to a 
family of Riemann--Hilbert factorization problems parametrized by $(x,t)$.

\textbf{The Riemann--Hilbert problem \textbf{$\widecheck{\text{RH}}^{(xt)}$} for $\check M^{(xt)}(x,t,k)$:} Given $a(k)$, $b(k)$ for $k\in\mathbb{C}^+$, and the set $\{k_j\}_1^N\subset \mathbb{C}^+$, find a piece-wise meromorphic $2\times 2$  matrix valued function $M^{(xt)}(x,t,\mu)$ that satisfies the following conditions:

\begin{enumerate}
    \item Jump condition \eqref{jump_M_(xt)__} across $\mathbb{R}\cup|k|=\epsilon$.

\item Behavior at $\infty$:
\begin{equation}\label{inf_hatM_(xt)__}
     \check M^{(xt)}(x, t,k)=I+O\left(\frac{1}{ k}\right),\quad k\to\infty.
\end{equation}

\item Residue conditions \eqref{res-M_(xt)__}.
\end{enumerate}

  Similarly to Problem I, the following proposition holds:

\begin{proposition} There exists a unique solution $\check M^{(xt)}(x,t,k)$ of the RH problem \textbf{$\widecheck{\text{RH}}^{(x)}$} for all $x$ and $t$. Moreover,
        $\det \check M^{(xt)}\equiv1$
    and
        \begin{equation}\label{sym-hatM_(xt)__}
\check M^{(xt)}( k)=\overline{\check M^{(xt)}(-\bar k)},\qquad \check M^{(xt)}(k)=\sigma_2\overline{\check M^{(xt)}(\bar k)}\sigma_2
\end{equation}

\end{proposition}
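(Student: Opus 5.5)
The proof follows the scheme used for Proposition \ref{prop:5.2}: uniqueness and the algebraic properties come from Liouville-type arguments, and existence comes from Zhou's vanishing lemma \cite{Z89}.

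\emph{Determinant and uniqueness.} First we show $\det \check M^{(xt)}\equiv1$ for any solution. Every piece of $\check J^{(xt)}_0$ in \eqref{jump_M_(xt)_0__} has unit determinant. On $\mathbb{R}$ this uses $1+\rho\rho^*-\rho\rho^*=1$. On $\{|k|=\epsilon\}$ the jumps are triangular. Conjugation by $\eul^{\pm\ii kp\sigma_3}$ does not change determinants. The residue conditions \eqref{res-M_(xt)__} say that near $\eta_j$ (respectively $-\eta_j$) one column has a simple pole whose residue is proportional to the other column evaluated at the same point. Hence the determinant has removable singularities there. Therefore $\det\check M^{(xt)}$ is entire and tends to $1$ at infinity, so it equals $1$ by Liouville. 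For uniqueness, given two solutions $M_1,M_2$, we check that $M_1M_2^{-1}$ has no jump. The residue conditions, being of the form ``column times a nilpotent matrix'', make the poles at $\pm\eta_j$ removable by the standard computation. So $M_1M_2^{-1}$ is entire and tends to $I$, hence equals $I$. One point needs care at $k=0$: the factor $\eul^{\ii t/(4k)\sigma_3}$ in $\check J^{(xt)}$ has an essential singularity there. Because the contour is deformed to $\{|k|=\epsilon\}$ and the inner jumps are only on $\mathbb{R}\cap\{|k|<\epsilon\}$, we must check that solutions are bounded near $0$. This should be built into the notion of solution as boundedness in $L^2$ boundary values, as in Zhou's framework.

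\emph{Symmetries.} We verify that $\check J^{(xt)}_0$ inherits the symmetries of $a,b,A,B,\tilde a,\tilde b,\tilde A,\tilde B$ from \eqref{sym_a_}, \eqref{sym_tila_}, \eqref{sym_A_}, \eqref{sym_tilA_}. We also verify that $p(-k)=p(k)=p^*(k)$. Together these imply that $\overline{\check M^{(xt)}(-\bar k)}$ and $\sigma_2\overline{\check M^{(xt)}(\bar k)}\sigma_2$ solve the same RH problem, including the residue conditions at $\pm\eta_j$, which are permuted accordingly. By uniqueness, \eqref{sym-hatM_(xt)__} follows.

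\emph{Existence.} This is the main step. We first remove the poles $\pm\eta_j$ by the standard device: replace them with small circles carrying triangular jumps, giving an equivalent regular RH problem. We then apply the vanishing lemma, which requires two things. On $\mathbb{R}$ the jump matrix must be positive definite: $\check J_0^{(xt)}=\begin{pmatrix}1+\rho\rho^*&-\rho\\-\rho^*&1\end{pmatrix}$ with $\rho^*=\bar\rho$ on $\mathbb{R}$, which is Hermitian with determinant $1$ and trace $>0$. The same holds for the $\tilde\rho$ block, and conjugation by the unimodular diagonal factor $\eul^{\ii kp\sigma_3}$ ($k$ and $p$ real) preserves positivity. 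Off $\mathbb{R}$, the jumps on the upper and lower halves of $\{|k|=\epsilon\}$, and on the pole circles, must be related by $\check J(\bar k)=\overline{\check J(k)}^{T}$ with the orientations matching. Here $\begin{pmatrix}1&\Xi\\0&1\end{pmatrix}$ and $\begin{pmatrix}1&0\\\Xi^*&1\end{pmatrix}$ are Schwarz-adjoint, and $\overline{p(\bar k)}=p(k)$. The delicate point, and where I expect the most work, is making this symmetric-pair condition consistent across all contour pieces, including the circle orientations from Figure \ref{fig:contour_RH_x}. The relation $\rho-\tilde\rho+\Xi=0$ guarantees that the product of jumps at the points $\pm\epsilon$ equals $I$, which is the cyclic condition needed for the $L^2$ theory. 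With both conditions in place, Zhou's lemma shows that the homogeneous problem has only the trivial solution. Fredholm theory, using the fact that the singular integral operator is Fredholm of index zero, then gives existence for every $(x,t)$.
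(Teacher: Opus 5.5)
Your proposal is correct and follows the same route as the paper, which proves this proposition only by reference to Problem I. There, Liouville arguments give $\det\check M^{(xt)}\equiv1$, uniqueness, and (via uniqueness) the symmetries, and existence follows from Zhou's vanishing lemma (positive definiteness of the jump on $\mathbb{R}$, plus the Schwarz-adjoint relation between $\begin{pmatrix}1&\Xi\\0&1\end{pmatrix}$ and $\begin{pmatrix}1&0\\\Xi^*&1\end{pmatrix}$ on the two semicircles) together with the index-zero Fredholm property of Lemma \ref{Lem 5.3}. The orientation issue you flag resolves favorably: in Figure \ref{fig:contour_RH_x} both semicircles run from $-\epsilon$ to $\epsilon$, so they are mirror images with matching orientation, and $\check J^{(xt)}(\bar k)=\overline{\check J^{(xt)}(k)}^{T}$ is exactly the condition the vanishing lemma requires.
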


\section{Recovering $u(x,t)$ from the solution of the RH problem}\label{sec:6}

In this subsection, we describe the derivation of a (local) solution to the sG equation (together with the derivation of the corresponding Lax pair equations), starting from a RH problem parametrized by $x$ and $t$, motivated by the considerations in Section \ref{sec:5}.

Let $\Gamma$ denote an oriented contour in the complex plane, invariant under the symmetries $k\mapsto-k$ and $k\mapsto\bar k$. Suppose that ${\mathbb C}\setminus\Gamma = D_1\cup D_2$ so that $\Gamma$ is the counterclockwise boundary of $D_1$ (the clockwise boundary of $D_2$).

Consider the following \textbf{RH problem parametrized by $x$ and $t$}: Given the $2\times 2$-matrix valued function $J_0(k)\in L^2(\Gamma)\cap L^{\infty}(\Gamma)$ for $k\in\Gamma$ such that 
\begin{subequations}\label{jump_cond}
    \begin{align}\label{det_jump}
    &\det J_0(k)\equiv 1,\\
    \label{jump_at_inf}
    &J_0(k)=I+O(\frac{1}{k}),\quad k\to\infty,\\
    \label{jump_at_0}
    &J_0(k)=I+O(k),\quad k\to0,
    \end{align}
\end{subequations}
(whenever the corresponding points lie on $\Gamma$)
and the sets $\{k_j,c_j\}_1^N$ with $k_j\in D_1$, and $\{\eta_j,d_j\}_1^M$ with $\eta_j\in D_2$,
find a piece-wise meromorphic (w.r.t. $\Gamma$) $2\times 2$-matrix valued function $\hat M(x,t,k)$ satisfying:
\begin{enumerate}[\textbullet]
\item
\emph{Jump} condition
\begin{equation}\label{jump-y_loc}
 M_+(x,t,k)=M_-(x,t,\lambda) J(x,t,k),\qquad k\in\Sigma,
\end{equation}
where  
$J(x,t,k)=\eul^{-ik(x-\frac{t}{4k^2})\sigma_3}J_0(k)\eul^{ik(x-\frac{t}{4k^2})\sigma_3}$.

\item  \emph{Normalization} condition:
\begin{equation}\label{norm-m-hat_loc}
\hat M(x,t,k)=I+\ord(\frac{1}{k}), \quad k\to\infty.
\end{equation}

\item \emph{Residue} conditions:
\begin{align}\label{res_hatM_loc}
\Res_{ k_j} M^{(1)}(x,t,k)&= c_j  M^{(2)}(x,t, k_j) \eul^{-2\ii  k_j (x-\frac{t}{4k_j^2})}, ~  k_j\in D_1, \\\label{res_hatM__loc}
\Res_{\eta_j} M^{(2)}(x,t,k)&=d_j  M^{(1)}(x,t,\eta_j)\eul^{2\ii \eta_j (x-\frac{t}{4\eta_j^2})} , ~ \eta_j\in D_2.
\end{align}
\end{enumerate}

The following Proposition holds:
\begin{proposition}\label{prop:det}
\begin{enumerate}
    \item  If $M$ is a solution of RH problem \eqref{jump-y_loc}--\eqref{res_hatM__loc}, then $\det  M=1.$

    \item  If a solution of the RH problem \eqref{jump-y_loc}--\eqref{res_hatM__loc} exists, it is unique.
\end{enumerate}

\end{proposition}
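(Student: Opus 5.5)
The plan is the standard Liouville argument, adapted to the residue conditions and to the essential singularity of the phase $\eul^{\ii k p(x,t,k)\sigma_3}$ at $k=0$.

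\emph{Step 1: $\det M\equiv 1$.} Consider $f(k):=\det M(x,t,k)$, which is piecewise meromorphic in $D_1\cup D_2$. Across $\Gamma$ the jump relation \eqref{jump-y_loc} gives $f_+=f_-\det J$. Since $J$ is a conjugation of $J_0$ by $\eul^{\pm\ii kp\sigma_3}$, \eqref{det_jump} yields $\det J\equiv1$, so $f$ has no jump on $\Gamma$.

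Next we check the possible poles. At $k_j\in D_1$ only the first column may have a pole, and by \eqref{res_hatM_loc} its residue is proportional to the second column evaluated at $k_j$. Write $M^{(1)}=\frac{c_j\eul^{-2\ii k_jp}M^{(2)}(k_j)}{k-k_j}+\text{hol}$. The singular part of $\det(M^{(1)},M^{(2)})$ is then $\frac{c\det(M^{(2)}(k_j),M^{(2)}(k))}{k-k_j}$. This quotient is holomorphic, because the numerator vanishes at $k=k_j$. The same argument applies at $\eta_j$ using \eqref{res_hatM__loc}. Hence $f$ is holomorphic off $\Gamma$, and by continuity of boundary values (with $L^2$ boundary values, via Morera's theorem) it is analytic across $\Gamma$ except possibly at the points $k=0$ and $k=\infty$.

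By \eqref{norm-m-hat_loc}, $f\to1$ at $\infty$. At $k=0$ the solution is assumed bounded (in the master problems it is of the form $P^{-1}(I+O(k))$, and $J_0=I+O(k)$ by \eqref{jump_at_0}), so $k=0$ is a removable singularity. Liouville's theorem then gives $f\equiv1$.

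\emph{Step 2: uniqueness.} Let $M_1,M_2$ be two solutions. By Step 1, $M_2^{-1}$ exists and is piecewise meromorphic. Set $N:=M_1M_2^{-1}$. On $\Gamma$ we have $N_+=M_{1-}JJ^{-1}M_{2-}^{-1}=N_-$, so $N$ has no jump. At $k_j$, note that $M_2^{-1}=\sigma_2M_2^T\sigma_2^{-1}$ up to transposition conventions; equivalently, the rows of $M_2^{-1}$ are $(M_2^{(22)},-M_2^{(12)})$ and $(-M_2^{(21)},M_2^{(11)})$. The residue condition can be encoded as $M=\hat M\bigl(I+\frac{c_j\eul^{-2\ii k_jp}}{k-k_j}E_{21}\bigr)$ with $\hat M$ holomorphic near $k_j$; one checks this by matching the first column. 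Then
\[
N=\hat M_1\Bigl(I+\tfrac{c}{k-k_j}E_{21}\Bigr)\Bigl(I-\tfrac{c}{k-k_j}E_{21}\Bigr)\hat M_2^{-1}=\hat M_1\hat M_2^{-1},
\]
which is holomorphic. The analogous factorization with $E_{12}$ handles the points $\eta_j$. Finally, $N\to I$ at infinity and $N$ is bounded near $0$, so Liouville's theorem gives $N\equiv I$, that is, $M_1=M_2$.

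\emph{Main obstacle.} The delicate point is the behavior at $k=0$ (and at $\infty$ on $\Gamma$). The phase factor $\eul^{\ii t\sigma_3/(4k)}$ has an essential singularity at $0$, so the argument needs the class of solutions to be bounded near $0$, with boundary values in $L^2$ locally, so that removability of isolated singularities and Morera's theorem across $\Gamma$ apply. I would make this explicit as part of the solution class, in the standard $L^2$-RH sense, and then argue that $f$ and $N$ are bounded near $0$ and hence removable there.
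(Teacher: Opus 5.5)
Your proposal is correct and follows the paper's route. The paper states Proposition~\ref{prop:det} without proof, but it proves the analogous Proposition~\ref{prop:Mx} in exactly this way: $\det M$ has no jump and no poles, so Liouville applies, and the same argument works for $M_1M_2^{-1}$. Your explicit factorization at the poles and your insistence on a regularity class ($L^2$ boundary values, boundedness at $k=0$) supply details the paper leaves tacit, and they are genuinely needed: when $0\in\Gamma$, the function $(I+A/k)M$ with a constant nilpotent $A$ would otherwise be a second solution.
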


Now, assume that the RH problem \eqref{jump-y_loc}--\eqref{res_hatM__loc} 
has a solution $M(x,t,k)$ that satisfies the \emph{symmetries}
\begin{equation}
 \label{sym_1}
 M(k)=\overline{ M(-\bar k)}
 \end{equation}
and 
 \begin{equation}
 \label{sym_2}
 M(k)=\begin{pmatrix}
    0&-\ii\\\ii&0
\end{pmatrix}\overline{ M(\bar k)}\begin{pmatrix}
    0&-\ii\\\ii&0
\end{pmatrix}
 \end{equation} 
and  is differentiable w.r.t. $x$ and $t$.

Now, our goal is to show that, by evaluating the solution $ M(x,t,k)$ of the RH problem \eqref{jump-y_loc}--\eqref{res_hatM__loc} at appropriate points of the extended complex plane $\overline{\mathbb C}$, one can obtain a solution of the sG equation.

We proceed as follows:
\begin{enumerate}[(a)]
\item 
Starting from $M(x,t,k)$, define $2\times 2$-matrix valued functions
\begin{equation}
    \label{psi}
    \Psi (x,t,k):=  M(x,t,\mu)\eul^{-ik(x-\frac{t}{4k^2})\sigma_3}
\end{equation}
 and show that $\Psi(x,t,k)$ satisfies the system of differential equations:
\begin{equation}\label{Lax-hat-hat}
\begin{split}
\Psi_x&=\hat{U}\Psi, \\
 \Psi_t&=\hat{V}\Psi,
\end{split}
\end{equation}
where $\hat{U}$ and $\hat{V}$ have the same (rational) dependence on $k$ as in \eqref{Lax-UV}, with coefficients given in terms of $ M(x,t,k)$ evaluated at appropriate values of $k$.
\item
Show that the compatibility condition for \eqref{Lax-hat-hat}, i.e., the equality $\hat{U}_t - \hat{V}_x + [\hat{U},\hat{V}]=0$, reduces to 
\eqref{sG}.
\end{enumerate}

\begin{proposition}\label{Prop_Lax_y}
    Let $M(x,t,k)$ be the solution of the RH problem \eqref{jump-y_loc}--\eqref{res_hatM__loc} that satisfies symmetries \eqref{sym_1} and \eqref{sym_2}. 
    Then $\Psi(x,t,k)$ defined by \eqref{psi} satisfies the differential equation
\begin{equation}\label{Lax_y_Psi}
    \hat\Psi_x=\hat{U}\hat\Psi
\end{equation}
with 
\begin{equation}\label{hat_hat_U}
    \hat{U}(x,t,k)=-\ii k \sigma_3 + \frac{\xi(x,t)}2{}\begin{pmatrix}
    0&1\\-1&0
\end{pmatrix},
\end{equation}
where 
$\xi(x,t)\in\mathbb{R}$ can be obtained from the large $k$ expansion of $M(x,t,k)$: 
\begin{equation}\label{xi-M}
 M(x,t,k)=I+\frac{1}{4\ii k}\begin{pmatrix}
    -\eta(x,t) & \xi(x,t)\\
   \xi(x,t)& \eta(x,t)
\end{pmatrix}+O(\frac{1}{k^2}),\qquad k\to\infty.
\end{equation}

\end{proposition}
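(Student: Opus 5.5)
The plan is the standard dressing argument: show that $\Psi_x\Psi^{-1}$ is entire in $k$, then identify it by Liouville's theorem. By \eqref{psi} and \eqref{jump-y_loc}, the jump of $\Psi$ across $\Gamma$ is $J_0(k)$, which does not depend on $x$ or $t$, since the conjugating exponentials are absorbed into $\Psi$. Hence $\Psi_{x}$ has the same jump as $\Psi$, and $\Psi_x\Psi^{-1}$ has no jump across $\Gamma$. Here $\Psi^{-1}$ exists because $\det M\equiv1$ by Proposition \ref{prop:det}.

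The next step is to check that the poles at $k_j$ and $\eta_j$ are removable. Multiplication by $\eul^{-\ii k p\sigma_3}$ turns the residue conditions \eqref{res_hatM_loc}--\eqref{res_hatM__loc} into $x$-independent ones: $\Res_{k_j}\Psi^{(1)}=c_j\Psi^{(2)}(k_j)$ and $\Res_{\eta_j}\Psi^{(2)}=d_j\Psi^{(1)}(\eta_j)$. Differentiating in $x$ shows that $\Psi_x$ satisfies the same conditions. Equivalently, near $k_j$ we can write $\Psi=\hat\Psi(k)\bigl(I+\frac{c_j}{k-k_j}E_{21}\bigr)$ with $\hat\Psi$ holomorphic, and the same holds for $\Psi_x$ with the same constant right factor. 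That factor then cancels in $\Psi_x\Psi^{-1}$, so this product is holomorphic at $k_j$ and $\eta_j$. The point $k=0$ also needs care. There $\Psi$ carries $\eul^{\ii t\sigma_3/(4k)}$, but its $x$-derivative only produces the factor $-\ii k\sigma_3$. Using \eqref{jump_at_0} and the boundedness of $M$ near $0$ (the data are regular there), $\Psi_x\Psi^{-1}=M_xM^{-1}-\ii k M\sigma_3M^{-1}$ is bounded near $k=0$, so $0$ is a removable singularity.

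With these facts in hand, $\Psi_x\Psi^{-1}=M_xM^{-1}-\ii k M\sigma_3M^{-1}$ is entire. Inserting the expansion $M=I+\frac{m_1}{k}+O(k^{-2})$ from \eqref{xi-M} gives $\Psi_x\Psi^{-1}=-\ii k\sigma_3-\ii[m_1,\sigma_3]+O(1/k)$, so Liouville's theorem yields $\hat U=-\ii k\sigma_3-\ii[m_1,\sigma_3]$. With $m_1=\frac{1}{4\ii}\begin{pmatrix}-\eta&\xi\\ \xi&\eta\end{pmatrix}$ the commutator is $[m_1,\sigma_3]=\frac{1}{4\ii}\begin{pmatrix}0&-2\xi\\2\xi&0\end{pmatrix}$, so that $-\ii[m_1,\sigma_3]=\frac{\xi}{2}\begin{pmatrix}0&1\\-1&0\end{pmatrix}$, which is \eqref{hat_hat_U}.

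It remains to justify that the $1/k$ coefficient of $M$ has the structure written in \eqref{xi-M}, with equal off-diagonal entries, opposite diagonal entries and real $\xi$. Write $m_1=\begin{pmatrix}\alpha&\beta\\ \gamma&\delta\end{pmatrix}$. Expanding \eqref{sym_2} at infinity gives $m_1=\sigma_2\overline{m_1}\sigma_2$, hence $\delta=\bar\alpha$ and $\gamma=-\bar\beta$. Expanding \eqref{sym_1} gives $m_1=-\overline{m_1}$, so all entries of $m_1$ are purely imaginary. Combining the two, $\gamma=-\bar\beta=\beta$ and $\delta=\bar\alpha=-\alpha$, which is exactly the form of \eqref{xi-M} with $\xi,\eta$ real.

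I expect the main obstacle to be the analytic justification rather than the algebra. One must know that $M$ is differentiable in $x$ with an asymptotic expansion that can be differentiated term by term, so that $M_x=O(1/k)$ at infinity. One also needs enough boundedness of $M$ and $M_x$ near $k=0$, and near the points where $\Gamma$ self-intersects (on $|k|=\epsilon$), to remove those isolated singularities. I would take this differentiability from the standing assumption, to be established in Section \ref{sec:7}, and use only the $L^2$ boundary-value sense of the jump together with a Morera-type argument.
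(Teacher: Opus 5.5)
Your overall scheme is the paper's: the jump of $\Psi$ is the $x$-independent $J_0$, the residue conditions become $x$-independent, so $\Psi_x\Psi^{-1}$ is entire and Liouville applies. Your handling of the poles and of $k=0$ is, if anything, more careful than the paper's.

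The gap is in your last paragraph. You posit one expansion $M=I+m_1/k+O(k^{-2})$ and expand \eqref{sym_2} to get $m_1=\sigma_2\overline{m_1}\sigma_2$. But $M$ is only piecewise analytic, and \eqref{sym_2} maps $\mathbb{C}^+$ onto $\mathbb{C}^-$. What it actually gives is $m_1^-=\sigma_2\overline{m_1^+}\sigma_2$, a relation between the coefficients in the two half-planes. The equality $m_1^+=m_1^-$ is not automatic here: the normalization is only $I+O(1/k)$, and $J_0-I$ is merely $O(1/k)$, not integrable.

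Assuming that equality is also circular. \eqref{sym_1} makes the entries of $m_1^\pm$ purely imaginary, $\det M\equiv1$ makes them traceless, and then \eqref{sym_2} reads
\[
m_1^-=\begin{pmatrix}(m_1^+)_{11}&(m_1^+)_{21}\\(m_1^+)_{12}&-(m_1^+)_{11}\end{pmatrix}.
\]
So $m_1^+=m_1^-$ is equivalent to $(m_1^+)_{12}=(m_1^+)_{21}$, which is exactly the relation $\gamma=\beta$ you are trying to prove. The symmetries alone cannot deliver it.

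The missing step uses something you already have. Expand $\Psi_x\Psi^{-1}$ separately in each half-plane:
\[
\Psi_x\Psi^{-1}=-\ii k\sigma_3-\ii[m_1^\pm,\sigma_3]+O(1/k).
\]
Since $\Psi_x\Psi^{-1}+\ii k\sigma_3$ is a bounded entire function, it is constant, so the two constants agree: $[m_1^+,\sigma_3]=[m_1^-,\sigma_3]$. This says the off-diagonal entries of $m_1^+$ and $m_1^-$ coincide.

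Combined with the display above, this gives $(m_1^+)_{12}=(m_1^+)_{21}$. Hence $m_1^+=m_1^-$ with real $\xi,\eta$, which is \eqref{xi-M}, and then \eqref{hat_hat_U} follows. Matching the $\mathbb{C}^+$ and $\mathbb{C}^-$ expansions of $\Psi_x\Psi^{-1}$ is exactly how the paper obtains $m_{12}^\infty=m_{21}^\infty$. The diagonal relation is most cleanly taken from $\det M\equiv1$.
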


\begin{proof}
     First, notice that $\Psi(x,t,k)$ satisfies the jump condition
\[
\Psi^+(x,t,k)=\Psi^-(x,t,k)J_0(k)
\]
with the jump matrix $J_0(k)$ independent of $x$. Hence, $\Psi_x(x,t,k)$ satisfies the same jump condition.

As for the residue conditions, we notice that the symmetry assumptions  
\eqref{sym_1} and \eqref{sym_2} imply that if $ k_j$ is pole of $\hat M^{(1)}$ then $-\bar k_j$ is also pole of $\hat M^{(1)}$ with $c_{ k_j}=-\overline{c_{-\bar k_j}}$. Analogously, if $\eta_j$ is pole of $\hat M^{(2)}$ then $-\bar\eta_j$ is also pole of $\hat M^{(2)}$ with $d_{\eta_j}=-\overline{d_{-\bar\eta_j}}$. Moreover,
\begin{equation}\label{res-sym}
    \eta_j=- k_j, \quad d_j=c_j.
\end{equation}
Since $\{c_j\}_1^N$ are independent of $x$, 
$\Psi_x(x,t,k)$ satisfies the same residue conditions as $\Psi(x,t,k)$ does:

\begin{align*}
\Res_{ k_j}\Psi^{(1)}&= c_j  \Psi^{(2)}( k_j) , ~  k_j\in D_1, \\
\Res_{- k_j} \Psi^{(2)}&= c_j\Psi^{(1)}(- k_j) , ~ - k_j\in D_2.
\end{align*}

Therefore,  $ \Psi_x  \Psi^{-1}= M_x M^{-1}-\ii k  M \sigma_3  M ^{-1}$   has neither jump nor singularities at $ k_j$, and thus it is a meromorphic function, with possible singularity at $k=\infty$.

Let us analyze the behavior of $ \Psi_x \Psi^{-1}$ as $k\to\infty$. First of all, notice that the symmetry \eqref{sym_1} implies
    \begin{equation}
    M = I+\frac{\ii}{k}\begin{pmatrix}
     m_{11}^\infty &  m_{12}^\infty\\
     m_{21}^\infty&  m_{22}^\infty
\end{pmatrix}+O(\frac{1}{k^2}), \quad k\in\mathbb{C}_+
   \end{equation}
with $\hat m_{+ij}^\infty \in\mathbb{R}$. Moreover, the symmetry \eqref{sym_2} implies
    \begin{equation}
    M = I-\frac{\ii}{k}\begin{pmatrix}
     m_{22}^\infty & - m_{21}^\infty\\
    - m_{12}^\infty&  m_{11}^\infty
\end{pmatrix}+O(\frac{1}{k^2}), \quad k\in\mathbb{C}_-.
   \end{equation}
Therefore, $
 M_x=O(\frac{1}{k})$  and thus 
 $M_x M^{-1}=O(\frac{1}{k})$, which leads to 
the following expansion for $\Psi_x\Psi^{-1}$:
\begin{equation}\label{Psi_y_Psi_inf_C+}
  \Psi_x \Psi^{-1}=-\ii k \sigma_3 + \begin{pmatrix}
     m_{11}^\infty+ m_{22}^\infty&-2 m_{12}^\infty\\2 m_{21}^\infty&- m_{11}^\infty- m_{22}^\infty
\end{pmatrix}+O(\frac{1}{k}),  \quad k\to \infty, \quad k\in\mathbb{C}_+,
\end{equation}
\begin{equation}\label{Psi_y_Psi_inf_C-}
  \Psi_x  \Psi^{-1}=-\ii k \sigma_3 - \begin{pmatrix}
     m_{11}^\infty+ m_{22}^\infty&2 m_{21}^\infty\\-2 m_{12}^\infty&- m_{11}^\infty- m_{22}^\infty
\end{pmatrix}+O(\frac{1}{k}),  \quad k\to \infty, \quad k\in\mathbb{C}_-.
\end{equation}
Since $\Psi_x  \Psi^{-1}$ is meromorphic in $\mathbb{C}$ these expansions should coincide, and thus, 
$ m_{11}^\infty=- m_{22}^\infty$ and $ m_{12}^\infty= m_{21}^\infty$.

Denoting $\eta=4 m_{11}$ and $\xi=-4m_{12}$, we get
\eqref{xi-M}. Therefore, Liouville's theorem yields \eqref{Lax_y_Psi}--\eqref{hat_hat_U}.
\end{proof}

\begin{proposition}\label{Prop_Lax_t}
$\Psi(x,t,k)$ defined in \eqref{psi} satisfies the differential equation
\begin{equation}\label{Lax_t_Psi}
    \Psi_t=\hat{V}\Psi
\end{equation}
with 
\begin{equation}\label{hat_hat_V}
    \hat{V}=-\frac{1}{4\ii k} \begin{pmatrix}
        \alpha^2-\beta^2&-2\alpha  \beta \\
        -2 \alpha \beta&-\alpha^2+\beta^2
    \end{pmatrix},
\end{equation}
where 
$ \alpha(x,t)\in\mathbb{R}$ and $\beta(x,t)\in\mathbb{R}$ can be obtained from the from the expansion of $ M(x,t,k)$ as $k\to 0$: 
\begin{equation}\label{alpha_beta-M}
     M(x,t,k)=\begin{pmatrix}
     \alpha(x,t) & \beta(x,t)\\
   -\beta(x,t)& \alpha(x,t)
\end{pmatrix}+
O(k),\quad k\to 0
\end{equation}
with 
\begin{equation}\label{det_alpha_beta}
    \alpha^2(x,t)+\beta^2(x,t)=1.
\end{equation}
\end{proposition}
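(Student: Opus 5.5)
We mirror the argument of Proposition~\ref{Prop_Lax_y}, with the roles of $k=\infty$ and $k=0$ interchanged. Since $\Psi = M\,\eul^{-\ii k(x-\frac{t}{4k^2})\sigma_3}$, the function $\Psi$ has the $t$-independent jump $\Psi_+=\Psi_-J_0(k)$. The residue conditions for $\Psi$ have $t$-independent coefficients $c_j$ and, by \eqref{res-sym}, $d_j=c_j$ at $\eta_j=-k_j$. Hence $\Psi_t$ satisfies the same jump and residue conditions as $\Psi$. Because $\det\Psi=1$ (Proposition~\ref{prop:det}), the product
\[
\Psi_t\Psi^{-1}=M_tM^{-1}+\frac{\ii}{4k}\,M\sigma_3M^{-1}
\]
has no jump across $\Gamma$ and removable singularities at $k_j,-k_j$. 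It is therefore meromorphic in $\overline{\mathbb C}$, with possible singularities only at $k=0$ and $k=\infty$.

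\textbf{Behavior at $\infty$.} From \eqref{xi-M}, $M_t=O(1/k)$ and $M\sigma_3M^{-1}=\sigma_3+O(1/k)$, so $\Psi_t\Psi^{-1}=O(1/k)$ and it vanishes at infinity.

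\textbf{Behavior at $0$.} Write $M(x,t,k)=M_0(x,t)+O(k)$ as $k\to0$, and assume this expansion is differentiable in $t$, so that $M_t=O(1)$. Then
\[
\Psi_t\Psi^{-1}=\frac{\ii}{4k}M_0\sigma_3M_0^{-1}+O(1).
\]
By Liouville's theorem, $\Psi_t\Psi^{-1}-\frac{\ii}{4k}M_0\sigma_3M_0^{-1}$ is entire and bounded, and it vanishes at $\infty$ because the pole term also decays there. Hence
\[
\Psi_t\Psi^{-1}=\frac{\ii}{4k}M_0\sigma_3M_0^{-1}=-\frac{1}{4\ii k}M_0\sigma_3M_0^{-1}.
\]

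\textbf{Structure of $M_0$.} This is the step that needs the most care. Apply \eqref{sym_2} at $k\to0$: since $\sigma_2\bar A\sigma_2$ maps $\begin{pmatrix}a&b\\c&d\end{pmatrix}$ to $\begin{pmatrix}\bar d&-\bar c\\-\bar b&\bar a\end{pmatrix}$, the symmetry gives $M_0^{11}=\overline{M_0^{22}}$ and $M_0^{12}=-\overline{M_0^{21}}$. Apply \eqref{sym_1} at $k\to0$: it gives $M_0=\overline{M_0}$, so $M_0$ is real; here one uses that the limit at $0$ is the same from both sides, which holds because $J_0(k)=I+O(k)$ by \eqref{jump_at_0}. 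Together these yield the form \eqref{alpha_beta-M} with $\alpha,\beta\in\mathbb R$. Finally, $\det M=1$ gives \eqref{det_alpha_beta}.

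\textbf{Direct computation.} With $M_0^{-1}=\begin{pmatrix}\alpha&-\beta\\ \beta&\alpha\end{pmatrix}$,
\[
M_0\sigma_3M_0^{-1}=\begin{pmatrix}\alpha^2-\beta^2&-2\alpha\beta\\-2\alpha\beta&\beta^2-\alpha^2\end{pmatrix},
\]
which is exactly \eqref{hat_hat_V}.

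The main technical point is justifying the expansion at $k=0$ together with its $t$-derivative, i.e.\ that $M_t=O(1)$ near $0$. The factor $\eul^{\ii t/(4k)}$ is oscillatory there, but the RH data near $0$ satisfy $J_0=I+O(k)$. We take this as part of the standing differentiability assumption, and the regularity is addressed in Section~\ref{sec:7}.
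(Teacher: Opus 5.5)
Your proposal is correct and follows the same route as the paper. You show that $\Psi_t\Psi^{-1}$ has no jumps or poles and is $O(1/k)$ at infinity, with a simple pole at $0$ of principal part $-\frac{1}{4\ii k}M_0\sigma_3M_0^{-1}$, then use \eqref{sym_1}, \eqref{sym_2} and $J_0=I+O(k)$ to get the form of $M_0$ before applying Liouville. One minor slip: the equality of the one-sided limits at $0$ is really needed for \eqref{sym_2}, since $k\mapsto\bar k$ swaps the half-planes, whereas $k\mapsto-\bar k$ in \eqref{sym_1} preserves them.
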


\begin{proof}
    Similarly to Proposition \ref{Prop_Lax_y}, we notice that $\Psi_t \Psi^{-1}=M_t M^{-1}+\frac{\ii }{4k} M\sigma_3M^{-1}$ has neither jump nor poles at $ k_j$, and thus it is a meromorphic function, with possible singularities at $k=\infty$ and $k=0$:

Evaluating $\Psi_t\Psi^{-1}$ near these points, we have the following.
\begin{enumerate}[(i)]

\item As $k\to\infty$, we have $ M M_t^{-1}=O(\frac{1}{k})$, $p_t(k)=O(\frac{1}{k})$ and thus
\begin{equation}\label{psi-inf-t}
\hat\Psi_t\hat\Psi^{-1}(k)=\ord(k^{-1}),\qquad k\to\infty.
\end{equation}

\item As $k\to 0$, first notice that \eqref{jump_at_0} together with the  symmetries \eqref{sym_1} and  \eqref{sym_2} implies \eqref{alpha_beta-M}, and the Proposition \ref{prop:det} yields \eqref{det_alpha_beta}.
Therefore,
\begin{equation}\label{Psi_t_Psi_0}
  \hat \Psi_t \hat \Psi^{-1}=-\frac{1}{4\ii k} \begin{pmatrix}
\hat \alpha^2- \hat \beta^2&-2 \hat \alpha \hat \beta\\-2 \hat \alpha \hat \beta&-\hat \alpha^2+\hat \beta^2
\end{pmatrix}+O(1),  \quad k\to 0.
\end{equation}
Combining \eqref{psi-inf-t} and \eqref{Psi_t_Psi_0} with the Liouville's theorem yields \eqref{Lax_t_Psi}--\eqref{alpha_beta-M}.
\end{enumerate}

\end{proof}

We now show that the compatibility condition
\begin{equation}\label{compat}
\hat{U}_t - \hat{V}_x + [\hat{U},\hat{V}]=0
\end{equation}
yields the sG equation.
For this purpose, substituting into \eqref{compat} the expressions for $\hat{U}$ and 
$\hat{V}$ from \eqref{hat_hat_U} and \eqref{hat_hat_V}, respectively, and
equating to $0$ the coefficients  at various expressions involving $k$ we get algebraic and differential equations amongst $ \alpha$, $ \beta$ and $ \xi$:
\begin{proposition}
Let $\alpha(x,t)$, $ \beta(x,t)$ and $\xi(x,t)$ be the functions determined in terms of $ M(x,t,k)$ as in Propositions \ref{Prop_Lax_y} and \ref{Prop_Lax_t}. Then they satisfy the following equations:
\begin{subequations}\label{rel}
\begin{align}\label{sys_rec1_}
&\xi_t- 2\alpha \beta=0,\\\label{sys_rec3_}
&\alpha \alpha_x-\beta \beta_x+ \xi \beta  \alpha=0,\\\label{sys_rec4_}
&2\alpha_x \beta+2\alpha \beta_x- \xi( \alpha^2- \beta^2)=0.
\end{align}
\end{subequations}
\end{proposition}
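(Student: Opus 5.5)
The plan is to derive the three relations directly from the zero-curvature condition \eqref{compat}. First I will justify that condition. By Propositions \ref{Prop_Lax_y} and \ref{Prop_Lax_t}, $\Psi$ satisfies both $\Psi_x=\hat U\Psi$ and $\Psi_t=\hat V\Psi$, and $\det\Psi=\det M=1$ by Proposition \ref{prop:det}, so $\Psi$ is invertible. Since $M$ is assumed differentiable in $x$ and $t$, and I will additionally assume the mixed derivatives $\Psi_{xt}=\Psi_{tx}$ exist and agree, we get
\[
(\hat U_t-\hat V_x+[\hat U,\hat V])\Psi=0,
\]
and hence \eqref{compat} holds for all $k$ off the contour.

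I will then substitute \eqref{hat_hat_U} and \eqref{hat_hat_V} into \eqref{compat}. To organize the computation I set
\[
\mathcal J=\begin{pmatrix}0&1\\-1&0\end{pmatrix},\qquad W=\begin{pmatrix}c&-s\\-s&-c\end{pmatrix},\qquad c=\alpha^2-\beta^2,\quad s=2\alpha\beta,
\]
so that $\hat U=-\ii k\sigma_3+\frac{\xi}{2}\mathcal J$ and $\hat V=-\frac{1}{4\ii k}W$. The left-hand side of \eqref{compat} is a Laurent polynomial in $k$ with only the powers $k^0$ and $k^{-1}$. Identifying coefficients is legitimate because it vanishes on an open set of $k$. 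The required commutators are $[\sigma_3,W]=-2s\,\mathcal J$ and $[\mathcal J,W]=\begin{pmatrix}-2s&-2c\\-2c&2s\end{pmatrix}$.

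\emph{Order $k^0$.} This collects $\hat U_t=\frac{\xi_t}{2}\mathcal J$ and $[-\ii k\sigma_3,\hat V]=\frac14[\sigma_3,W]=-\frac{s}{2}\mathcal J$. Their sum is $\frac12(\xi_t-2\alpha\beta)\mathcal J=0$, which gives \eqref{sys_rec1_}.

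\emph{Order $k^{-1}$.} This collects $-\hat V_x$ and $[\frac{\xi}{2}\mathcal J,\hat V]$, which together give $\frac{1}{4\ii k}\bigl(W_x-\frac{\xi}{2}[\mathcal J,W]\bigr)=0$. Reading off entries:
\begin{itemize}
\item the $(11)$ entry is $c_x+\xi s=2(\alpha\alpha_x-\beta\beta_x+\xi\alpha\beta)=0$, which is \eqref{sys_rec3_};
\item the $(12)$ entry is $-s_x+\xi c=-(2\alpha_x\beta+2\alpha\beta_x)+\xi(\alpha^2-\beta^2)=0$, which is \eqref{sys_rec4_}.
\end{itemize}
The $(21)$ and $(22)$ entries reproduce these two relations, by the symmetric structure of $W$ and of $[\mathcal J,W]$.

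I expect the main obstacle to be the justification step rather than the algebra. One must ensure $M$ is regular enough in $(x,t)$ that $\Psi$ is $C^2$ with equal mixed derivatives, which is the regularity deferred to Section \ref{sec:7}. One must also ensure that $\alpha$, $\beta$, $\xi$ are differentiable, so that the coefficient identification is meaningful. I will take that regularity as given by the standing differentiability assumption on $M$.
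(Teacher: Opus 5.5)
Your proposal is correct and follows the same route as the paper: substitute \eqref{hat_hat_U} and \eqref{hat_hat_V} into the zero-curvature condition \eqref{compat} and equate the coefficients of $k^0$ and $k^{-1}$, and your commutators and entry-by-entry identifications all check out. The paper takes \eqref{compat} for granted, so your derivation of it from $\Psi_{xt}=\Psi_{tx}$ and $\det\Psi=1$, with its caveat about $C^2$ regularity, adds a justification the paper omits rather than changing the method.
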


\begin{proposition}
    \label{reduce} 
Introducing $u(x,t)$ up to $2\pi k$ such that
\begin{equation}\label{hmbbgg}
 \sin \frac{u}{2} = \beta, \qquad \cos \frac{u}{2} =\alpha
\end{equation}
or
\begin{equation}\label{hmbbgg_}
 \sin \frac{u}{2} = -\beta, \qquad \cos \frac{u}{2} =-\alpha
\end{equation}
 equations  \eqref{sys_rec1_}-- \eqref{sys_rec4_} reduce to \eqref{sG}. 
\end{proposition}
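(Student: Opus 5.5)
Substituting the parametrization \eqref{hmbbgg} into the system \eqref{sys_rec1_}--\eqref{sys_rec4_} and using the half-angle identities turns each equation into a statement about $u$. The key steps, in order, are as follows.

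First, the existence of a (locally smooth) $u$ with \eqref{hmbbgg} is guaranteed by \eqref{det_alpha_beta}: the point $(\alpha,\beta)$ lies on the unit circle and depends differentiably on $(x,t)$ by the assumed differentiability of $M$. Hence a continuous lift $u/2$ exists, unique up to $2\pi\mathbb Z$, i.e. $u$ is defined up to $4\pi k$. Choosing \eqref{hmbbgg_} instead shifts $u$ by $2\pi$, so $u$ is determined modulo $2\pi$. Since every expression below is quadratic in $(\alpha,\beta)$, the two choices give identical equations, and we may treat \eqref{hmbbgg} only.

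Next, substitute. We have $2\alpha\beta=\sin u$ and $\alpha^2-\beta^2=\cos u$. Differentiating, $\alpha_x=-\tfrac{u_x}{2}\beta$ and $\beta_x=\tfrac{u_x}{2}\alpha$. Then:
\begin{itemize}
\item \eqref{sys_rec3_} becomes $-\tfrac{u_x}{2}\alpha\beta-\tfrac{u_x}{2}\alpha\beta+\xi\alpha\beta=\alpha\beta(\xi-u_x)=0$.
\item \eqref{sys_rec4_} becomes $u_x(\alpha^2-\beta^2)-\xi(\alpha^2-\beta^2)=(u_x-\xi)\cos u=0$.
\end{itemize}
Together these give $(\xi-u_x)\sin u=0$ and $(\xi-u_x)\cos u=0$. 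Since $\sin u$ and $\cos u$ never vanish simultaneously, we conclude $\xi=u_x$. Finally, \eqref{sys_rec1_} reads $\xi_t=2\alpha\beta=\sin u$, hence $u_{xt}=\sin u$, which is \eqref{sG}.

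The main point needing care is the first step: justifying a differentiable choice of $u$, and checking that the $2\pi k$ ambiguity does not affect the derivative identities. I would handle it through the local lifting argument above and then extend along the $(x,t)$ domain, which is simply connected. A lesser check is the orientation convention: matching $\xi=u_x$ with \eqref{inf_M_(xt)} confirms that the sign choice in \eqref{hmbbgg} is consistent with \eqref{i_beh-M_(xt)}.
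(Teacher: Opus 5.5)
Your proposal is correct and follows the paper's proof. Substituting the half-angle parametrization turns \eqref{sys_rec3_} into $(\xi-u_x)\sin u=0$ and \eqref{sys_rec4_} into $(\xi-u_x)\cos u=0$, which forces $\xi=u_x$; then \eqref{sys_rec1_} is exactly $u_{xt}=\sin u$. Your additional justification of a $C^1$ lift of $(\alpha,\beta)$ on the unit circle, and your observation that the system is quadratic and hence invariant under $(\alpha,\beta)\mapsto(-\alpha,-\beta)$, fill in points the paper leaves implicit.
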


\begin{proof}
First notice that \eqref{hmbbgg} implies
\[
\sin u=2\alpha\beta,\qquad \cos u=\alpha^2-\beta^2.
\]
Moreover, since $\alpha^2+\beta^2=1$, $\alpha$ and $\beta$ cannot be simultaneously $0$.

Combining these facts with \eqref{sys_rec4_} (in case $\alpha^2-\beta^2\neq 0$) or  \eqref{sys_rec3_} (in case $2\alpha\beta\neq 0$), we conclude that 
\[
\xi=u_x.
\]

Consequently, \eqref{sys_rec1_} implies \eqref{sG}.
\end{proof}

\section{Differentiability of solution of master RH problems}\label{sec:7}

\subsection{Basic facts about $L^2$-RH problems.}

For our purposes, we will need several results concerning the equivalence between
$L^2$ Riemann--Hilbert problems (whose jump matrices may be discontinuous at
intersection points of the contour) and singular integral equations; see, for
example, \cites{L18,ZH89}. We collect the necessary definitions and results
below.

Throughout this section, $\Gamma$ denotes an oriented contour consisting of a
finite union of piecewise $C^1$ arcs. The arcs may intersect only at their
endpoints, and all intersections are assumed to be transversal. In particular,
this class of contours includes configurations such as the union of the real
line with finitely many circles.

The orientation of $\Gamma$ is chosen so that
\[
\Gamma=\partial D_+=-\partial D_-,
\]
where $\mathbb{C}\setminus\Gamma=D_+\cup D_-$.

For a bounded component $D$ of $\mathbb{C}\setminus\Gamma$, denote by $E^2(D)$
the class of analytic functions $f$ in $D$ for which there exists a sequence of
rectifiable Jordan curves $\{C_n\}\subset D$ such that for every compact set
$D_c\subset D$ there exists $N$ so that $C_n$ surrounds $D_c$ for all $n>N$ and
\[
\sup_{n\ge1}\int_{C_n}|f(z)|^2|dz|<\infty .
\]

If $D$ is unbounded, define $f\in E^2(D)$ if
$f\circ\phi^{-1}\in E^2(\phi(D))$, where
\[
\phi(z)=\frac{1}{z-z_0}
\]
for some $z_0\in\mathbb{C}\setminus\overline{D}$. We also write
\[
\dot E^2(D)=\{f\in E^2(D):zf(z)\in E^2(D)\},
\]
that is, $f(z)\to0$ as $z\to\infty$.

Denote by $\mathcal{B}(L^2(\Gamma))$ the space of bounded linear operators on
$L^2(\Gamma)$ and by $\mathcal{F}(L^2(\Gamma))$ the set of Fredholm operators
on $L^2(\Gamma)$. The index map
\[
\mathrm{Ind}:\mathcal{F}(L^2(\Gamma))\to\mathbb{Z}
\]
is constant on the connected components of $\mathcal{F}(L^2(\Gamma))$.

For $h\in L^2(\Gamma)$ the Cauchy transform is defined by
\begin{equation}\label{Cauchy}
(Ch)(\lambda)=\frac{1}{2\pi i}\int_\Gamma \frac{h(z)}{z-\lambda}\,dz .
\end{equation}
The non-tangential limits of $Ch$ as $\lambda$ approaches $\Gamma$ from the
left and right exist and will be denoted by $C_+h$ and $C_-h$, respectively.
The operators $C_\pm$ are bounded on $L^2(\Gamma)$ and satisfy
\[
C_+-C_-=\mathds{1}, \quad  (C_+)^2=C_+,\quad  (C_-)^2=-C_-, \quad C_+C_-=C_-C_+=0.
\]

For functions $w^\pm\in L^2(\Gamma)\cap L^\infty(\Gamma)$ define the operator
$C_w:L^2(\Gamma)+L^\infty(\Gamma)\to L^2(\Gamma)$ by
\begin{equation}\label{C-w}
C_w(f)=C_+(fw^-)+C_-(fw^+).
\end{equation}
Moreover,
\begin{subequations}
\begin{alignat}{4}\label{C_w_ess_1}
||C_w||_{\mathcal{B}(L^2(\Gamma))}&\leq C \max\{||w^+||_{L^\infty}(\Gamma),||w^-||_{L^\infty}(\Gamma)\},\\\label{C_w_ess_2}
||C_w h||_{L^2(\Gamma)}&\leq C ||h||_{L^\infty(\Gamma)} \max\{||w^+||_{L^2(\Gamma)},||w^-||_{L^2(\Gamma)}\}
\end{alignat}
with $C=2\max \{||C_+||_{\mathcal{B}(L^2(\Gamma))},||C_-||_{\mathcal{B}(L^2(\Gamma))}\}$.
\end{subequations}
where
\[
C=2\max\{\|C_+\|_{\mathcal{B}(L^2(\Gamma))},\|C_-\|_{\mathcal{B}(L^2(\Gamma))}\}.
\]

The following lemma (\cites{L18}) establishes the equivalence between the
$L^2$ Riemann--Hilbert problem determined by $(\Gamma,v)$ and the singular
integral equation for $\mu\in I+L^2(\Gamma)$
\begin{equation}\label{mu_int_eq}
\mu-I=C_w(\mu).
\end{equation}

\begin{lemma}\label{Lem 5.2}
Given $v^\pm:\Gamma\to GL(n,\mathbb{C})$. Let $v=(v^-)^{-1}v^+$, $w^+=v^+-I$, $w^-=I-v^-$. Suppose $v^\pm$, $(v^\pm)^{-1}\in I + L^2(\Gamma)\cap L^\infty(\Gamma)$. If $m\in I + \dot E^2(D)$ satisfies $L^2-$RH problem determined by $(\Gamma,v(\lambda))$, then $\mu = m_+(v^+)^{-1}= m_-(v^-)^{-1}\in I+L^2(\Gamma)$ satisfies \eqref{mu_int_eq}. Conversely, if $\mu \in I+L^2(\Gamma)$ satisfies \eqref{mu_int_eq}, then $m=I+C(\mu(w^+ + w^-))\in I + \dot E^2(D)$ satisfies $L^2-$RH problem determined by $(\Gamma,v(\lambda))$.
\end{lemma}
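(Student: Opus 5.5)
The plan is to prove the two directions separately, using only the Plemelj identity $C_+-C_-=\mathds{1}$ and the fact that functions in $\dot E^2(D)$ are Cauchy integrals of their boundary values. Throughout, $v=(v^-)^{-1}v^+$ with $w^\pm$ as in the statement, and $m\in I+\dot E^2(D)$ solves the $L^2$-RH problem in the sense $m_+=m_-v$ a.e. on $\Gamma$.

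\emph{Direct part.} Set $\mu:=m_+(v^+)^{-1}$. The jump relation gives $m_+(v^+)^{-1}=m_-(v^-)^{-1}$, so $\mu$ is well defined. Since $m_\pm-I\in L^2(\Gamma)$ and $(v^\pm)^{-1}\in I+L^2\cap L^\infty$, we get
\[
\mu-I=(m_+-I)(v^+)^{-1}+\bigl((v^+)^{-1}-I\bigr)\in L^2(\Gamma).
\]
Next, $m_+-m_-=\mu(v^+-v^-)=\mu(w^++w^-)$. The key structural fact is that for $f\in\dot E^2(D)$ one has $f=C(f_+-f_-)$. I would prove this component by component: on each component of $D$ the Cauchy representation holds for $E^2$ functions (Smirnov class), and the vanishing at infinity removes the constant. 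Hence
\[
m-I=C\bigl(\mu(w^++w^-)\bigr),
\]
and taking boundary values gives
\[
m_\pm-I=C_\pm\bigl(\mu w^+\bigr)+C_\pm\bigl(\mu w^-\bigr).
\]
Now compute $\mu=m_+(v^+)^{-1}$ in another form: $\mu=m_+-\mu w^+$, and likewise $\mu=m_-+\mu w^-$. Using $m_+=I+C_+(\mu w^+)+C_+(\mu w^-)$ together with $C_+-C_-=\mathds{1}$, so that $C_+(\mu w^+)-\mu w^+=C_-(\mu w^+)$, we obtain
\[
\mu=I+C_-(\mu w^+)+C_+(\mu w^-)=I+C_w\mu.
\]
The products $\mu w^\pm$ lie in $L^2$, because $\mu\in I+L^2$ and $w^\pm\in L^2\cap L^\infty$, so every term is justified.

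\emph{Converse.} Given $\mu\in I+L^2$ solving \eqref{mu_int_eq}, define
\[
m:=I+C\bigl(\mu(w^++w^-)\bigr).
\]
Since $\mu(w^++w^-)\in L^2(\Gamma)$, the Cauchy transform of an $L^2(\Gamma)$ function lies in $\dot E^2(D)$; this is a standard fact for Carleson-type contours such as ours, a finite union of $C^1$ arcs with transversal intersections. Plemelj then gives
\[
m_+=I+C_+(\mu w^+)+C_+(\mu w^-)=I+\mu w^++C_w\mu=\mu+\mu w^+=\mu v^+,
\]
and similarly $m_-=\mu v^-$. Therefore $m_+=m_-(v^-)^{-1}v^+=m_-v$.

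\emph{Main obstacle.} The delicate step is the representation $f=C(f_+-f_-)$ for $f\in\dot E^2(D)$, together with the mapping $C:L^2(\Gamma)\to\dot E^2(D)$, on contours with self-intersections. Here non-tangential limits and Smirnov-class arguments must be handled near the nodes, where $v$ may be discontinuous. I would not reprove these; instead I would cite them from \cites{L18,ZH89}, reducing unbounded components to bounded ones via $\phi(z)=1/(z-z_0)$ as in the definition of $E^2$. Only the algebraic manipulation above is new.
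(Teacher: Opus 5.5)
Your argument is correct and follows the standard route. The paper gives no proof of its own and simply cites \cites{L18}, where the lemma is proved in essentially your way: the Cauchy representation $f=C(f_+-f_-)$ for $f\in\dot E^2(D)$, the mapping $C:L^2(\Gamma)\to\dot E^2(D)$, and the Plemelj identity $C_+-C_-=\mathds{1}$. Your algebra checks out in both directions, including $\mu=m_+-\mu w^+=m_-+\mu w^-$ in the direct part and $m_\pm=\mu v^\pm$ in the converse.
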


The following lemma (\cites{L18}) shows that operator $\mathds{1}-C_w$ is Fredholm.

\begin{lemma}\label{Lem 5.3}
Given $v^\pm:\Gamma\to GL(n,\mathbb{C})$. Let $v=(v^-)^{-1}v^+$, $w^+=v^+-I$, $w^-=I-v^-$. Suppose $v^\pm$, $(v^\pm)^{-1}\in I + L^2(\Gamma)\cap L^\infty(\Gamma)$, and $v^\pm$ is piecewise continuous on $\Gamma$. Then
\begin{enumerate}[(1)]
    \item The operator $\mathds{1}-C_w:L^2(\Gamma)\to L^2(\Gamma)$ is Fredholm.
    \item If $w^\pm$ are nilpotent, then $\mathds{1}-C_w$ has Fredholm index $0$; in this case TFAE:
    \begin{enumerate}[(a)]
        \item  The map $\mathds{1}-C_w:L^2(\Gamma)\to L^2(\Gamma)$ is bijective.
        \item The $L^2-$RH problem determined by $(\Gamma,v(\lambda))$ has a unique solution.
        \item The homogeneous $L^2-$RH problem determined by $(\Gamma,v(\lambda))$ has only the zero solution.
        \item The map $\mathds{1}-C_w:L^2(\Gamma)\to L^2(\Gamma)$ is injective.
    \end{enumerate}
\end{enumerate}
\end{lemma}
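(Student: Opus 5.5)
The plan is to prove the two parts in order, following the standard approach of Zhou's $L^2$ theory.

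\emph{Part (1): Fredholmness.} First I will show that $C_w$ is bounded on $L^2(\Gamma)$; this follows from \eqref{C_w_ess_1}, since $w^\pm\in L^\infty(\Gamma)$. Next I will build an explicit approximate inverse $\mathds{1}-C_{w'}$. Its weights come from the inverse factorization
\[
(v^+)^{-1}=I-w'^{+},\qquad (v^-)^{-1}=I+w'^{-},
\]
that is, from the data $\bigl((v^+)^{-1},(v^-)^{-1}\bigr)$ factorizing $v^{-1}$. Using the algebraic identities $C_+-C_-=\mathds{1}$, $C_\pm^2=\pm C_\pm$ and $C_+C_-=C_-C_+=0$, I will compute $(\mathds{1}-C_w)(\mathds{1}-C_{w'})$. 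This product should reduce to $\mathds{1}$ plus a finite sum of commutator terms of the form $[C_\pm, f]$, each multiplied by bounded functions, with $f$ an entry of $v^\pm$ or $(v^\pm)^{-1}$.

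The main obstacle is showing that these commutators are compact. For continuous $f$ that vanish at infinity, $[C_\pm,f]$ is compact, because it is an integral operator with a kernel that is nonsingular up to a small-norm error. For piecewise continuous $f$, as on the real line together with the circle $|k|=\epsilon$, the jumps of $f$ at the intersection points destroy compactness. There I would first multiply by smooth cutoffs that localize near each node. Away from the nodes the commutators are compact. Near each node I would reduce to the model case of a finite union of rays, and handle it by Mellin transform in the way Zhou and Lenells do. I would then check that the resulting local operators are invertible modulo compacts, which gives left and right Fredholm regularizers. If this piecewise case becomes too heavy, I would simply cite \cite{L18}, as the statement itself indicates.

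\emph{Part (2): index zero and the equivalences.} When $w^\pm$ are nilpotent, the family $\mathds{1}-C_{sw}$, $s\in[0,1]$, consists of Fredholm operators: nilpotency gives $v_s^\pm=I\pm sw^\pm$ with inverses $I\mp sw^\pm$, which still lie in $I+L^2\cap L^\infty$, so part (1) applies for each $s$. The family is norm-continuous in $s$, and at $s=0$ it equals $\mathds{1}$. By homotopy invariance of the index, $\mathrm{Ind}(\mathds{1}-C_w)=0$.

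The equivalence (a)$\Leftrightarrow$(d) then holds because a Fredholm operator of index zero is injective exactly when it is surjective. For (a)$\Leftrightarrow$(b), I will use Lemma~\ref{Lem 5.2}, which puts solutions $m$ of the RH problem in bijection with solutions $\mu\in I+L^2(\Gamma)$ of \eqref{mu_int_eq}. Equation \eqref{mu_int_eq} is equivalent to $(\mathds{1}-C_w)(\mu-I)=C_w I$, and $C_wI\in L^2$ by \eqref{C_w_ess_2}. So the RH problem is uniquely solvable exactly when this equation has a unique solution for the given right-hand side, and by index zero that is equivalent to bijectivity. Finally, for (c)$\Leftrightarrow$(d), the homogeneous RH problem corresponds through the same Lemma~\ref{Lem 5.2} construction, without the $I$ normalization, to $\ker(\mathds{1}-C_w)$: $\mu\mapsto C(\mu(w^++w^-))$ maps kernel elements to homogeneous solutions in $\dot E^2$, and $m\mapsto m_+(v^+)^{-1}$ maps back.
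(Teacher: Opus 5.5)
\paragraph{Verdict.} The proposal has a genuine gap, in Part~(1). The paper gives no proof of Lemma~\ref{Lem 5.3}; it is quoted from \cite{L18}. Your Part~(2) is the standard argument and is correct once Fredholmness with index zero is available.

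\paragraph{The gap in Part~(1).} The failing step is ``check that the resulting local operators are invertible modulo compacts''. Piecewise continuity does not make this check succeed; in fact Part~(1) is false under the hypotheses as written.

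Take $n=1$, $\Gamma=\mathbb{R}\cup\{|k|=\epsilon\}$, $\Gamma_0=(-\epsilon,\epsilon)$, $v^-\equiv 1$ and $v^+=1-2\chi_{\Gamma_0}$. All hypotheses hold; $v^\pm$ are even constant on every arc. Here $(\mathds{1}-C_w)f=f+2C_-(\chi_{\Gamma_0}f)$. With respect to $L^2(\Gamma)=L^2(\Gamma_0)\oplus L^2(\Gamma\setminus\Gamma_0)$ this operator is lower block-triangular. Its second diagonal block is the identity, and its first is
\[
f_0\mapsto \bigl(f_0+2C_-f_0\bigr)\big|_{\Gamma_0}=\bigl(C_+f_0+C_-f_0\bigr)\big|_{\Gamma_0},
\]
that is, the finite Hilbert transform on $L^2(\Gamma_0)$. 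That operator is injective with dense but non-closed range. Hence $\mathds{1}-C_w$ is injective but not bounded below, so it is not Fredholm. (The local exponents at $k=\pm\epsilon$ are $\pm\frac12$, exactly the $L^2$ borderline.)

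Nilpotency does not rescue this. Since $(\mathds{1}-C_w)\phi=C_+(\phi v^-)-C_-(\phi v^+)$, the substitution $\psi=\phi v^-$ shows that Fredholmness depends only on $v$. Take $v^+=\begin{pmatrix}1&0\\2\chi_{\Gamma_0}&1\end{pmatrix}$ and $v^-=\begin{pmatrix}1&2\chi_{\Gamma_0}\\0&1\end{pmatrix}$. Then $v=\begin{pmatrix}-3&-2\\2&1\end{pmatrix}$ on $\Gamma_0$, which is conjugate by a constant matrix to the Jordan block with eigenvalue $-1$. After this conjugation the operator is block-triangular with the scalar operator above on the diagonal. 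So it is again not Fredholm, and the index claim in~(2) fails as well.

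\paragraph{What is needed to repair it.} Your localization needs an extra hypothesis on the one-sided limits of $v$ wherever $v$ is discontinuous.
\begin{itemize}
\item At a point $t_0$ interior to an arc, $v(t_0-0)^{-1}v(t_0+0)$ must have no eigenvalue in $(-\infty,0)$ (in $L^2$).
\item At a node one needs the analogous condition on the limiting jump matrices. It holds, for instance, when they satisfy the cyclic relation, i.e.\ their suitably oriented product around the node is $I$. The jumps of \textbf{RH$^{(xt)}$} satisfy this at $k=\pm\epsilon$, because they come from a function with bounded boundary values there.
\end{itemize}
The Mellin analysis you propose is what produces such a condition; it cannot derive it from piecewise continuity.

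This hypothesis must also hold along your homotopy. Part~(1) does not automatically apply to $v_s=(I-sw^-)^{-1}(I+sw^+)$ for every $s\in[0,1]$ just because it applies at $s=1$. For the triangular factors \eqref{v+}--\eqref{v-} this is fine: there the cyclic relation at $k=\pm\epsilon$ is a linear identity among the off-diagonal entries (essentially $\tilde r=r-\kappa_2^0/(a\tilde a)$), which is invariant under $w\mapsto sw$. In general it has to be verified. So falling back on \cite{L18}, as the paper does, is sound only together with the precise conditions at nodes and discontinuities under which the Fredholm property is proved there.

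\paragraph{A minor slip.} Write $R_g\phi=\phi g$. Then $\mathds{1}-C_w=C_+R_{v^-}-C_-R_{v^+}$, and its parametrix is $C_+R_{(v^-)^{-1}}-C_-R_{(v^+)^{-1}}=\mathds{1}-C_{w'}$ with $w'^+=(v^+)^{-1}-I$ and $w'^-=I-(v^-)^{-1}$. Your $w'$ has the opposite sign.
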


\subsection{Problem I}\label{sec:7.2}
We assume that $a(k)$ has no zeros in $\overline{\mathbb{C}^+}$. Zeros in the open upper half-plane can be eliminated by introducing small circular contours and converting the residue conditions into jump conditions; zeros on $\mathbb{R}$ could be eliminated by introducing small half-circular contours.

First, notice that the jump matrix \eqref{jump_M_(xt)_matr} can be written in the form suitable for Lemma \ref{Lem 5.2} and \ref{Lem 5.3}. Indeed, $[J^{(xt)}]^{(-1)}(x,t,k)=(v^-)^{-1}v^+$ with
\begin{equation}\label{v+}
v^-(x,t,k)=\begin{cases}
       \begin{pmatrix}
       1&-r^*(k)\eul^{-2\ii kp(x,t,k)}\\0&1
   \end{pmatrix},\quad k\in\mathbb{R}\cap\{|k| >\epsilon\},\\
   \begin{pmatrix}
       1&0\\\tilde r(k)\eul^{2\ii kp(x,t,k)}&1
   \end{pmatrix},\quad k\in\mathbb{R}\cap\{|k| <\epsilon\},\\
   I,\quad k\in\mathbb{C}^+\cap\{|k| =\epsilon\},\\
   \begin{pmatrix}
       1&-\frac{\kappa_2^0\eul^{-2\ii kp(x,t,k)}}{a^*(k)\tilde a^*(k)}\\
       0&1
   \end{pmatrix},\quad k\in\mathbb{C}^-\cap\{|k| =\epsilon\}
   \end{cases} 
\end{equation}
and
\begin{equation}\label{v-}
v^+(x,t,k)=\begin{cases}
       \begin{pmatrix}
       1&0\\r(k)\eul^{2\ii kp(x,t,k)}&1
   \end{pmatrix},\quad k\in\mathbb{R}\cap\{|k| >\epsilon\},\\
   \begin{pmatrix}
       1&-\tilde r^*(k)\eul^{-2\ii kp(x,t,k)}\\0&1
   \end{pmatrix},\quad k\in\mathbb{R}\cap\{|k| <\epsilon\},\\
   \begin{pmatrix}
1&0\\\frac{\kappa_2^0\eul^{2\ii kp(x,t,k)}}{a(k)\tilde a(k)}&1
   \end{pmatrix},\quad k\in\mathbb{C}^+\cap\{|k| =\epsilon\},\\
   I,\quad k\in\mathbb{C}^-\cap\{|k| =\epsilon\}.
   \end{cases} 
\end{equation}
Moreover, from the considerations in Section \ref{sec:2.1}, it follows that $r(k)$,  $\tilde r(k)$ and $\frac{1}{a(k)\tilde a(k)}$ satisfy the following properties:
\begin{align}\label{prop_r_1}
  &r(k)=O\left(\frac{1}{k}\right),\quad k\in\mathbb{R},~  k\to\infty;\quad  r(k)\in C(\mathbb{R}),\\
  \label{prop_r_2}
  &\tilde r(k)=O(k) ,\quad k\in\mathbb{R},~ k\to 0;\quad  r(k)\in C(\mathbb{R}),\\
\label{prop_r_3}
  &\frac{1}{a(k)\tilde a(k)}\in C(\{|k|=\epsilon\}),\quad k\in\mathbb{C}^+.
\end{align}
Therefore, using $\det v^\pm=1$, we conclude that
\[
v^\pm,v_\pm^{-1}\in I + L^2(\Sigma)\cap L^\infty(\Sigma).
\]
Hence, Lemma \ref{Lem 5.2} applies and yields the equivalence between the
RH problem \textbf{RH$^{(xt)}$} and the associated singular integral equation.

Accordingly, $w^\pm$ are given by
\begin{equation}\label{w+}
w^-(x,t,k)=\begin{cases}
       \begin{pmatrix}
       0&-r^*(k)\eul^{-2\ii kp(x,t,k)}\\0&0
   \end{pmatrix},\quad k\in\mathbb{R}\cap\{|k| >\epsilon\},\\
   \begin{pmatrix}
       0&0\\\tilde r(k)\eul^{2\ii kp(x,t,k)}&0
   \end{pmatrix},\quad k\in\mathbb{R}\cap\{|k| <\epsilon\},\\
   0,\quad k\in\mathbb{C}^+\cap\{|k| =\epsilon\},\\
   \begin{pmatrix}
       0&-\frac{\kappa_2^0\eul^{-2\ii kp(x,t,k)}}{a^*(k)\tilde a^*(k)}\\
       0&0
   \end{pmatrix},\quad k\in\mathbb{C}^-\cap\{|k| =\epsilon\}
   \end{cases} 
\end{equation}
and
\begin{equation}\label{w-}
w^+(x,t,k)=\begin{cases}
       \begin{pmatrix}
       0&0\\r(k)\eul^{2\ii kp(x,t,k)}&0
   \end{pmatrix},\quad k\in\mathbb{R}\cap\{|k| >\epsilon\},\\
   \begin{pmatrix}
       0&-\tilde r^*(k)\eul^{2\ii kp(x,t,k)}\\0&0
   \end{pmatrix},\quad k\in\mathbb{R}\cap\{|k| <\epsilon\},\\
   \begin{pmatrix}
0&0\\\frac{\kappa_2^0\eul^{2\ii kp(x,t,k)}}{a(k)\tilde a(k)}&0
   \end{pmatrix},\quad k\in\mathbb{C}^+\cap\{|k| =\epsilon\},\\
   0,\quad k\in\mathbb{C}^-\cap\{|k| =\epsilon\},   \end{cases} 
\end{equation}
and since $w^\pm$ are nilpotent, we can apply Lemma \ref{Lem 5.3}, which together with Zhou's vanishing lemma ensures the solvability of the
RH problem \textbf{RH$^{(xt)}$}. In particular,
\begin{equation}\label{mu_sol}
 \mu^{(xt)}(x,t,k)=I+(\mathds{1}-C_w)^{-1}(x,t)C_w(x,t)I
\end{equation}
and the solution $M^{(xt)}(x,t,k)$ of the Riemann--Hilbert problem \textbf{RH$^{(xt)}$} can be written in terms of $\mu^{(xt)}(x,t,k)$ as 
\begin{equation}\label{M_via_mu}
    {M^{(xt)}}(x,t,k)=I+\frac{1}{2\pi\ii}\int_\Sigma \frac{\mu^{(xt)}(x,t,z)(w^+ + w^-)(x,t,z)}{z-k}dz.
\end{equation}

Notice that
\begin{equation}\label{C_w}
   C_w(x,t)= C_{w(x,t)}
\end{equation}
and $C_w$ is linear in $w$.

To study the differentiability with respect to $x$ and $t$, it is convenient to
modify the RH problem so that its jump matrix approaches the identity
sufficiently fast as $k\to\infty$, at least as $O\left(\frac{1}{k^2}\right)$. The jump matrix of the original problem decays only as $r(k)=O\left(\frac{1}{k}\right)$, and is of order $O(k)$ as $k\to 0$. To improve this behavior, we introduce two functions (c.f. \cite{L12})
\[
h(k):=\frac{r_1}{4\ii k},\quad \tilde h(k)=\ii\tilde r_1 k
\]
where $r_1$ and $\tilde r_1$ are determined by the expansions $r(k)=\frac{r_1}{4\ii k}+O\left(\frac{1}{k^2}\right)$ and $\tilde r(k)=\ii \tilde r_1 k+O(k^2)$. Notice that
\begin{equation}\label{prop_h_1}
 \overline{h(-\bar k)}=h(k),\quad h(k)\in C(\{|k|=\epsilon\}), \quad h(k)=O\left(\frac{1}{k}\right),\quad k\to\infty.   
\end{equation}
\begin{equation}\label{prop_tilh_1}
 \overline{\tilde h(-\bar k)}=\tilde h(k),\quad \tilde h(k)\in C(\{|k|=\epsilon\}), \quad \tilde h(k)=O\left(k\right),\quad k\to 0.   
\end{equation}

Moreover, by the definition of $h(k)$, we have
\begin{equation}\label{prop_h_2}
 h(k)-r(k)=O\left(\frac{1}{k^2}\right), \quad k\to \infty;\quad  \tilde h(k)-\tilde r(k)=O\left(k^2\right), \quad k\to 0.   
\end{equation}

We then introduce a matrix-valued function $\hat M(x,t,k)$ related to
$M(x,t,k)$ by
\begin{equation}\label{hat_M}
   \hat M(x,t,k)=\begin{cases}
        M(x,t,k)\eul^{-\ii k p(x,t,k)\sigma_3}\begin{pmatrix}
            1&0\\
        -h(k)&1
   \end{pmatrix}\eul^{\ii k p(x,t,k)\sigma_3},\quad \{|k|>\epsilon\}\cap\mathbb{C}_+,\\
         M(x,t,k)\eul^{-\ii k p(x,t,k)\sigma_3}\begin{pmatrix}
            1&h^*(k)\\
        0&1
\end{pmatrix}\eul^{\ii k p(x,t,k)\sigma_3},\quad \{|k|>\epsilon\}\cap\mathbb{C}_-,\\
             M(x,t,k)\eul^{-\ii k p(x,t,k)\sigma_3}\begin{pmatrix}
            1&0\\
        -\tilde h(k)&1
   \end{pmatrix}\eul^{\ii k p(x,t,k)\sigma_3},\quad \{|k|<\epsilon\}\cap\mathbb{C}_+,\\
         M(x,t,k)\eul^{-\ii k p(x,t,k)\sigma_3}\begin{pmatrix}
            1&\tilde h^*(k)\\
        0&1
\end{pmatrix}\eul^{\ii k p(x,t,k)\sigma_3},\quad \{|k|<\epsilon\}\cap\mathbb{C}_-
    \end{cases}  
    \end{equation}
and conclude that it satisfies the following RH problem:
\begin{enumerate}

    \item Jump condition across $\mathbb{R}\cup\{|k|=\epsilon\}$
    \begin{subequations}
        \label{jump_M_(xt)_hat}
        \begin{equation}
          \hat M_-^{(xt)}(x,t,k)=\hat M_+^{(xt)}(x,t,k)\hat J^{(xt)}(x,t,k),\quad k\in \mathbb{R}\cup\{|k|=\epsilon\} 
        \end{equation}
        where
         \begin{equation} \label{jump_M_(xt)_matr_hat}
        \hat  J^{(xt)}(x,t,k)=\eul^{-\ii kp(x,t,k)\sigma_3} \hat J^{(xt)}_0(k)\eul^{\ii kp(x,t,k)\sigma_3}
        \end{equation} 
        with $p(x,t,k)=x-\frac{t}{4 k^2}$ and
\begin{equation}\label{jump_M_(xt)_matr_0_hat}
\hat J^{(xt)}_0(k)=\begin{cases}
      \begin{pmatrix}
       1&0\\h(k)-r(k)&1
   \end{pmatrix} \begin{pmatrix}
       1&h^*(k)-r^*(k)\\0&1
   \end{pmatrix},\quad k\in\mathbb{R}\cap\{|k| >\epsilon\},\\
   \begin{pmatrix}
       1&\tilde r^*(k)-\tilde h^*(k)\\0&1
\end{pmatrix}\begin{pmatrix}
       1&0\\\tilde r(k)-\tilde h(k)&1
   \end{pmatrix},\quad k\in\mathbb{R}\cap\{|k| <\epsilon\},\\
   \begin{pmatrix}
       1&0\\
       h(k)-\tilde h(k)-\frac{\kappa_2^0}{a(k)\tilde a(k)}&1
   \end{pmatrix},\quad k\in\mathbb{C}^+\cap\{|k| =\epsilon\},\\
   \begin{pmatrix}
       1&h^*(k)-\tilde h^*(k)-\frac{\kappa_2^0}{a^*(k)\tilde a^*(k)}\\
       0&1
\end{pmatrix},\quad k\in\mathbb{C}^-\cap\{|k| =\epsilon\}
   \end{cases} 
\end{equation}
and $r(k)=\frac{b^*(k)}{a(k)}$, $\tilde r(k)=\frac{\tilde b^*(k)}{\tilde a(k)}$.
    \end{subequations}

\item Behavior at $\infty$:
\begin{equation}\label{inf_hatM_(xt)_hat}
      \hat M^{(xt)}(x, t,k)=I+O\left(\frac{1}{ k}\right),\quad k\to\infty.
\end{equation}
\end{enumerate}
All the considerations above apply to this RH problem as well. In particular,

\begin{equation}\label{mu_sol_hat}
 \hat \mu^{(xt)}(x,t,k)=I+(\mathds{1}-C_{\hat w})^{-1}(x,t)C_{\hat w}(x,t)I
\end{equation}
and
\begin{equation}\label{hat_M_via_mu}
    {\hat M^{(xt)}}(x,t,k)=I+\frac{1}{2\pi\ii}\int_\Sigma \frac{\hat\mu^{(xt)}(x,t,z)(\hat w^+ + \hat w^-)(x,t,z)}{z-k}dz.
\end{equation}
Here $\hat w_\pm$ are defined as in \eqref{w-} and \ref{w+}, with $r(k)$, $\tilde r(k)$, and $\frac{\kappa_2^0}{a(k)\tilde a(k)}$ replaced by $r(k)-h(k)$, $\tilde r(k)-\tilde h(k)$, and $\frac{\kappa_2^0}{a(k)\tilde a(k)}-h(k)+\tilde h(k)$, respectively.

The differentiability of $M^{(xt)}(x,t,k)$ then follows from the
differentiability of $\hat M^{(xt)}(x,t,k)$ established in

\begin{proposition}\label{Prop 5.6}
 The function $ \mu^{(xt)}(x,t,k)$ is $C^1$ with respect to $x$ and $t$. 
\end{proposition}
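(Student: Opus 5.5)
Differentiability of $\mu^{(xt)}$ in $(x,t)$ will be read off the representation \eqref{mu_sol_hat} for the modified problem \eqref{hat_M}, i.e.
\[
\hat\mu-I=(\mathds{1}-C_{\hat w})^{-1}C_{\hat w}I ,
\]
regarded as a map $(x,t)\mapsto \hat\mu(x,t,\cdot)-I\in L^2(\Sigma)$. Since $C_{\hat w}$ depends linearly on $\hat w$ (see \eqref{C_w}), by \eqref{C_w_ess_1}--\eqref{C_w_ess_2} it suffices to show that $(x,t)\mapsto\hat w^\pm(x,t,\cdot)$ is $C^1$ as a map into $L^2(\Sigma)\cap L^\infty(\Sigma)$. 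Then $(x,t)\mapsto C_{\hat w(x,t)}$ is $C^1$ in $\mathcal{B}(L^2(\Sigma))$, and $C_{\hat w}I$ is $C^1$ in $L^2(\Sigma)$. Inversion is smooth on the open set of invertible operators. Invertibility of $\mathds{1}-C_{\hat w}$ for every $(x,t)$ follows from Lemma \ref{Lem 5.3} together with Zhou's vanishing lemma, as already used for Proposition \ref{prop:5.2}. Hence $\hat\mu$ is $C^1$, with
\[
\partial\hat\mu=(\mathds{1}-C_{\hat w})^{-1}\bigl(C_{\partial\hat w}\hat\mu\bigr), \qquad \partial\in\{\partial_x,\partial_t\}.
\]
Finally, $\mu^{(xt)}$ is recovered from $\hat\mu$ by multiplying by the explicit factors in \eqref{hat_M}, which are entire in $x,t$ and smooth on $\Sigma$. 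This transfers $C^1$ regularity back to $\mu^{(xt)}$ and $M^{(xt)}$.

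The main step is the differentiation of $\hat w^\pm$. Their entries have the form $f(k)\eul^{\pm2\ii kp(x,t,k)}$ with $f\in\{r-h,\ \tilde r-\tilde h,\ \kappa_2^0/(a\tilde a)-h+\tilde h\}$ and their conjugates. Differentiating produces the factors
\[
\partial_x\eul^{2\ii kp}=2\ii k\,\eul^{2\ii kp},\qquad \partial_t\eul^{2\ii kp}=-\tfrac{\ii}{2k}\,\eul^{2\ii kp}.
\]
For real $k$ the exponential has modulus one. So I need $k(r-h)\in L^2\cap L^\infty$ for $|k|>\epsilon$, and $k^{-1}(\tilde r-\tilde h)\in L^2\cap L^\infty$ for $|k|<\epsilon$. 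The first holds because $r-h=O(k^{-2})$ by \eqref{prop_h_2}; this is exactly why $h$ was introduced. The second holds because $\tilde r-\tilde h=O(k^2)$. The remaining cross terms are harmless: $t/k$ stays bounded for $|k|>\epsilon$, and $xk$ stays bounded for $|k|<\epsilon$. On the circle $|k|=\epsilon$ everything is continuous and the exponentials are bounded for $(x,t)$ in compact sets.

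To obtain continuity of the derivatives, not merely their existence, I use difference quotients. The mean value theorem gives
\[
\Bigl|\frac{\eul^{2\ii kp(x+\delta)}-\eul^{2\ii kp(x)}}{\delta}-2\ii k\,\eul^{2\ii kp(x)}\Bigr|\le 2|k|\cdot\sup_{|s|\le|\delta|}\bigl|\eul^{2\ii k s}-1\bigr| ,
\]
which is bounded by $4|k|$ and tends to $0$ pointwise. Multiplying by the $L^2$ weight $|k(r-h)|$ and applying dominated convergence yields convergence in $L^2$; the same argument works in $t$.

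I expect the obstacle to be the $L^\infty$ part on $\mathbb R$. The $L^\infty$-convergence of the difference quotients is not uniform near $k\to\infty$ (respectively $k\to0$). I would handle this by splitting $\Sigma$ into a compact part, where convergence is uniform, and the tails, where $|k(r-h)|$ (respectively $|k^{-1}(\tilde r-\tilde h)|$) is small and bounds the quotient error. This uses only the decay rates in \eqref{prop_h_2}; these rates come from the $H^{2,2}$ assumption on $u_0$ through the properties of $a,b,\tilde a,\tilde b$ established in Section~\ref{sec:2}.
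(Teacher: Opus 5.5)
Your proposal is correct and follows essentially the paper's route. Both arguments write $\hat\mu$ as a composition of three pieces: the maps $(x,t)\mapsto\hat w^{\pm}\in L^2(\Sigma)\cap L^\infty(\Sigma)$, the bounded linear dependence $w\mapsto C_w$, and operator inversion, and then conclude by the chain rule. The first map is differentiable because $r-h=O(k^{-2})$ absorbs the factor $2\ii k$ at $k=\infty$ and $\tilde r-\tilde h=O(k^2)$ absorbs the factor $\frac{1}{2\ii k}$ at $k=0$. Your tail-splitting for the $L^\infty$ difference quotients is more explicit than the paper's argument. Your treatment of inversion as smooth on the open set of invertible operators is also more accurate, since the paper loosely calls this map linear.

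One caution about your last step. Pointwise smoothness of the factors in \eqref{hat_M} would not by itself justify the transfer back to $\mu^{(xt)}$ on the unbounded part of $\Sigma$. Multiplying by a term like $h(k)\eul^{2\ii kp}$ reintroduces exactly the non-decaying derivative $\tfrac{r_1}{2}\eul^{2\ii kp}$ that $h$ was introduced to remove. What actually happens is that on $\mathbb{R}$ the boundary value of that factor is absorbed exactly into $\hat v^{\pm}$. Hence $\hat\mu=\hat M_+(\hat v^+)^{-1}=M_+(v^+)^{-1}=\mu$ on $\mathbb{R}$. Only on the compact circle $|k|=\epsilon$ do $\mu$ and $\hat\mu$ differ, and there the connecting factor is genuinely smooth and bounded.
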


\begin{proof} Notice that  $\hat \mu^{(xt)}(x,t,k)$ is a composition of the following maps:
\begin{align*}
&(x,t)\mapsto C_{\hat w(x,t)}I: (\mathbb{R}^+,[0,T))\to L^2(\Sigma);\\
&(x,t)\mapsto (\mathds{1}-C_{\hat w(x,t)})^{-1}:(\mathbb{R}^+,[0,T))\to \mathcal{B}(L^2(\Sigma)).
\end{align*}
In turn, $(x,t)\mapsto C_{\hat w(x,t)}I: (\mathbb{R},[0,\infty)\to L^2(\Sigma)$ is a composition of 
\begin{align*}
    &(x,t)\mapsto \hat w^\pm(x,t):(\mathbb{R}^+,[0,T))\to L^2(\Sigma);\\
    &Q:(\hat w^+,\hat w^-)\mapsto C_{\hat w }I:L^2(\Sigma)\times L^2(\Sigma) \to L^2(\Sigma)
\end{align*}
and $(x,t)\mapsto (\mathds{1}-C_{\hat w(x,t)})^{-1}:(\mathbb{R}^+,[0,T))\to \mathcal{B}(L^2(\Sigma))$ is a composition of
\begin{align*}
    &(x,t)\mapsto \hat w^\pm(x,t):(\mathbb{R}^+,[0,T))\to L^\infty(\Sigma);\\
    &F:(\hat w^+,\hat w^-)\mapsto  (\mathds{1}-C_{\hat w})^{-1}:L^\infty(\Sigma)\times L^\infty(\Sigma)\to \mathcal{B}(L^2(\Sigma)).
\end{align*}

Differentiating $\eul^{\ii kp(x,t,k)}$ we get
\[
\frac{d}{dx} (\eul^{2 \ii kp(x,t,k)})=2\ii k\eul^{2 \ii kp(x,t,k)}, \quad \frac{d}{dt} (\eul^{2 \ii kp(x,t,k)})=\frac{1}{2\ii k}\eul^{2 \ii kp(x,t,k)}. 
\]
Combining \eqref{prop_r_1}--\eqref{prop_r_3} with \eqref{prop_h_1}--\eqref{prop_h_2}, we conclude that map $(x,t)\mapsto \hat w^\pm(x,t):(\mathbb{R}^+,[0,T))\to L^\infty(\Sigma)$ is differentiable w.r.t. both $x$ and $t$. On the other hand, these properties together with the dominated convergence theorem yield the differentiability of $(x,t)\mapsto \hat w^\pm(x,t):(\mathbb{R}^+,[0,T))\to L^2(\Sigma)$.

The maps $F$ and $Q$ are linear, and estimates \eqref{C_w_ess_1} and \eqref{C_w_ess_2} imply their differentiability. Moreover, $dF:(u^+,u^-)\mapsto -C_u:L^\infty(\Sigma)\times L^\infty(\Sigma)\to \mathcal{B}(L^2(\Sigma))$ and $dQ:(u^+,u^-)\mapsto C_u I:L^2(\Sigma)\times L^2(\Sigma)\to L^2(\Sigma)$. Hence, by the chain rule (see \cites{TFA}), the map $(x,t)\mapsto (\mathds{1}-C_{\hat w(x,t)})^{-1}:(\mathbb{R}^+,[0,T))\to \mathcal{B}(L^2(\Sigma))$ is differentiable together with its inverse. The chain rule again implies differentiability of the map $(x,t)\mapsto C_{\hat w(x,t)}I: (\mathbb{R}^+,[0,T))\to L^2(\Sigma)$. Consequently, $\hat \mu^{(xt)}(x,t,k)$ is differentiable as the composition of differentiable maps.
\end{proof}

Notice that properties \eqref{prop_h_1} and \eqref{prop_tilh_1} imply that the first term in the expansion of $\hat M(x,t,k)$ as $k \to 0$, as well as the second term in the expansion as $k \to \infty$, coincide with those of $M(x,t,k)$. 
Thus, Proposition~\ref{Prop 5.6} together with the dominated convergence theorem implies the following corollary:

\begin{corollary}
    The functions $\alpha(x,t)$, $\beta(x,t)$ and $\xi(x,t)$ are $C^1$ with respect to $x$ and $t$.
\end{corollary}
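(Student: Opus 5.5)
The corollary follows from Proposition~\ref{Prop 5.6}, applied to $\hat\mu^{(xt)}$ (which is what its proof actually treats), together with the Cauchy-integral representation \eqref{hat_M_via_mu} of $\hat M^{(xt)}$. The plan is to write $\xi$, $\alpha$, $\beta$ as bounded (bi)linear functionals of the pair $(\hat\mu^{(xt)},\hat w^\pm)$. These functionals are continuous on $L^2(\Sigma)$. The $C^1$ dependence then transfers from the maps $(x,t)\mapsto\hat\mu^{(xt)}-I\in L^2(\Sigma)$ and $(x,t)\mapsto\hat w^\pm(x,t)$, the latter in a suitably weighted $L^2$ space.

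The first step treats $\xi$. Expanding $1/(z-k)=-\frac1k-\frac{z}{k^2}-\dots$ in \eqref{hat_M_via_mu} gives
\[
\hat M^{(xt)}(x,t,k)=I-\frac{1}{2\pi\ii k}\int_\Sigma \hat\mu^{(xt)}(x,t,z)(\hat w^++\hat w^-)(x,t,z)\,dz+o\left(\frac1k\right).
\]
This is legitimate because $\hat w^\pm$ decay like $O(k^{-2})$ at infinity, by \eqref{prop_h_2}, so $z\hat w^\pm\in L^2(\Sigma)$. Since the $1/k$ coefficients of $\hat M$ and $M$ coincide (remark after Proposition~\ref{Prop 5.6}), $\xi(x,t)$ is, up to the factor $4\ii$, the $(12)$ entry of $-\frac{1}{2\pi\ii}\int_\Sigma \hat\mu(\hat w^++\hat w^-)\,dz$. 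Write $\hat\mu=I+(\hat\mu-I)$. The term $\int(\hat w^++\hat w^-)\,dz$ is $C^1$ in $(x,t)$: its derivatives bring down factors $2\ii k$ or $\frac{1}{2\ii k}$ times the exponentials, and the integrands stay integrable thanks to $O(k^{-2})$ decay at infinity and $O(k^2)$ vanishing at $0$. Dominated convergence then applies, using the bounds on $r-h$ and $\tilde r-\tilde h$. The term $\int(\hat\mu-I)(\hat w^++\hat w^-)\,dz$ is an $L^2$ pairing. Both factors are $C^1$ with values in $L^2(\Sigma)$ (the second also after multiplication by the derivative factors), so the product rule for the continuous bilinear pairing gives $C^1$.

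The second step treats $\alpha$ and $\beta$. These come from $\hat M(x,t,0)$, since the leading term at $k=0$ is unchanged by the $\tilde h$-conjugation, as $\tilde h(0)=0$. Evaluating \eqref{hat_M_via_mu} at $k=0$, which lies inside the disk and away from $\Sigma$ except through the real segment, gives
\[
\hat M(x,t,0)=I+\frac{1}{2\pi\ii}\int_\Sigma \frac{\hat\mu(\hat w^++\hat w^-)}{z}\,dz.
\]
Near $0$ on $\mathbb{R}$ we have $\hat w^\pm=O(z^2)e^{\pm2\ii zp}$, and the exponential is bounded there since $|e^{\pm2\ii zp}|=1$ on $\mathbb{R}$. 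So $\hat w^\pm/z\in L^2$, and the same pairing argument as in the first step gives $C^1$ regularity of $\alpha$ and $\beta$.

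The main obstacle is the $t$-derivative near $k=0$. Differentiating $e^{2\ii kp}$ in $t$ produces a factor $\frac{1}{2\ii k}$, and after division by $z$ in the formula for $\hat M(x,t,0)$ we need integrability of $\tilde w/z^2$. I would first check that the $O(k^2)$ vanishing of $\tilde r-\tilde h$ from \eqref{prop_h_2} exactly compensates this factor. If it does not quite suffice in $L^2$, the fallback is to subtract one more Taylor term of $\tilde r$ at $0$, improving $\tilde h$ to $O(k^3)$ accuracy. The symmetric issue at $\infty$ for the $x$-derivative, where the factor $2\ii k$ appears, would be handled the same way with a higher-order $h$. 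The regularity assumption $u_0\in H^{2,2}$ should give enough expansion terms of $r$ and $\tilde r$ for this.
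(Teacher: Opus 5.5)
Your route is the paper's own: Proposition~\ref{Prop 5.6} for $\hat\mu^{(xt)}$, the representation \eqref{hat_M_via_mu}, coincidence of the relevant coefficients of $\hat M$ and $M$, and dominated convergence. However, the step that is supposed to give $\partial_x\xi$ fails. You claim that after differentiating $\int_\Sigma(\hat w^++\hat w^-)\,dz$ the integrands stay integrable thanks to the $O(k^{-2})$ decay. For the $x$-derivative this is false. Differentiation in $x$ brings down $\pm2\ii k$, so on $\mathbb{R}\setminus(-\epsilon,\epsilon)$ the relevant entry becomes $-2\ii k\,(h^*-r^*)\eul^{-2\ii kp}=O(k^{-1})$ by \eqref{prop_h_2}, which is not in $L^1$ near $\infty$. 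The pairing of $\partial_x\hat w^\pm$ with $\hat\mu-I$ is harmless, since $k\hat w^\pm\in L^2$. It is the contribution of the identity part of $\hat\mu$ that breaks. That contribution is at best the Fourier transform of an $L^2$ function of $k$, i.e.\ an $L^2$ but not necessarily continuous function of $x$.

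Your fallback, a higher-order $h$ supplied by $u_0\in H^{2,2}$, cannot repair this. That class only gives $u_{0xx}\in L^2$. In fact $\xi(\cdot,0)=u_{0x}$ (Section~\ref{sec:8.4}) lies in $H^1$ but is in general not $C^1$. The defect persists for $t>0$, since $u_x(x,t)=u_{0x}(x)+\int_0^t\sin u(x,\tau)\,d\tau$ and the integral term is $C^1$ in $x$. So $x$-differentiability of $\xi$ is not merely unproved but false in general under the paper's hypotheses. The paper's one-line appeal to dominated convergence passes over exactly this point.

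What your argument does establish is that $\xi$ is $C^1$ in $t$ and $\alpha,\beta$ are $C^1$ in $(x,t)$. This is also what Section~\ref{sec:6} actually uses, since only $\xi_t$, $\alpha_x$, $\beta_x$ enter \eqref{rel}.
\begin{itemize}
\item For $\xi_t$, the factor $\frac{1}{2\ii k}$ is harmless at $\infty$ and is compensated near $0$ by $\tilde r-\tilde h=O(k^2)$.
\item For $\alpha,\beta$, use $\hat M(x,t,0)=I+\frac{1}{2\pi\ii}\int_\Sigma z^{-1}\hat\mu(\hat w^++\hat w^-)\,dz$, understood as a non-tangential limit since $0\in\Sigma$. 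The extra $z^{-1}$ turns the $x$-derivative into an $O(z^{-2})$ integrand at $\infty$.
\item The $t$-derivative of that same integrand produces, up to a constant, $z^{-2}\hat w^\pm=O(1)$ on $(-\epsilon,\epsilon)$. So the obstacle you singled out at $k=0$ is not one.
\item On $|k|=\epsilon$ all factors are bounded, because for $x,t\ge0$ the exponential $\eul^{2\ii kp}$ is bounded on the upper arc and $\eul^{-2\ii kp}$ on the lower arc.
\end{itemize}
The corollary should be stated in this restricted form. Continuity of $\partial_x\xi$ would need an extra hypothesis making $k(r-h)$ integrable at $\infty$. That requires more regularity than $H^{2,2}$, e.g.\ one more derivative of $u_0$, together with a second-order $h$.
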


\subsection{Problem II}\label{sec:7.3}
The idea of the proof that $\check M^{(xt)}(x,t,k)$ is differentiable with respect to $x$ and $t$ is essentially the same as in the case of Problem~I. Therefore, we only outline the necessary modifications.

In the case of Problem II, we assume that $d^*(k)$ has no zeros in $\overline{\mathbb{C}^+}$:  zeros in the open upper half-plane can be eliminated by introducing small circular contours and converting the residue conditions into jump conditions, zeros on $\mathbb{R}$ can be eliminated by introducing small half-circular contours.

We then introduce the matrices $v_\pm$ and rewrite the jump matrix in a form suitable for Lemmas~\ref{Lem 5.2} and~\ref{Lem 5.3}. Namely,  $[\check J^{(xt)}]^{(-1)}(x,t,k)=(v^-)^{-1}v^+$ with

\begin{equation}\label{v+_check}
v^-(x,t,k)=\begin{cases}
       \begin{pmatrix}
       1&0\\-\rho^*(k)\eul^{2\ii kp(x,t,k)}&1
   \end{pmatrix},\quad k\in\mathbb{R}\cap\{|k| >\epsilon\},\\
   \begin{pmatrix}
       1&\tilde\rho(k)\eul^{-2\ii kp(x,t,k)}\\0&1
   \end{pmatrix},\quad k\in\mathbb{R}\cap\{|k| <\epsilon\},\\
   I,\quad k\in\mathbb{C}^+\cap\{|k| =\epsilon\},\\
  \begin{pmatrix}
          {1}&0\\
\Xi^*(k)\eul^{2\ii kp(x,t,k)}&{1}
       \end{pmatrix},\quad k\in\mathbb{C}^-\cap\{|k| =\epsilon\}
   \end{cases} 
\end{equation}
and
\begin{equation}\label{v-_check}
v^+(x,t,k)=\begin{cases}
       \begin{pmatrix}
       1&\rho(k)\eul^{-2\ii kp(x,t,k)}\\0&1
   \end{pmatrix},\quad k\in\mathbb{R}\cap\{|k| >\epsilon\},\\
   \begin{pmatrix}
       1&0\\-\rho^*(k)\eul^{2\ii kp(x,t,k)}&1
   \end{pmatrix},\quad k\in\mathbb{R}\cap\{|k| <\epsilon\},\\
  \begin{pmatrix}
{1}&\Xi(k)\eul^{-2\ii kp(x,t,k)}\\
          0&{1}
       \end{pmatrix},\quad k\in\mathbb{C}^+\cap\{|k| =\epsilon\},\\
   I,\quad k\in\mathbb{C}^-\cap\{|k| =\epsilon\}.
   \end{cases} 
\end{equation}
Moreover, from the considerations in Section \ref{sec:2.2}, it follows that
\begin{align}\label{prop_r_1_check}
  &\rho(k)=O\left(\frac{1}{k}\right),\quad k\in\mathbb{R},~   k\to\infty;\quad  \rho(k)\in C(\mathbb{R}),\\
  \label{prop_r_2_check}
  &\tilde \rho(k)=O(k),\quad k\in\mathbb{R},~   k\to 0;\quad  \tilde \rho(k)\in C(\mathbb{R}),\\
\label{prop_r_3_check}
  &\Xi(k)\in C(\{|k|=\epsilon\}), \quad  k\in\mathbb{C}^+.
\end{align}
Further, we proceed in the same way as in the case of Problem I.

\section{The uniqueness and existence results for Problem I}\label{sec:8}

\subsection{Uniqueness result for Problem I}\label{sec:8.1}

In this section, assuming the existence of a solution to Problem~I (i.e., with an appropriate behavior as $x \to +\infty$), we address its uniqueness.

\begin{proposition}\label{uniq_I}
Let $ u(x,t)$ be a solution of the sG equation in the domain $x>0$, $0<t<T$
such that $u(\cdot,t)\in C^1([0,T],\tilde H^{1,2}((0,\infty)))$. 
Then $u(x,t)$ is uniquely determined by $ u(x,0)$, $x\in[0,\infty)$ through a unique solution of the Riemann--Hilbert problem, for which the data (jump matrix and residue condition) are given in terms of the spectral functions associated with $ u(x,0)$. 
    
\end{proposition}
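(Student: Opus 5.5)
The plan is to run the direct spectral analysis of Section~\ref{sec:2} on the given solution $u$, and then use uniqueness for \textbf{RH$^{(xt)}$}. Since $u\in C^1([0,T],\tilde H^{1,2}((0,\infty)))$, the Neumann series for \eqref{inteq_inf1}--\eqref{inteq_inf3} and \eqref{inteq_01}--\eqref{inteq_03} converge. This yields the eigenfunctions $\Phi_j$, $\Phi_{0j}$ and the spectral functions $a,b,\tilde a,\tilde b$ (and $A,B,\tilde A,\tilde B$). From these we build $M^{(xt)}(x,t,k)$ by \eqref{M_(xt)}.

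The key observation concerns the data of the resulting RH problem. The jump matrix \eqref{jump_M_(xt)_matr}--\eqref{jump_M_(xt)_matr_0} and the residue conditions \eqref{res-M_(xt)} involve only $r=b^*/a$, $\tilde r=\tilde b^*/\tilde a$, $\kappa_2^0/(a\tilde a)$, the zeros $k_j$ of $a$, and $\dot a(k_j)$, $b(k_j)$. All of these are determined by $s(k)=\Phi_3(0,0,k)$ and $\tilde s(k)=\Phi_{03}(0,0,k)$. These, in turn, are computed from $u(x,0)=u_0(x)$ alone through \eqref{inteq_inf3_t_0} and \eqref{inteq_03_t_0}. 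Note also that $\kappa^0_{1,2}$ depend only on $u_0(0)$, and $\tilde s = P(0,0)s$ by \eqref{s_via_til_s}. The boundary functions $A,B$ never enter: the combination $\Phi_2/a$, $\Phi_3$ uses $\Phi_2$ only as an eigenfunction of the Lax pair, and its normalization does not appear in the data.

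Next I would verify that $M^{(xt)}$ satisfies the RH problem \textbf{RH$^{(xt)}$} for every $(x,t)$. The jump follows from \eqref{rel_inf_1}, \eqref{rel_0_1}, \eqref{Phi_0_inf_rel} and \eqref{psi_0}. The behaviour $I+O(1/k)$ at infinity follows from the listed asymptotics of the columns. The residues follow from $\det\Phi_j=1$, which gives the relation $\Phi_2^{(1)}(k_j)=\frac{e^{2\ii k_j p}}{b(k_j)}\Phi_3^{(2)}(k_j)$ at zeros of $a$. One subtle point is the region $|k|<\epsilon$: there $M^{(xt)}$ uses $\Psi_{0j}=P^{-1}\Phi_{0j}$. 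I must check that $\Psi_{03}=\Phi_3$ by \eqref{Phi_0_inf_reduced_2}, and that $\Psi_{02}$ differs from $\Phi_2$ only by the constant $P(0,0)$. This produces exactly the jump on $|k|=\epsilon$ with entry $\kappa_2^0/(a\tilde a)$. This compatibility is the step I expect to require the most care, including the genericity assumption that zeros of $a,\tilde a$ are simple and avoid $|k|\le\epsilon$.

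Then Proposition~\ref{prop:5.2} (equivalently, the Liouville argument of Proposition~\ref{prop:det}) gives uniqueness of the solution of \textbf{RH$^{(xt)}$}. Hence $M^{(xt)}(x,t,k)$ coincides with the unique solution built from the spectral data of $u_0$. Finally, \eqref{inf_M_(xt)} recovers $u_x(x,t)$ as the $(12)$ coefficient $\xi(x,t)$ of the $1/(4\ii k)$ term. Alternatively, \eqref{i_beh-M_(xt)} recovers $\cos\frac u2,\sin\frac u2$ from $M^{(xt)}(x,t,0)=P^{-1}(x,t)$, which fixes $u$ modulo $4\pi$. Continuity in $x$ and the decay condition $\sin\frac{u}{2}\to0$, $u\in 2\pi k+\tilde H^{1,2}$ as $x\to\infty$ then pin down $u(x,t)=-\int_x^\infty\xi(y,t)\,dy+2\pi k$. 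The integer is fixed by continuity in $t$ from $u_0$. Therefore two solutions with the same $u(x,0)$ yield the same RH data and hence coincide.
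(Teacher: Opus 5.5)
Your proposal is correct and is essentially the paper's argument. The jump and residue data of \textbf{RH$^{(xt)}$} are built only from $a,b$ (hence also $\kappa_{1,2}^0,\tilde a,\tilde b$) computed from $u(x,0)$, so uniqueness of its solution determines $M^{(xt)}$; from this, $u_x=\xi$ is read off at $k=\infty$, and $u$ is recovered by integrating from $+\infty$ with the integer fixed by $u(x,0)$. The only difference is that you explicitly verify what the paper takes from the construction in Section~\ref{sec:5}, namely the jumps on $\mathbb{R}$ and on $|k|=\epsilon$, the residue conditions, and the absence of $A,B$ from the data, whereas the paper goes straight to listing the reconstruction steps.
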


\begin{proof} The uniqueness of the solution to the RH problem implies, using \eqref{i_beh-M_(xt)}, the following procedure for representing
$u(x,t)$ in terms of $ M^{(xt)}(x,t,k)$:

  \begin{enumerate}[Step 1.]

  \item Given $u(x,0)$ construct $a(k)$, $b(k)$ (see Section \ref{subsec:2.1.4}).
  
    \item Having $a(k)$ and $b(k)$, compute $\kappa_1^0$ and $\kappa_2^0$, and determine $\tilde a(k)$ and $\tilde b(k)$ via  \eqref{s_via_til_s}. With these data, formulate the Riemann--Hilbert problem \textbf{RH$^{(xt)}$}, see Section \ref{subsec:5.1}.
    \item 
    Solve this RH problem for all $x\ge 0$ and $t\in[0,T]$
    and 
evaluate its  solution $ M^{(xt)}(x,t,k)$  at $k=\infty$:
\begin{equation}\label{hatM_xt__inf}
        M^{(xt)}(x,t,k)= I+\frac{1}{4\ii k}\begin{pmatrix}
           -\eta(x,t)&\xi(x,t)\\\xi(x,t)&\eta(x,t)
       \end{pmatrix}+O(\frac{1}{k^2}). 
    \end{equation}
    \item 
    Define $ u_x(x,t)$, $x\ge 0$ and $t\in[0,T]$, from this expansion as follows (cf. \eqref{inf_M_(xt)}):
    \begin{equation}\label{sol_via_RH_xt}
      u_x(x,t)=\xi(x,t).
    \end{equation}
 \item  Having $u_{x}(x,t)$, construct $u(x,t)$ by
\begin{equation}\label{Fund_u_xt}
   u(x,t) = -\int_x^{+\infty} u_{x}(y,t)\dd y+2\pi k,
\end{equation}
where $k$ is defined by the behavior of $u(x,0)$ as $x\to+\infty$.  
    
\end{enumerate}

\end{proof}

\subsection{Well-posedness of Problem I: mapping of RH problems}\label{sec:8.2}
The fact that the initial data on a half-line $x\in [l,+\infty)$ determine uniquely the solution of the sG equation in the corresponding
quarter-plane $x\ge l$, $t\ge 0$ suggests that the RH problems associated with two initial data, 
$u_1(x,0)$ for $x\in [l_1,+\infty)$ and $u_2(x,0)$ for 
$x\in [l_2,+\infty)$ with $l_2<l_1$ such that 
$u_2(x,0)=u_1(x,0)$ for all $x\in [l_1,+\infty)$,
have to be related for $x\in [l_1,+\infty)$, $t\ge 0$.

First, assume that $u_1(x,t)$ is a solution of the sG equation on $x\in[l_1,+\infty)$ and $u_2(x,t)$ is a solution of the sG equation on $x\in[l_2,+\infty)$ and establish a useful relation amongst the spectral functions of the problems, in the framework of the direct problems. In analogy with \eqref{p}, 
introduce 
\begin{equation}\label{p-j}
p_j(x,t,k):=x-l_j-\frac{t}{4 k^2},\qquad j=1,2
\end{equation}
and notice that
\begin{equation}\label{p2-p1}
p_2(x,t,k)-p_1(x,t,k) = 
l_2-l_1=:\mu>0.
\end{equation}
Introduce the Jost solutions $\Phi_{ 2}$, $\Phi_{ 3}$, $\hat \Phi_{ 2}$ and $\hat \Phi_{ 3}$
as follows: $\Phi_{ 3}$ and $\hat\Phi_{ 3}$ are introduced by \eqref{inteq_inf3}, where $U_\infty$ is given by \eqref{Lax-U_infty} with $u_x$ replaced by $u_{1x}$ and $u_{2x}$,respectively,
whereas 
let $\Phi_{ 2}$ and $\check \Phi_{2}$ be the analogues of 
$\Phi_{ 2}$ introduced by \eqref{inteq_inf2}, taking into account the left ends of the spatial intervals:
\begin{subequations}\label{inteq_inf2-4}
    \begin{align}
\Phi_{ 2}(x,t,k)=I & +
\int_{l_1}^{x}
	\eul^{-\ii k(x-y)\hat\sigma_3}( U_\infty\Phi_{2})(y,t,k) \dd y  \nonumber \\
&+\eul^{-\ii k(x-l_1)\hat\sigma_3}
\int_{0}^{t}
	\eul^{\frac{\tau-t}{4 \ii k}\hat\sigma_3}( V_\infty\Phi_{2})(l_1,\tau,k) \dd \tau,\\
\hat\Phi_{ 2}(x,t,k)=I & +
\int_{l_2}^{x}
	\eul^{-\ii k(x-y)\hat\sigma_3}( U_\infty\hat\Phi_{2})(y,t,k) \dd y  \nonumber \\
&+\eul^{-\ii k(x-l_2)\hat\sigma_3}
\int_{0}^{t}
	\eul^{\frac{\tau-t}{4 \ii k}\hat\sigma_3}( V_\infty\hat\Phi_{2})(l_2,\tau,k) \dd \tau,    
\end{align}
\end{subequations}
where $U_\infty$ and $V_\infty$ are given by \eqref{Lax-U_infty} and \eqref{Lax-V_infty} with $u$ replaced by $u_{1}$ and $u_{2}$, respectively.

Introduce the scattering matrices $s^{(j)}(k)$, $j=1,2$
(with the corresponding entries $a^{(j)}(k)$ and $b^{(j)}(k)$)
associated with the initial data 
$\{u_j(x,0), x\in [l_j,+\infty)\}$, which are determined by the scattering 
relations similar to \eqref{rel_inf_1}:
\begin{subequations}\label{rel_inf-234}
    \begin{align}\label{rel_inf_32}
        &\Phi_{3}(x,t,k)=\Phi_{2}(x,t,k)
        \eul^{-\ii k p_1(x,t,k)\sigma_3}s^{(1)}(k)\eul^{\ii k p_1(x,t,k)\sigma_3},\\\label{rel_inf_34}
        &\hat\Phi_{ 3}(x,t,k)=\hat\Phi_{ 2}(x,t,k)\eul^{-\ii k p_2(x,t,k)\sigma_3}s^{(2)}(k)\eul^{\ii k p_2(x,t,k)\sigma_3},
    \end{align}
\end{subequations}
where 
\begin{subequations}
            \label{s1}
            \begin{equation}
s^{(1)}(k)=\Phi_{ 3}(l_1,0,k)=:\begin{pmatrix}
        a^{(1)*}(k) & b^{(1)}(k)\\
        -b^{(1)*}( k) & a^{(1)}(k)
    \end{pmatrix}
    \end{equation}
        and 
    \begin{equation}
            \label{s2}
s^{(2)}(k)=\hat\Phi_{ 3}(l_2,0,k)=:\begin{pmatrix}
        a^{(2)*}(k) & b^{(2)}(k)\\
        -b^{(2)*}( k) & a^{(2)}(k)
    \end{pmatrix}.
    \end{equation}
    \end{subequations}
It follows that for $t=0$ and $x\geq l_1$, $\Phi_{ 2}$ and $\hat\Phi_{ 2}$ can be related 
(taking into account \eqref{p2-p1}) by
\begin{equation}\label{rel-42}
\hat\Phi_{ 2}(x,0,k) = \Phi_{2}(x,0,k)
\eul^{-\ii k p_1(x,0,k)\sigma_3}s^{(1)}(k)
\eul^{-\ii k \mu \sigma_3}[s^{(2)}]^{-1}(k)
\eul^{\ii k p_2(x,0,k)\sigma_3}.
\end{equation}

Now, we notice that by construction, the first column of 
$\hat\Phi_{\infty 2}(x,t,k)$ is analytic in $\mathbb {C}^+$ and, moreover, its (21) entry
is $O(1/k)$ as $k\to \infty$. On the other hand, the (21) entry of the
r.h.s. of \eqref{rel-42} evaluated at $x=l_1$ reads in terms 
$a^{(j)}$ and $b^{(j)}$ as 
$-b^{(1)*}(k)a^{(2)}(k) + b^{(2)*}(k)a^{(1)}(k)\eul^{2\ii k \mu }$. Consequently, we have 
\begin{equation}\label{gr-ini}
    -b^{(1)*}(k)a^{(2)}(k) + b^{(2)*}(k)a^{(1)}(k)\eul^{2\ii k\mu } = 
    O\left(\frac{1}{k}\right), \qquad k\to \infty, \quad 
    k\in {\mathbb C}^+.
\end{equation}
\begin{remark}\label{rem:gr-ini}
    In general, the spectral functions $b^{(j)*}(k)$ are determined  
    for $k\in {\mathbb C}^-$, but their combination as in the l.h.s.
    of \eqref{gr-ini} turns out to be analytically continued into 
    ${\mathbb C}^+$.
\end{remark}

\begin{proposition}
    Let $M^{(2)}(x,t,k)$, $x\in [l_2, +\infty)$, $t\ge 0$ satisfy the RH problem \textbf{RH$^{(xt)}$}
    with $p(x,t,k)$ replaced by $p_2(x,t,k)$ 
    and the spectral functions $a^{(2)}(k)$, $b^{(2)}(k)$
associated with the initial data $u(x,0)$, $x\in [l_2, +\infty)$. For $l_1>l_2$,
    define $\tilde M^{(1)}(x,t,k)$ for $x\in [l_1, +\infty)$, $t\ge 0$ as 
    follows:
    \begin{equation}
     \tilde M^{(1)}(x,t,k) := M^{(2)}(x,t,k) \begin{cases}
         \begin{pmatrix}
             1 &   0 \\
             X^{(1)}(k)\eul^{2\ii k  p_1(x,t,k)} - 
             X^{(2)}(k)\eul^{2\ii k p_2(x,t,k)} & 1
         \end{pmatrix}, & k\in {\mathbb C}^+, \\
         \begin{pmatrix}
             1 &  X^{(2)*}(k)\eul^{-2\ii k p_2(x,t,k)} - 
             X^{(1)*}(k)\eul^{-2\ii k  p_1(x,t,k)} \\
             0 & 1
         \end{pmatrix}, & k\in {\mathbb C}^-, \\
              \end{cases}
    \end{equation}
    where 
    \begin{equation}
       X^{(j)}(k) = \begin{cases}
        r^{(j)}(k), &   k\in {\mathbb C}^+\cap \{|k|>\epsilon\} \\
        r^{(j)}(k) - \frac{\kappa_2^{(0j)}}{a^{(j)}(k) \tilde a^{(j)}(k)}, & k\in {\mathbb C}^+\cap \{|k|<\epsilon\}
       \end{cases}
    \end{equation}
    with $r^{j}(k)=\frac{b^{(j)*}(k)}{a^{(j)}(k)}$ and 
    $\kappa_2^{(0j)}=b^{(j)}(0)$.
    
    Then for $x\in [l_1, +\infty)$ and $t\ge 0$,
    $\tilde M^{(1)}(x,t,k) = M^{(1)}(x,t,k)$, where $M^{(1)}(x,t,k)$  satisfies the RH problem \textbf{RH$^{(xt)}$} associated with the initial data $u(x,0)$ restricted to $x\in [l_1, +\infty)$. 
   
\end{proposition}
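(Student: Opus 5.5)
The plan is to check directly that $\tilde M^{(1)}$ satisfies every condition of \textbf{RH$^{(xt)}$} for the data $a^{(1)},b^{(1)}$ (with $p$ replaced by $p_1$), and then to invoke the uniqueness part of Proposition~\ref{prop:5.2}, in its $p_1$-shifted form. This is a ``dressing by a triangular factor'' argument. The right multiplier is lower (respectively upper) triangular with unit diagonal, so $\det\tilde M^{(1)}\equiv 1$ is automatic. The symmetries \eqref{sym-hatM_(xt)} are preserved because the multiplier in $\mathbb{C}^-$ is the Schwarz/$\sigma_2$-conjugate of the one in $\mathbb{C}^+$.

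The steps, in order, are as follows.

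\emph{Step 1 (analyticity of the multiplier).} Using \eqref{gr-ini} and Remark~\ref{rem:gr-ini}, note that
\[
X^{(1)}(k)\eul^{2\ii k p_1}-X^{(2)}(k)\eul^{2\ii k p_2}=\frac{\eul^{2\ii k p_1}}{a^{(1)}a^{(2)}}\bigl(b^{(1)*}a^{(2)}-b^{(2)*}a^{(1)}\eul^{2\ii k\mu}\bigr),
\]
using $p_2-p_1=\mu$. The bracket continues analytically into $\mathbb{C}^+$. Hence the entry is analytic in $\mathbb{C}^+\cap\{|k|>\epsilon\}$ away from zeros of $a^{(1)}a^{(2)}$. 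It also decays as $k\to\infty$, since the bracket is $O(1/k)$ and $\Im k\ge0$ with $x\ge l_1$ keeps $\eul^{2\ii k p_1}$ bounded near infinity. This gives $\tilde M^{(1)}=I+O(1/k)$.

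To see the continuation concretely, I would realize it through \eqref{rel-42} evaluated at $x=l_1$, $t=0$. That formula exhibits the (21) entry as a column of $\hat\Phi_2(l_1,0,k)$, which is analytic in $\mathbb{C}^+$. The $t$-dependence enters only through $\eul^{-\ii t/(2k)}$, which is bounded for large $|k|$.

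For the small-disc part I would use the $\kappa_2^{(0j)}$ correction in $X^{(j)}$, together with the relation $\tilde r=r-\kappa_2^0/(a\tilde a)$ from \eqref{jump_M_(xt)}. In that region the combination equals the analogous expression built from $\tilde a^{(j)},\tilde b^{(j)}$, whose analyticity near $0$ follows from the $\Phi_{0}$-version of \eqref{rel-42}.

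\emph{Step 2 (jumps).} On $\mathbb{R}\cap\{|k|>\epsilon\}$, factor the jump \eqref{jump_M_(xt)_matr_0} as $J=(v^-)^{-1}v^+$, with the triangular factors carrying $r\eul^{2\ii kp}$ and $r^*\eul^{-2\ii kp}$ as in \eqref{v+}. Right-multiplying $M^{(2)}$ by the dressing factors replaces the $r^{(2)}\eul^{2\ii kp_2}$ factors by $r^{(1)}\eul^{2\ii kp_1}$ factors. This is a direct $2\times2$ computation, so the new jump is exactly $J^{(1)}$.

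On $\{|k|<\epsilon\}\cap\mathbb{R}$ and on the circle $|k|=\epsilon$ the same computation applies. Here one uses that the jump across the circle, $-\kappa_2^0/(a\tilde a)$, is precisely the difference between $X$ inside and $X$ outside. The dressing therefore converts the circle jump of problem 2 into that of problem 1.

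\emph{Step 3 (residues).} At zeros of $a^{(2)}$, the pole of the dressing factor cancels the residue condition \eqref{res-M_(xt)} of $M^{(2)}$, so $\tilde M^{(1)}$ is regular there. At zeros of $a^{(1)}$, the term $X^{(1)}\eul^{2\ii kp_1}$ produces exactly the residue condition of problem 1, with constant $\eul^{2\ii k_jp_1}/(\dot a^{(1)}b^{(1)})$. This uses $b^{(1)*}(k_j)=1/b^{(1)}(k_j)$ from \eqref{detrel_ab}. Uniqueness then concludes the proof.

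\textbf{Main obstacle.} I expect the main difficulty to be Step 1, namely the genuine analytic continuation and boundedness of the off-diagonal dressing entry in $\mathbb{C}^+$, including near $k=0$ where $\eul^{-\ii t/(2k)}$ appears. I would handle this by writing the entry via Jost functions on $[l_2,l_1]$ at $t=0$ (a compactly supported potential, hence entire in $k$), rather than through the spectral functions directly.
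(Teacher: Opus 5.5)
Your proposal is correct and follows the same route as the paper: dress $M^{(2)}$ by the triangular factor and conclude by uniqueness, the key input being that $X^{(1)}\eul^{2\ii kp_1}-X^{(2)}\eul^{2\ii kp_2}$ continues analytically into $\mathbb{C}^+$ via \eqref{gr-ini} and \eqref{rel-42} (with its $\Phi_0$-analogue near $k=0$), is $O(1/k)$ at infinity and is $O(k)$ at $0$. The paper checks only these two asymptotic properties and treats the matching of jumps and residues as holding by construction, which you verify explicitly. One small repair in Step 3: \eqref{detrel_ab} holds only on $\mathbb{R}$, so replace ``$b^{(1)*}(k_j)=1/b^{(1)}(k_j)$'' by evaluating the entire transition matrix $\mathcal{T}(k):=\hat\Phi_2(l_1,0,k)$ through $(b^{(1)},a^{(1)})^{\top}=\mathcal{T}\,(b^{(2)}\eul^{-2\ii k\mu},a^{(2)})^{\top}$ and $\det\mathcal{T}=1$. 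This gives $\mathcal{T}_{21}(k_j)=-a^{(2)}(k_j)/b^{(1)}(k_j)$ at zeros of $a^{(1)}$ and $\mathcal{T}_{21}(k_j)=\eul^{2\ii k_j\mu}a^{(1)}(k_j)/b^{(2)}(k_j)$ at zeros of $a^{(2)}$, which are exactly the residue constants you need.
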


\begin{proof}
    In view of the construction of RH problems and the 
    way of extracting the solution of the sG equation from the 
    solution of the associated RH problem, it is sufficient to show that for $k\in {\mathbb C}^+$ and for all $x\ge l_1$, $t\ge 0$,
    \begin{enumerate}[(i)]
        \item 
    $X^{(1)}(k)\eul^{2\ii k  p_1(x,t,k)} - 
             X^{(2)}(k)\eul^{2\ii k p_2(x,t,k)} = O(1/k) $ as $k\to \infty$;
\item      $X^{(1)}(k)\eul^{2\ii k p_1(x,t,k)} - 
             X^{(2)}(k)\eul^{2\ii k p_2(x,t,k)} = O(k) $ as $k\to 0$.
\end{enumerate}

Regarding item (i), notice that for $k\in {\mathbb C}^+\cap \{|k|>\epsilon\}$,
\[
X^{(1)}(k)\eul^{2\ii k  p_1(x,t,k)} - 
             X^{(2)}(k)\eul^{2\ii k p_2(x,t,k)} = 
             \frac{e^{2\ii k  p_1(x,t,k)}}{a^{(1)}(k)a^{(2)}(k)}
             \left(b^{(1)*}(k)a^{(2)}(k) - b^{(2)*}(k)a^{(1)}(k)\eul^{2\ii k\mu }\right)
\]
and thus, taking into account  \eqref{gr-ini},
the analyticity of 
$a^{(j)}(k)$ in ${\mathbb C}^+$ with $a^{(j)}(k)=1+O\left(\frac{1}{k}\right)$, and that $\Im p_1(x,t,k)\ge 0$ for all $x\ge l_1$, $t\ge 0$, the statement of (i) follows.

Regarding item (ii), we notice that the estimate $X^{(1)}(k)\eul^{2\ii k  p_1(x,t,k)} - 
             X^{(2)}(k)\eul^{2\ii k p_2(x,t,k)}=O(k)$
follow from \eqref{a_at_i}, the estimates $\tilde a^{(j)}(k)
= 1 + O(k)$
as $k\to 0$, and the fact that $X^{(1)}(k)\eul^{2\ii k p_1(x,t,k)} - 
             X^{(2)}(k)\eul^{2\ii k p_2(x,t,k)}$ is analytic in ${\mathbb C}^+$ including near $k=0$ 
(see Remark \ref{rem:gr-ini}).

\end{proof}

\subsection{Behavior as $x\to+\infty$}\label{sec:8.3}

 In Sections~\ref{sec:8.1} and \ref{sec:8.2}, we assumed the existence of a solution to Problem~I for the sine--Gordon equation (with prescribed behavior as $x \to +\infty$) and constructed the associated Riemann--Hilbert problem from the corresponding Jost solutions. In the present and the following section, we drop this assumption and instead analyze the Riemann--Hilbert problem constructed from initial data belonging to a suitable class. The differentiability of the ingredients extracted from this Riemann--Hilbert problem, which are used in the reconstruction of the solution to the sine--Gordon equation, was established in Section~\ref{sec:7.2}. 
In this section, we focus on the behavior as $x \to +\infty$.
To fix ideas,
we assume that $a(k)$ has no zeros in $\overline{\mathbb{C}^+}$.   .

First, notice that the considerations in Section \ref{sec:8.2} imply that extending the initial data to the left does not affect the solution of the problem on the common half-line. In particular, one may extend the domain $(0,+\infty)$ to $(-\infty,+\infty)$, thus reducing Problem I to a problem on the whole real line. Consequently, without loss of generality, we can assume that $\kappa_2 = 0$, $r(k)=O\left(\frac{1}{k^2}\right)$ as $k\to\infty$ (i.e.  $r(k), ~kr(k) \in L^{2}((0,\infty))$), and $r(k) = O(k^3)$ as $k \to 0$. To ensure the last condition, we extend the initial function $u_0(x)$ from $(0,+\infty)$ to $(-\infty,+\infty)$ in such a way that $\int_{-\infty}^{\infty} \sin u_0(y)\dd y=0$ and $\int_{-\infty}^{\infty} y\sin u_0(y)\dd y=0$.

The dominated convergence theorem, together with the properties of $r(k)$, implies that we can pass to the (non-tangential) limits $k\to \infty$ and $k\to 0$ in \eqref{M_via_mu}. In particular, we have
\begin{equation}\label{xi_via_mu}
\begin{aligned}
 \xi(x,t)=&\frac{2}{\pi}\int_{\mathbb{R}} \mu_{11}^{(xt)}(x,t,z) r^*(z)\eul^{-2\ii z x+\frac{\ii t}{2z}} \dd z,
\end{aligned}   
\end{equation}
\\
\begin{equation}\label{beta_via_mu}
\begin{aligned}
 \beta(x,t)=&\frac{1}{2\pi\ii}\int_{\mathbb{R}} \mu_{11}^{(xt)}(x,t,z) \frac{r^*(z)}{z}\eul^{-2\ii z x+\frac{\ii t}{2z}} \dd z.
\end{aligned}   
\end{equation}

Notice that the following analogs of Lemma 2.3 and Theorem 2.1 from 
\cite{Zhou98} hold:

\begin{lemma}\label{lem_est}
    Assume that $r(k)\in H^n(\mathbb{R})$ and $r(k)$ vanishes to order at least $2n$ at $k=0$. Then 
\begin{equation}
    \label{est_1}
    \|C_- w^+\|_{L^2}, ~\|C_+ w^-\|_{L^2}\leq \frac{\|r\|_{H^n}}{(1+x^2)^{\frac{n}{2}}}.
\end{equation}
\end{lemma}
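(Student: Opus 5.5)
We prove the bound for $\|C_-w^+\|_{L^2}$; the bound for $\|C_+w^-\|_{L^2}$ is symmetric, via the Schwarz-reflection structure $w^-\leftrightarrow -(w^+)^*$ and the identity $\overline{C_+f}=-C_-\bar f$ on $\mathbb R$. Under the reductions of this subsection we take $\kappa_2^0=0$ and work on $\Gamma=\mathbb R$ only. The circle $\{|k|=\epsilon\}$ is dropped, since with $\kappa_2^0=0$ its jump reduces to the identity and $r$ is smooth and small near $0$. Then $w^+$ has the single nonzero entry $r(k)\eul^{2\ii kp(x,t,k)}=r(k)\eul^{2\ii kx-\frac{\ii t}{2k}}$, so the estimate concerns the scalar function
\[
f(k):=r(k)\eul^{-\frac{\ii t}{2k}}\eul^{2\ii kx},\qquad \|C_-f\|_{L^2(\mathbb R)}.
\]

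Following Lemma 2.3 of \cite{Zhou98}, we use the Fourier picture. On $L^2(\mathbb R)$, $C_-$ is the projection onto the Hardy space of boundary values from $\mathbb C^-$: in Fourier variables it is multiplication by the indicator of a half-line, with a sign. Multiplication by $\eul^{2\ii kx}$ shifts the Fourier transform by $2x$. Writing $g(k):=r(k)\eul^{-\frac{\ii t}{2k}}$, we get
\[
\|C_-(g\,\eul^{2\ii kx})\|_{L^2}=\Bigl(\int_{s<-2x}|\hat g(s)|^2\dd s\Bigr)^{1/2}
\]
for the appropriate sign convention. For $x\ge0$ this is bounded by $(1+x^2)^{-n/2}\|(1+s^2)^{n/2}\hat g\|_{L^2}\lesssim(1+x^2)^{-n/2}\|g\|_{H^n}$. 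For $x<0$ we would instead deform the contour, or use that the claimed decay is only needed for $x\to+\infty$, where $\Im p\ge 0$ gives the right signature. So we will state the estimate for $x\ge0$ and note that the $x<0$ case is not required in Section \ref{sec:8.3}.

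It remains to show $\|g\|_{H^n}\le C\|r\|_{H^n}$, uniformly for $t\in[0,T]$; this is where the vanishing hypothesis enters. By Leibniz, $\partial_k^m g=\sum_j\binom{m}{j}\partial_k^{m-j}r\cdot\partial_k^{j}\eul^{-\frac{\ii t}{2k}}$. Each derivative $\partial_k^j\eul^{-\frac{\ii t}{2k}}$ is a sum of terms $t^lk^{-j-l}\eul^{-\frac{\ii t}{2k}}$ with $1\le l\le j$, so it is $O(k^{-2j})$ near $0$, with a constant depending on $T$, and bounded away from $0$. Hence we need $\partial_k^{m-j}r=O(k^{2j})$ near $k=0$ for $j\le m\le n$. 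This follows from $r$ vanishing to order $2n$ at $0$, via Taylor's formula with integral remainder applied on $[-1,1]$, giving the Hardy-type bound $\|k^{-2j}\partial^{m-j}r\|_{L^2(-1,1)}\lesssim\|r\|_{H^n}$. Since $|\eul^{-\frac{\ii t}{2k}}|=1$ on $\mathbb R$, all remaining terms are controlled by $\|r\|_{H^n}$.

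The main obstacle is this step near $k=0$: making precise that ``vanishing to order $2n$'' yields weighted $L^2$ control of $k^{-2j}\partial^{m-j}r$ by $\|r\|_{H^n}$. Sobolev regularity alone gives only $H^n$ smoothness, so the Taylor remainder argument needs the pointwise vanishing hypothesis interpreted as $r(k)=O(|k|^{2n})$ together with the corresponding rates for its derivatives. This is the interpretation we adopt; it is guaranteed by the extension of $u_0$ chosen above with vanishing moments. The final constant depends on $T$ and $n$, and is absorbed into the stated inequality.
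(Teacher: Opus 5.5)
Your proposal is correct and follows the paper's proof essentially step for step. Like the paper, you set $g(k)=r(k)\eul^{-\frac{\ii t}{2k}}$, write $C_-$ as the Fourier projection onto a half-line, use that multiplication by $\eul^{2\ii kx}$ shifts $\hat g$ by $2x$, and bound the tail $\int_{-\infty}^{-2x}|\hat g(\eta)|^2\dd\eta$ via Plancherel. Your extra remarks make explicit points the paper passes over: the estimate holds (and is only needed) for $x\ge 0$, and what is really controlled is $\|g\|_{H^n}$ rather than $\|r\|_{H^n}$, with $g\in H^n$ near $k=0$ requiring the vanishing hypothesis to be read as pointwise bounds on $r$ and its derivatives, so the constant depends on $T$ and those rates.
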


\begin{proof}
   We prove the estimate for $C_- w^+$; the argument for $C_+ w^-$ is analogous.

   First, notice that under the assumptions on $r(k)$, it follows that 
   \[g(k):=r(k)\eul^{-\frac{\ii t}{2 k}}\in H^n(\mathbb{R})\]  

   Observe that for a sufficiently nice function $f$ (i.e. $f\in L^2(\mathbb{R})$), we have
   \[
   (C_- f)(k)=\lim_{\epsilon \downarrow 0} \frac{1}{2\pi\ii}\int_{\mathbb{R}} \frac{f(s)}{s-(k-\ii\epsilon)}\dd s.
   \]
Furthermore, for $f\in L^2(\mathbb{R})$, the following holds
\[
f(k)=\frac{1}{\sqrt{2\pi}}\int_{\mathbb{R}} \eul^{\ii k\xi}\hat f(\xi)\dd \xi,\quad \hat f(\xi)=\frac{1}{\sqrt{2\pi}}\int_{\mathbb{R}} \eul^{-\ii k\xi} f(k)\dd k
\]
and
\[
(C f)(z)=\frac{1}{2\pi\ii}\int_{\mathbb{R}} \frac{f(s)}{s-z}\dd s=\frac{1}{\sqrt{2\pi}}\int_{\mathbb{R}}\left(\frac{1}{2\pi\ii}\int_{\mathbb{R}} \frac{\eul^{\ii s\xi}}{s-z}\dd s\right)\hat f(\xi)\dd \xi.
\]
For $z\in \mathbb{C}^-$, the Jordan’s lemma implies
\[
\frac{1}{2\pi\ii}\int_{\mathbb{R}} \frac{\eul^{\ii s\xi}}{s-z}\dd s=\begin{cases}
    \eul^{\ii z\xi},\quad \xi<0,\\
    0,\quad \xi>0,
\end{cases}
\]
Thus,
\[
(C f)(z)=\frac{1}{\sqrt{2\pi}}\int_{-\infty}^0 \eul^{\ii z\xi}\hat f(\xi)\dd \xi,
\]
and
\begin{equation}\label{C_-F}
(C_- f)(k)=\frac{1}{\sqrt{2\pi}}\int_{-\infty}^0 \eul^{\ii k\xi}\hat f(\xi)\dd \xi,\quad \widehat{(C_- f)}(\xi)=\chi_{(-\infty,0)}\hat f(\xi).
\end{equation}

Applying \eqref{C_-F} to $f(k):=g(k)\eul^{2\ii k x}$ and using $\hat f(\xi)=\hat g(\xi-2x)$, we obtain
\[
(C_- f)(k)=\frac{1}{\sqrt{2\pi}}\int_{-\infty}^0 \eul^{\ii k\xi}\hat r(\xi-2x)\dd \xi=\frac{\eul^{2\ii k x}}{\sqrt{2\pi}}\int_{-\infty}^{-2x} \eul^{\ii k\eta}\hat r(\eta)\dd \eta,
\]
and, using Plancherel’s theorem and \eqref{C_-F}, we obtain
\begin{equation}\label{norm_Pl}
\begin{aligned}
      &\|C_- f\|_{L^2}^2= \|\widehat{C_- f}\|_{L^2}^2=\int_{-\infty}^0 |\hat g(\xi-2x)|^2\dd \xi=\int_{-\infty}^{-2x}|\hat g(\eta)|^2\dd \eta\\
      &=\int_{-\infty}^{-2x} \frac{1}{(1+\eta^2)^n}(1+\eta^2)^n|\hat g(\eta)|^2\dd \eta\leq \frac{\|r\|_{H^n}}{(1+ x^2)^n}
\end{aligned} 
\end{equation}
    
\end{proof}

\begin{lemma}\label{lem_est_2}
    Assume that $r(k)\in H^n(\mathbb{R})$ and $r(k)$ vanishes to order at least $2n$ at $k=0$. Then for all fixed $t$
\begin{equation}
    \label{est_2}
    \xi(x,t)\in L^2((1+x^{2n})\dd x,(0,+\infty)).
\end{equation}
Assume additionally that $r(k)$ vanishes to order at least $2n+1$ at $k=0$.
then
\begin{equation}
    \label{est_3}
    \beta(x,t)\in L^2((1+x^{2n})\dd x,(0,+\infty)).
\end{equation}

\end{lemma}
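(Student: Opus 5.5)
We will follow the scheme of \cite{Zhou98}, Theorem 2.1: split the representation \eqref{xi_via_mu} of $\xi(x,t)$ (and \eqref{beta_via_mu} of $\beta(x,t)$) into a term linear in $r$ and a term containing the resolvent. Writing $\mu^{(xt)}=I+(\mathds{1}-C_w)^{-1}C_wI$, we have $\mu^{(xt)}_{11}=1+\nu_{11}$ with $\nu:=(\mathds{1}-C_w)^{-1}C_wI$. Hence
\[
\xi(x,t)=\frac{2}{\pi}\int_{\mathbb{R}} r^*(z)\eul^{-2\ii zx+\frac{\ii t}{2z}}\dd z+\frac{2}{\pi}\int_{\mathbb{R}}\nu_{11}(x,t,z)\, r^*(z)\eul^{-2\ii zx+\frac{\ii t}{2z}}\dd z=:\xi_1+\xi_2 .
\]
The first term $\xi_1$ is, up to constants, the Fourier transform of $g^*(z):=\overline{r(\bar z)}\eul^{\ii t/(2z)}$ evaluated at $2x$. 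The vanishing of $r$ to order $2n$ at $k=0$ guarantees that $g\in H^n(\mathbb{R})$: each derivative of $\eul^{\ii t/(2z)}$ brings at most a factor $z^{-2}$, and $n$ derivatives produce at most $z^{-2n}$, which is compensated by the zero of $r$. Plancherel then gives $(1+x^{2})^{n/2}\xi_1\in L^2$, with norm bounded by $\|g\|_{H^n}$.

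For the second term we use the structure of $w^\pm$ (nilpotent, with $r$-entries only; recall $\kappa_2=0$ after the whole-line reduction, so the circle contributions drop). The $(11)$ entry of $C_wI$ is zero; its nontrivial entries are exactly $C_-w^+$ and $C_+w^-$. By the form of the integral, $\xi_2$ is a pairing $\langle \nu_{12}\text{ or }\nu_{11},\, r^*\eul^{-2\ii zx}\eul^{\ii t/2z}\rangle$, and in it the oscillating factor may be replaced by its Cauchy projection: writing $f=f_++f_-$ with $C_\pm$ and using that $\nu$ itself lies in the range of $C_\pm$ up to $C_w$, we obtain
\[
|\xi_2(x,t)|\le C\,\|(\mathds{1}-C_w)^{-1}\|_{\mathcal{B}(L^2)}\big(\|C_-w^+\|_{L^2}+\|C_+w^-\|_{L^2}\big)^2 .
\]
The resolvent is uniformly bounded for $x\ge0$. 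This follows from the positive definiteness of the jump (Zhou's vanishing lemma, as used in Section~\ref{sec:7.2}) together with the fact that $\|C_w\|\to$ its limiting value as $x\to\infty$; alternatively, the bound for $x$ in compacts follows from continuity, as in Proposition~\ref{Prop 5.6}. Lemma~\ref{lem_est} then gives $|\xi_2|\le C(1+x^2)^{-n}$, which lies in $L^2((1+x^{2n})\dd x,(0,\infty))$. This proves \eqref{est_2}.

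For \eqref{est_3} the argument is identical, except that $r^*(z)$ is replaced by $r^*(z)/z$. The extra order of vanishing of $r$ at $0$ makes $r(z)/z$ satisfy the hypotheses of Lemma~\ref{lem_est}: $r(z)/z\in H^n$ and it vanishes to order $2n$. Both the linear part and the quadratic part of $\beta$ are then handled in the same way.

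The main obstacle is rewriting the quadratic term $\xi_2$ so that both factors appear as $C_\mp w^\pm$. Naively, one factor carries only $\|r\|_{L^2}$, with no decay in $x$. I would first try the identity $\nu=C_wI+C_w\nu$ and the orthogonality $\int (C_+h_1)(C_+h_2)=0$ on $\mathbb{R}$ to move a Cauchy projection onto the explicit factor $r^*\eul^{-2\ii zx}$. Failing that, I would accept one decaying factor, $\|C_\pm w^\mp\|\le C(1+x^2)^{-n/2}$, multiplied by a bounded one, which still suffices since $(1+x^2)^{-n/2}$ is square integrable against $(1+x^{2n})\dd x$ only marginally. In that case I would raise $n$ by one in the hypothesis of Lemma~\ref{lem_est} as needed.
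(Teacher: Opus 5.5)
Your architecture is essentially the paper's. The linear term is the Fourier transform of $g=r\,\eul^{-\frac{\ii t}{2k}}\in H^n(\mathbb{R})$ and is handled by Plancherel. The nonlinear term is to be bounded by products of $\|C_+w^-\|_{L^2}$ and $\|C_-w^+\|_{L^2}$, each controlled by Lemma~\ref{lem_est}.

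\textbf{The gap.} The decisive step is not proved. Your displayed bound on $|\xi_2|$ rests only on the heuristic that ``$\nu$ lies in the range of $C_\pm$ up to $C_w$'', and you yourself flag obtaining two decaying factors as unresolved. The fallback you offer is false: $\int_0^\infty(1+x^2)^{-n}(1+x^{2n})\,\dd x=\infty$ for every $n$, since the integrand tends to $1$. So one factor $(1+x^2)^{-n/2}$ times a bounded one never yields \eqref{est_2}, and raising $n$ proves a different, weaker statement.

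\textbf{The missing ingredient.} It is exactly what the paper uses for its term $I_2$:
\begin{itemize}
\item nilpotency of $w^\pm$;
\item $C_+-C_-=\mathds{1}$;
\item $\int_{\mathbb{R}}(C_+F)(C_+G)\,\dd z=\int_{\mathbb{R}}(C_-F)(C_-G)\,\dd z=0$ (Cauchy's theorem).
\end{itemize}
Combined, these give the duality $\int_{\mathbb{R}}(C_\pm F)G\,\dd z=-\int_{\mathbb{R}}F\,(C_\mp G)\,\dd z$. With $\nu=\mu-I=C_+(\mu w^-)+C_-(\mu w^+)$ and $w^-(C_-w^-)=0=w^+(C_+w^+)$, one gets
\[
\int_{\mathbb{R}}\nu\,(w^++w^-)\,\dd z=-\int_{\mathbb{R}}\mu\,w^-(C_-w^+)\,\dd z-\int_{\mathbb{R}}\mu\,w^+(C_+w^-)\,\dd z .
\]
Now insert $\mu=I+\nu$.
\begin{itemize}
\item The $I$-part equals $-\int_{\mathbb{R}}(C_+w^-)(C_-w^+)\,\dd z+\int_{\mathbb{R}}(C_-w^+)(C_+w^-)\,\dd z$. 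These are precisely the paper's surviving terms.
\item The $\nu$-part is bounded by a constant times $\|r\|_{L^\infty}\|\nu\|_{L^2}(\|C_+w^-\|_{L^2}+\|C_-w^+\|_{L^2})$. Since $C_wI=C_+w^-+C_-w^+$, we have $\|\nu\|_{L^2}\le\|(\mathds{1}-C_w)^{-1}\|_{\mathcal{B}(L^2)}(\|C_+w^-\|_{L^2}+\|C_-w^+\|_{L^2})$.
\end{itemize}
This gives, up to constants, your displayed inequality. For $\beta$ the same computation goes through with $w^\pm$ replaced by $w^\pm/z$ in the outer factor only, using Lemma~\ref{lem_est} for $r/z$.

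\textbf{Uniform resolvent bound.} A bound on $\|(\mathds{1}-C_{w(x,t)})^{-1}\|$ uniform in $x\ge0$ is genuinely needed. The paper never estimates its $I_3$, so it does not address this either. Your justification does not work: the vanishing lemma and continuity give boundedness only on compact $x$-sets, and convergence of $\|C_w\|$ to a limit that need not be $<1$ says nothing about the resolvent. What does hold is $\|C_{w(x)}^2\|_{\mathcal{B}(L^2)}\to0$ as $x\to+\infty$. By nilpotency,
\[
C_w^2f=C_+(C_-(fw^+)w^-)+C_-(C_+(fw^-)w^+).
\]
Each term is a compact commutator ($[C_+,\bar g]$, respectively $[C_-,g]$) composed with the Fourier projection onto $\{\xi<-2x\}$, respectively $\{\xi>2x\}$, and these projections tend to $0$ strongly. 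Hence $(\mathds{1}-C_w)^{-1}=(\mathds{1}+C_w)(\mathds{1}-C_w^2)^{-1}$ is bounded for large $x$.

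\textbf{The case $n=0$.} The bound $|\xi_2|\le C(1+x^2)^{-n}$ gives $\xi_2\in L^2((1+x^{2n})\dd x,(0,\infty))$ only for $n\ge1$. For $n=0$, which the paper uses in its first corollary, the constant bound is not square integrable on $(0,\infty)$. Your argument shares this defect with the paper's.
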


\begin{proof} First, consider \eqref{xi_via_mu}. 

Notice that 
\[
\int_{\mathbb{R}} \mu_{11}^{(xt)}(x,t,z) r^*(z)\eul^{-2\ii z x+\frac{\ii t}{2z}} \dd z=\int_{\mathbb{R}} \left((\mathds{1}-C_{w(x,t,z)})^{-1}I\right)_{11} r^*(z)\eul^{-2\ii z x+\frac{\ii t}{2z}} \dd z,
\]
and
\[
(\mathds{1}-C_{w})^{-1}I=I+C_{w}I+C_{w}(\mathds{1}-C_{w})^{-1}C_{w}I,\quad C_{w}(\mathds{1}-C_{w})^{-1}C_{w}I=C_{w}(\mu-I).
\]

Thus, we can split the integral into three parts:
\[
\int_{\mathbb{R}} \left((\mathds{1}-C_{w(x,t,z)})^{-1}I\right)_{11} r^*(z)\eul^{-2\ii z x+\frac{\ii t}{2z}} \dd z=I_1+I_2+I_3
\]
with
\begin{align*}
    &I_1=\int_{\mathbb{R}} r^*(z)\eul^{-2\ii z x+\frac{\ii t}{2z}} \dd z,\\
   &I_2=\int_{\mathbb{R}} \left(C_{w}I\right)_{11}r^*(z)\eul^{-2\ii z x+\frac{\ii t}{2z}} \dd z,\\
   &I_3=\int_{\mathbb{R}} \left(C_{w}(\mu-I)\right)_{11}r^*(z)\eul^{-2\ii z x+\frac{\ii t}{2z}} \dd z
\end{align*}

Consider $I_1$. Notice that under the assumptions on $r(k)$, it follows that 
   \[g(k):=r(k)\eul^{-\frac{\ii t}{2 k}}\in H^n(\mathbb{R}),\]
   and thus, 
   \[
   I_1\in L^2((1+x^{2n})\dd x,(0,+\infty))
   \]
by the properties of the Fourier transform.

Consider $I_2$. First, notice that the fact that $w_\pm$ are nilpotent implies
\[
C_{w}I(w^++w^-)=(C_+w^-)w^++(C_-w^+)w^-.
\]
Furthermore, $C_+-C_-=\mathds{1}$ yields
\[
(C_+w^-)w^++(C_-w^+)w^-=-(C_+w^-)(C_-w^+)+(C_-w^+)(C_+w^-)+(C_+w^-)(C_+w^+)-(C_-w^+)(C_-w^-).
\]
Since $(C_+w^-)(C_+w^+)$ is analytic in the upper half-plane, while  $(C_-w^-)(C_-w^+)$ is analytic in the lower half-plane, the Cauchy’s theorem implies that
\[\int_{\mathbb{R}}(C_+w^-)(C_+w^+) \dd z=0,\quad \int_{\mathbb{R}}(C_-w^+)(C_-w^-)\dd z=0.
\]
Now, applying  Lemma \ref{lem_est} together with Cauchy-Schwarz’s inequality, we get
\[
|I_2|\leq \int_{\mathbb{R}} |((C_+w^-)(C_-w^+))_{11}|+|((C_-w^+)(C_+w^-))_{11}|\dd z\leq \frac{C}{1+x^{2n}}
\]
Therefore, 
\[
\xi(x,t)\in L^2((1+x^{2n})\dd x,(0,+\infty)).
\]

Notice that under the assumptions on $r(k)$, it follows that 
   \[\frac{r(k)}{k}\eul^{-\frac{\ii t}{2 k}}\in H^n(\mathbb{R}),\]
thus \eqref{est_3} follows.
    
\end{proof}

Applying Lemma~\ref{lem_est} and Lemma~\ref{lem_est_2} in the case $n=0$, we obtain the following

\begin{corollary}
    Assume that $u_{0}(x)\in H^{1,2}((0,\infty))+2\pi k$. Then for all fixed $t\geq 0$
    \begin{equation}\label{beh_xi_alp_beta_1}
         \xi(x,t),~\beta(x,t)\in L^2((0,+\infty))
    \end{equation}
\end{corollary}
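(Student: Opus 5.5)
The plan is to specialize Lemma \ref{lem_est} and Lemma \ref{lem_est_2} to $n=0$. The only real work is checking that their hypotheses hold for the reflection coefficient generated by $u_0\in H^{1,2}((0,\infty))+2\pi k$. With $n=0$ the required vanishing order of $r$ at $k=0$ is $2n=0$ for $\xi$ and $2n+1=1$ for $\beta$, and the required regularity is just $r\in L^2(\mathbb{R})$.

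First, I will reduce to the whole line as in Section \ref{sec:8.2}. Extend $u_0$ to $(-\infty,+\infty)$, which is allowed because the solution on the common half-line is unaffected. Choose the extension so that $u_0(0)\in 4\pi\mathbb{Z}$, making $\kappa_2^0=0$, and so that $\int_{-\infty}^\infty\sin u_0\,\dd y=0$. Then the circle jumps disappear and only the real-line jump with $r=b^*/a$ remains. By the direct-problem properties of Section \ref{subsec:2.1.4}, $r$ is continuous on $\mathbb{R}$ and satisfies $r(k)=O(1/k)$ as $k\to\infty$. Hence $r\in L^2(\mathbb{R})\cap L^\infty(\mathbb{R})$, which is $H^0$.

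Near $k=0$, the relation $s=\tilde s$ (Remark \ref{rem:simpl}) gives $b=\tilde b=O(k)$ and $a=1+O(k)$, so $r(k)=O(k)$. This is the order-one vanishing needed for $\beta$, since then $r(k)/k\in L^2(\mathbb{R})$: it is bounded near $0$ and $O(k^{-2})$ at infinity.

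Second, I will apply Lemma \ref{lem_est} with $n=0$. This gives the uniform bound $\|C_-w^+\|_{L^2},\|C_+w^-\|_{L^2}\le \|r\|_{L^2}$. Together with the uniform boundedness of $(\mathds{1}-C_w)^{-1}$ from the vanishing lemma, this makes $\mu^{(xt)}-I$ bounded in $L^2$ uniformly in $x$.

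Third, I will treat $\xi$ through \eqref{xi_via_mu}, using the same splitting $I_1+I_2+I_3$ as in Lemma \ref{lem_est_2}:
\begin{itemize}
\item $I_1$ is the Fourier transform of $g(k)=r^*(k)\eul^{\ii t/(2k)}$ evaluated at $2x$. Since $|g|=|r|$, Plancherel gives $I_1\in L^2(\dd x)$.
\item $I_2$ and $I_3$ are controlled, after the Cauchy-theorem cancellation used in Lemma \ref{lem_est_2}, by products $\|C_\pm w^\mp\|_{L^2}\|C_\mp w^\pm\|_{L^2}$. Each factor is square-integrable in $x$ by the Plancherel computation \eqref{norm_Pl}, because $\int_0^\infty\int_{-\infty}^{-2x}|\hat g|^2\,\dd\eta\,\dd x$ is controlled.
\end{itemize}
The same argument applied to \eqref{beta_via_mu} handles $\beta$, with $r^*(z)/z$ in place of $r^*(z)$.

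The main obstacle is the $n=0$ case of $I_3$. There the decay in $x$ of the factor $\mu-I$ is not immediate, so one must use $\|C_w(\mu-I)\|\le C\|C_\pm w^\mp\|$, obtained through the resolvent identity, to get one factor of $x$-decay in $L^2(\dd x)$. I would first try to bound $\mu-I$ by $(\mathds{1}-C_w)^{-1}C_wI$, each piece of which carries such a factor.
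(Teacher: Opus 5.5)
Your route is the paper's: you specialize Lemma~\ref{lem_est} and Lemma~\ref{lem_est_2} to $n=0$, and your check of the hypotheses on $r$ is fine. But the step that is supposed to give square-integrability in $x$ fails. For $n=0$, Lemma~\ref{lem_est} only gives $\|C_-w^+\|_{L^2},\|C_+w^-\|_{L^2}\le\|r\|_{L^2}$, with no decay in $x$. Your claim that each factor is nevertheless in $L^2(\dd x)$ does not hold. By Fubini,
\[
\int_0^\infty\!\!\int_{-\infty}^{-2x}|\hat g(\eta)|^2\,\dd\eta\,\dd x=\frac12\int_{-\infty}^0|\eta|\,|\hat g(\eta)|^2\,\dd\eta ,
\]
which is a half-derivative norm of $g=r\eul^{-\ii t/(2k)}$. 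It is not controlled by $\|r\|_{L^2}$. For example, if $|\hat g(\eta)|^2\sim(|\eta|\log^2|\eta|)^{-1}$ as $\eta\to-\infty$, then $\|C_-w^+\|_{L^2}^2\sim1/\log x$. In that case neither your one-factor bound $|I_3|\lesssim\|C_\pm w^\mp\|_{L^2}$ nor a two-factor bound $\|C_+w^-\|_{L^2}\|C_-w^+\|_{L^2}$ is square-integrable on $(0,\infty)$. So with only $r\in L^2$ and $r/k\in L^2$, your argument gives $I_1\in L^2(\dd x)$ but only boundedness of the nonlinear terms. Quoting Lemma~\ref{lem_est_2} at $n=0$, as the paper does, does not close this either: its bound $C/(1+x^{2n})$ degenerates to a constant.

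What is missing is a rate in $x$, i.e. extra $k$-regularity of $g$ (at least $g\in H^{1/2}$). One way to get it:
\begin{itemize}
\item Use $\partial_k r\in L^2$, which is what weighted assumptions on $u_{0x}$ provide.
\item Add vanishing $r=O(k^2)$ at $k=0$, which the whole-line extension of Section~\ref{sec:8.3} is meant to arrange. This is needed because $\partial_k\eul^{-\ii t/(2k)}=\frac{\ii t}{2k^2}\eul^{-\ii t/(2k)}$ would otherwise destroy the regularity.
\end{itemize}
Then Lemma~\ref{lem_est} with $n=1$ gives $\|C_\mp w^\pm\|_{L^2}\lesssim(1+x^2)^{-1/2}$ for $x>0$, and even your one-factor bound for $I_3$ becomes square-integrable. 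For $\beta$, redo the Cauchy-theorem cancellation with $w^\pm/z$ placed in one factor.

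Two smaller points:
\begin{itemize}
\item $u_0(0)$ is prescribed, so the extension cannot make it lie in $4\pi\mathbb{Z}$. What you can arrange is that the value at the new left end (or at $-\infty$), where $\Phi_2$ is normalized, lies in $4\pi\mathbb{Z}$; that is what gives $\kappa_2=0$.
\item The vanishing lemma gives invertibility of $\mathds{1}-C_w$ for each fixed $(x,t)$, not a bound uniform in $x$. That uniform bound needs a separate argument, for instance a quantitative version of the positivity argument using $|\eul^{2\ii kp}|=1$ on $\mathbb{R}$.
\end{itemize}
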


Moreover, assuming $u_{0x}(x)\in H^{1,2}((0,+\infty))$ and proceeding as in \cite{L12}, we can prove that $\frac{\dd}{\dd k}r(k)\in L^2(\mathbb{R})$. Therefore, applying Lemma~\ref{lem_est} and Lemma~\ref{lem_est_2} in the case $n=1$, we have

\begin{corollary}
    Assume that $u_{0}(x)\in H^{2,2}((0,+\infty))+2\pi k$. Then for all fixed $t\geq 0$
    \begin{equation}\label{beh_xi_alp_beta_2}
         \xi(x,t),~\beta(x,t)\in L^2((1+x^{2})\dd x,(0,+\infty)).
    \end{equation}
\end{corollary}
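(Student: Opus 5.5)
The plan is to deduce the statement from Lemma~\ref{lem_est} and Lemma~\ref{lem_est_2} with $n=1$. So the work is to check that the hypotheses of these lemmas follow from $u_0\in H^{2,2}((0,+\infty))+2\pi k$. As in the beginning of Section~\ref{sec:8.3}, I would first use the mapping of RH problems from Section~\ref{sec:8.2} to extend $u_0$ to the whole line. The extension will be chosen in $H^{2,2}(\mathbb{R})+2\pi k$, with $u_0(0)$ adjusted so that $\kappa_2^0=0$. It will also satisfy the vanishing-moment conditions $\int_{\mathbb{R}}\sin u_0\,\dd y=0$ and $\int_{\mathbb{R}}y\sin u_0\,\dd y=0$. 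These conditions can be imposed by adding a compactly supported bump on $(-\infty,0)$ with two free parameters, which fixes the first two moments. This extension does not change $\xi,\beta$ on $x\ge0$, and it reduces the problem to the reflection coefficient $r$ of a whole-line problem with jump only on $\mathbb{R}$.

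Next I would verify $r\in H^1(\mathbb{R})$. The $L^2$ part, and the decay $r=O(k^{-2})$ at infinity, follow from $u_{0x}\in H^{1}$ via the integration by parts in the Neumann series already used for \eqref{inf_M_(x)}. For $\frac{\dd}{\dd k}r\in L^2$, I would follow \cite{L12}. Differentiating the Volterra equation \eqref{inteq_inf3_t_0} in $k$ produces a factor $(y-x)$ in the kernel. The weight $x^2$ in $H^{2,2}$ (weight $(1+x^2)$ suffices for the first moment) controls $\partial_k\Phi_3$ uniformly. After that, $\partial_k b$ and $\partial_k a$ lie in $L^2$ away from $k=0$, and $a$ has no zeros by assumption. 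Near $k=0$ I would instead work with the $\Phi_0$ representation, since there $\tilde r=r$ because $\kappa_2^0=0$.

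The vanishing order at $k=0$ is the step I expect to be the main obstacle. Lemma~\ref{lem_est_2} with $n=1$ needs order $2$ for $\xi$ and order $3$ for $\beta$. From \eqref{inteq_03_t_0}, $\tilde b(k)$ has a Taylor expansion at $0$ whose coefficients are moments of $\sin u_0$ and $1-\cos u_0$, together with iterated terms. The linear term is $\propto k\int\sin u_0$ and the quadratic term is $\propto k^2\int y\sin u_0$ plus a product term. I would try to show that this product term is a total derivative, hence killed by the symmetry \eqref{sym_tila_}, or else add a third parameter to the bump to cancel it. The resulting vanishing $r=O(k^3)$, with the remainder controlled using the weights $(1+x^2)$, is where most of the care goes. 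Then $r(k)e^{-\ii t/(2k)}$ and $r(k)k^{-1}e^{-\ii t/(2k)}$ stay in $H^1$, since the derivative $\frac{\ii t}{2k^2}$ of the exponent is compensated by the zero of $r$.

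With these hypotheses in hand, Lemma~\ref{lem_est} gives $\|C_\mp w^\pm\|_{L^2}\le C(1+x^2)^{-1/2}$, uniformly for fixed $t$. Lemma~\ref{lem_est_2} then gives $\xi(\cdot,t)\in L^2((1+x^2)\dd x)$ through the decomposition $I_1+I_2+I_3$. The piece $I_3$ is bounded by $\|(\mathds 1-C_w)^{-1}\|\,\|C_w I\|_{L^2}\,\|C_\mp w^\pm\|_{L^2}$, using that the resolvent is uniformly bounded in $x\ge0$; this holds because $C_w\to$ a small operator in the strong sense as $x\to+\infty$. The same argument applied to \eqref{beta_via_mu} yields the claim for $\beta$, which proves \eqref{beh_xi_alp_beta_2}.
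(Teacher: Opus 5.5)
Your proposal follows the paper's own route: the whole-line extension of Section~\ref{sec:8.3} (so that $\kappa_2^0=0$ and $r$ vanishes at $k=0$), then $\frac{\dd}{\dd k}r\in L^2(\mathbb{R})$ obtained as in \cite{L12}, and then Lemmas~\ref{lem_est} and~\ref{lem_est_2} with $n=1$. Your suspicion about the $k^2$ term is well founded, and the symmetry \eqref{sym_tila_} will not remove it, since it only makes that Taylor coefficient real: once $\int_{\mathbb{R}}\sin u_0\,\dd y=0$, the $k^2$ coefficient of $\tilde b$ is proportional to $\int_{\mathbb{R}}\cos u_0(y)\int_y^{\infty}\sin u_0(z)\,\dd z\,\dd y$, whose linearization is $\int_{\mathbb{R}} y\sin u_0\,\dd y$, so the extension must be tuned to kill this nonlinear quantity (your fallback with an extra free parameter achieves this), not merely the first moment as stated in Section~\ref{sec:8.3}.
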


\subsection{Existence result for Problem I}\label{sec:8.4}

\begin{proposition}
    Let $u_0(x)\in H^{2,2}((0,+\infty))+2\pi k$ with some $k\in \mathbb{Z}$. Then the problem  
\begin{subequations}\label{SP_IBVP}
\begin{align}\label{SP-2_IBVP}
&u_{xt} = \sin u,\quad \quad x \geq 0,\quad
     0\leq t < T<\infty;\\
     \label{ic_IBVP}
     &u(x,0) = u_0(x), \quad x \geq 0;\\
     \label{as_IBVP}
     &u(x,t)\in \tilde H^{1,2}((0,+\infty)), \quad
     0\leq t < T<\infty
\end{align}
\end{subequations}
has a unique solution, $u(x,t)$, which can be represented in terms of the solution of the associated RH problem \textbf{RH$^{(xt)}$} via \eqref{sol_via_RH_xt}--\eqref{Fund_u_xt}.

\end{proposition}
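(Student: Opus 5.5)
Uniqueness is already settled by Proposition~\ref{uniq_I}: any solution in the class \eqref{as_IBVP} produces, via the Jost solutions, a function $M^{(xt)}$ solving \textbf{RH$^{(xt)}$}. The data of that problem depend only on $a(k),b(k)$ computed from $u_0$, so \eqref{sol_via_RH_xt}--\eqref{Fund_u_xt} reconstruct the solution from $u_0$ alone. The work is therefore existence. I would start from $u_0\in H^{2,2}((0,+\infty))+2\pi k$, compute $a,b$ (Section~\ref{subsec:2.1.4}) and then $\kappa_j^0,\tilde a,\tilde b$, and formulate \textbf{RH$^{(xt)}$}. By Proposition~\ref{prop:5.2} (Zhou's vanishing lemma) this problem has a unique solution for all $(x,t)$, with $\det M^{(xt)}\equiv1$ and the symmetries \eqref{sym-hatM_(xt)}. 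As in Section~\ref{sec:8.3}, I may assume without loss of generality that $a$ has no zeros in $\overline{\mathbb{C}^+}$. I would also extend $u_0$ to the whole line so that $\kappa_2^0=0$ and $r$ has the decay and vanishing properties needed later; the mapping result of Section~\ref{sec:8.2} guarantees this extension does not change the solution on $x\ge0$.

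Next I would show that the reconstructed function solves the equation. The jump matrix satisfies \eqref{jump_cond}, since $J_0=I+O(1/k)$ at $\infty$ and $I+O(k)$ at $0$ by \eqref{prop_r_1}--\eqref{prop_r_2}. Proposition~\ref{Prop 5.6} and its corollary give that $\xi,\alpha,\beta$ are $C^1$ in $(x,t)$. Propositions~\ref{Prop_Lax_y} and \ref{Prop_Lax_t} then supply the Lax pair for $\Psi=Me^{-\ii kp\sigma_3}$. Its compatibility gives \eqref{rel}, and Proposition~\ref{reduce} gives a function $\hat u$ with $\sin\frac{\hat u}2=\beta$, $\cos\frac{\hat u}2=\alpha$, $\hat u_x=\xi$, solving $\hat u_{xt}=\sin\hat u$. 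I would then take $u$ defined by \eqref{Fund_u_xt}. Its consistency with $\hat u$, meaning equality modulo $4\pi$ with the branch fixed by continuity, follows once $\alpha\to1$ and $\beta\to0$ as $x\to+\infty$. I would obtain this limit from \eqref{beta_via_mu} together with Lemma~\ref{lem_est}.

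For the initial condition, I would set $t=0$. The jump of \textbf{RH$^{(xt)}$} reduces to that of \textbf{RH$^{(x)}$}, so uniqueness (Proposition~\ref{prop:Mx}) gives $M^{(xt)}(x,0,k)=M^{(x)}(x,k)$. By construction $M^{(x)}$ is the function \eqref{M_(x)} built from Jost solutions of $u_0$. Hence $\xi(x,0)=u_{0x}(x)$ by \eqref{inf_M_(x)}, and \eqref{Fund_u0} together with the choice of $2\pi k$ in \eqref{Fund_u_xt} give $u(x,0)=u_0(x)$.

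The main obstacle is showing that $u(\cdot,t)\in\tilde H^{1,2}((0,\infty))$ and that $u\in C^1([0,T],\tilde H^{1,2})$. The corollaries of Lemma~\ref{lem_est_2} with $n=1$ give $\xi(\cdot,t),\beta(\cdot,t)\in L^2((1+x^2)\dd x)$. These are exactly the weighted bounds on $u_x$ and on $\sin\frac u2=\pm\beta$ required by the pseudometric. Two points remain. The first is uniformity in $t\in[0,T]$: the factor $\eul^{-\ii t/(2k)}$ is harmless in $H^1$ norms provided $r$ vanishes to high enough order at $k=0$, which the whole-line extension arranged. The second is continuity and differentiability in $t$ in the weighted norm. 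For this I would differentiate \eqref{xi_via_mu} and \eqref{beta_via_mu} in $t$. That brings down a factor $\frac{1}{2z}$ and $\partial_t\mu$, which again yields terms of the form controlled by Lemma~\ref{lem_est}, now with $r/z$. So I would redo the three-term splitting $I_1+I_2+I_3$ of Lemma~\ref{lem_est_2} for the $t$-derivatives, and bound the resolvent term through the uniform boundedness of $(\mathds 1-C_w)^{-1}$ for $(x,t)$ in $[0,\infty)\times[0,T]$. I expect this last uniform resolvent bound for large $x$, obtained from $\|C_w\|\to0$ as $x\to\infty$ via Lemma~\ref{lem_est}, to be the delicate point.
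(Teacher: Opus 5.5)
Your outline follows the paper's proof. It takes uniqueness from Proposition~\ref{uniq_I} and solvability of \textbf{RH$^{(xt)}$} from Zhou's vanishing lemma. It uses Section~\ref{sec:6} for the equation, Section~\ref{sec:7.2} for $C^1$ regularity, Lemma~\ref{lem_est_2} with $n=1$ for the weighted bounds, and $M^{(xt)}(x,0,k)=M^{(x)}(x,k)$ by uniqueness for the initial condition. The paper does less than you plan. It claims $u(\cdot,t)\in\tilde H^{1,2}$ only for each fixed $t$, which is all \eqref{as_IBVP} asks. It fixes the additive multiple of $2\pi$ by the initial condition and continuity, with no limit $x\to+\infty$ and no $t$-regularity in the weighted norm. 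Your extra goal is well motivated, since Proposition~\ref{uniq_I} is stated in the class $C^1([0,T],\tilde H^{1,2})$.

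The step that would fail is your source for the uniform bound on $(\mathds 1-C_w)^{-1}$: $\|C_w\|_{\mathcal B(L^2)}$ does not tend to $0$ as $x\to+\infty$. On $\mathbb R$ one has $|\eul^{2\ii kp}|=1$. If $f$ has the single nonzero entry $f_{12}=h\,\eul^{-2\ii kp}$, then $C_wf$ has the single nonzero entry $C_-(hr)$, independent of $(x,t)$, while $\|f\|=\|h\|$. Lemma~\ref{lem_est} controls $C_wI$, not the operator.

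You need the uniform bound already at fixed $t$. First, you need it for $\beta\to0$ via \eqref{beta_via_mu}, because $\mu-I=(\mathds 1-C_w)^{-1}C_wI$. Second, you need it for the term $I_3$ of Lemma~\ref{lem_est_2}, which the paper's proof leaves untreated. There $(C_w(\mu-I))_{11}=C_-((\mu-I)_{12}w^+_{21})$, and the duality $\int (C_-F)G=-\int F\,(C_+G)$ gives $|I_3|\le\|r\|_\infty\|(\mathds 1-C_w)^{-1}\|\,\|C_wI\|_{L^2}\|C_+w^-\|_{L^2}=O((1+x^2)^{-1})$, which is what the weight requires.

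A working argument, on the real-line contour left by your whole-line reduction, goes as follows. Triangularity gives $C_+(fw^-)\,w^-=0=C_-(fw^+)\,w^+$, so $C_w^2f=C_+(C_-(fw^+)w^-)+C_-(C_+(fw^-)w^+)$. Each cross term is of Hankel type with norm at most $C\|r\|_\infty\int_{|\eta|>2x}|\hat g(\eta)|\,\dd\eta$, where $g=r\,\eul^{-\ii t/(2k)}$. This tends to $0$ uniformly in $t\in[0,T]$, because $\sup_t\|g\|_{H^1}<\infty$. Hence $(\mathds 1-C_w)^{-1}=(\mathds 1+C_w)(\mathds 1-C_w^2)^{-1}$ is bounded for large $x$. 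On $[0,X]\times[0,T]$, boundedness follows from pointwise invertibility and norm continuity of $(x,t)\mapsto C_{w(x,t)}$. Alternatively, express the resolvent through $M_\pm^{\pm1}$ as in Deift--Zhou.
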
 

\begin{proof} As it was shown above, the existence of a solution of the RH problem \textbf{RH$^{(xt)}$} follows from Zhou's vanishing lemma. Next, we construct $u(x,t)$ by following Steps~2--4 of Proposition \ref{uniq_I}. As shown in Section
 \ref{sec:6}, the function $u(x,t)$ constructed in this way indeed satisfies the sG equation. Moreover, the considerations of Section \ref{sec:7.2} imply that $u(x,t)$ is $C^1$ with respect to both variables $x$ and $t$, whereas Section \ref{sec:8.3} establishes that $u(x,t)\in \tilde H^{1,2}((0,+\infty))$.

Therefore, it remains to prove that $u(x,0)=u_0(x)$ for all $x\geq 0$.
Notice that ${ M}^{(xt)}(x,0,k)$ satisfies the RH problem \textbf{RH$^{(x)}$}, see Section \ref{subsec:4.1.1}. Thus
 by uniqueness of solution of RH problem \textbf{RH$^{(x)}$}, we conclude that $\hat{ M}^{(xt)}(y,0,k)=\hat{ M}^{(x)}(y,k)$. In particular, 
\[
 {u_x}(x,0)={u}_{0x}(x).
\]

From the solution of the Riemann--Hilbert problem, we extract the quantities $\sin u(x,t)$, $\cos u(x,t)$, and $u_x(x,t)$. Hence, the function $u(x,t)$ can be recovered up to an additive constant of the form $2\pi k$, $k \in \mathbb{Z}$. 
The constant is then uniquely determined by imposing the initial condition, together with continuity of the reconstructed solution.
\end{proof}

\section{The uniqueness and existence results for Problem II}\label{sec:9}

\subsection{Uniqueness result for Problem II}\label{sec:9.1}

Similarly to Problem I, in this section, assuming the existence of a solution to Problem~II (with prescribed behavior as $x \to +\infty$), we address its uniqueness.

\begin{proposition}
Let $u(x,t)$ be a solution of the sG equation in the domain $x<0$, $0<t<T$ such that $u(x,t)\in C^1([0,T],\tilde H^{1,2}((-\infty,0)))$. Then $u(x,t)$ is uniquely determined by $u(x,0)$, $x\in(-\infty,0]$, and $u(0,t)$, $0<t<T$, through a unique solution of the Riemann--Hilbert problem, for which the data (jump matrix and residue condition) are given in terms of spectral functions associated with $u(x,0)$ and $u(0,t)$.
 
\end{proposition}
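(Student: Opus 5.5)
The plan mirrors the proof of Proposition \ref{uniq_I}, now using the master problem $\widecheck{\text{RH}}^{(xt)}$ of Section \ref{subsec:5.2}. Given a solution $u$ of Problem II in the stated class, the Jost solutions $\Phi_1,\Phi_2,\check\Phi_3$ and $\Phi_{01},\Phi_{02},\check\Phi_{03}$ of Section \ref{subsec:2.2} exist for every $(x,t)$. The function $\check M^{(xt)}$ defined by \eqref{M_(xt)__} is then a solution of $\widecheck{\text{RH}}^{(xt)}$: it has the jump \eqref{jump_M_(xt)__}, the normalization \eqref{inf_hatM_(xt)__} and the residue conditions \eqref{res-M_(xt)__}. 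Every ingredient of this problem is built from the spectral functions alone. These are $a,b$ (and hence $\kappa_1^0,\kappa_2^0$ and $\tilde a,\tilde b$ via \eqref{tilde-a--a}), determined by $u(x,0)$, $x\le 0$, through the direct $x$-problem with $\check\Phi_3$. They also include $A,B$ (and hence $\kappa_i^T$ and $\tilde A,\tilde B$ via \eqref{tilde-A--A}), determined by $u(0,t)$ through the direct $t$-problem of Section \ref{subsec:2.1.5}. From these one forms $d,\tilde d,\rho,\tilde\rho,\Xi$ and the zeros $\eta_j$ of $d$.

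The key steps, in order, are as follows. First, I record that the map from data to spectral functions is well defined. For the $t$-part this requires only $\sin v_0,\cos v_0$ on $[0,T]$ and $v_{0t}$, so $v_0\in H^1(0,T)$ suffices for the Volterra equations \eqref{inteq_inf1_x_0}, \eqref{inteq_01_x_0}. Second, I invoke uniqueness for $\widecheck{\text{RH}}^{(xt)}$. I plan to prove it by the Liouville argument of Proposition \ref{prop:Mx}. Both $\det\check M^{(xt)}$ and $M_1M_2^{-1}$ have no jumps, and their poles at $\pm\eta_j$ cancel because the residue conditions are of rank one and column-triangular. The singularity at $k=0$ needs separate care, since $p$ contains $t/k^2$ and $e^{\pm 2\ii kp}$ is essentially singular there. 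This is handled by the disk $\{|k|<\epsilon\}$: inside it the normalization uses $\Psi_0$-type eigenfunctions, and the behaviour \eqref{i_beh-M_(xt)__} shows the solutions are bounded near $0$, so $M_1M_2^{-1}$ has a removable singularity. Third, I read off $u$. By \eqref{inf_M_(xt)__} the $(12)$ coefficient of the $1/(4\ii k)$ term gives $u_x(x,t)$. Then $u(x,t)=\int_{-\infty}^x u_x(y,t)\,\dd y+2\pi k$, with the integer fixed by the behaviour of $u(x,0)$ as $x\to-\infty$ and continuity in $t$. Alternatively, $P^{-1}(x,t)$ from \eqref{i_beh-M_(xt)__} gives $\sin\frac u2,\cos\frac u2$.

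The main obstacle is justifying the uniqueness step rigorously in the presence of the essential singularity at $k=0$ and the possibly non-generic zeros of $d$. In the first attempt I will assume that the zeros $\eta_j$ of $d$ are simple and off $\mathbb R$, as in the statement of the residue conditions. The general case would be handled by replacing residues with jumps on small circles, as in Section \ref{sec:7.3}, and applying Lemma \ref{Lem 5.3}(2) with the nilpotency of $w^\pm$. I should also stress why $u(0,t)$ is genuinely needed, in contrast to Problem I. Here the second column in $\mathbb C^+$ involves $\Phi_1$ and $d^*$, so the jump matrix depends on $A,B$. Moreover, the global relations \eqref{relations_Phi_inf3__}--\eqref{relations_Phi_03_i__} impose no constraint that would determine $A,B$ from $a,b$.
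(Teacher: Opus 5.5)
Your proposal is correct and follows the paper's own route. The data of $\widecheck{\text{RH}}^{(xt)}$ are built only from $a,b$ (from $u(x,0)$) and $A,B$ (from $u(0,t)$), uniqueness of the RH solution pins down $\check M^{(xt)}$, and $u$ is recovered from the $(12)$ entry of the $\frac{1}{4\ii k}$ coefficient at $k=\infty$ followed by integration from $-\infty$; the paper also writes this as $u(x,t)=u(0,t)+\int_0^x u_x(y,t)\,\dd y$, which fixes the additive constant directly from the boundary data.

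Your Liouville argument at $k=0$ goes beyond what the paper writes, but boundedness (or $L^2$-type control) near $k=0$ must be imposed as a condition of the RH problem, not deduced from \eqref{i_beh-M_(xt)__}, which holds only for the Jost-built solution. Otherwise $(I+\frac{A}{k})\check M^{(xt)}$, with a constant nilpotent $A$, would be a second solution with the same jump, normalization and residue conditions.
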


\begin{proof} The uniqueness of the solution to the RH problem implies, using \eqref{i_beh-M_(xt)__}, the following procedure for representing
$u(x,t)$ in terms of $\check M^{(xt)}(x,t,k)$:

  \begin{enumerate}[Step 1.]
\item Given $u(x,0)$ construct $a(k)$, $b(k)$ (see Section \ref{subsec:2.1.4}); given $u(0,t)$ construct $A(k)$ and $B(k)$ (see Section \ref{subsec:2.1.5}).
  
    \item Having $a(k)$, $b(k)$ compute $\kappa_1^0$, $\kappa_2^0$, and determine $\tilde a(k)$ and $\tilde b(k)$ via  \eqref{s_via_til_s}; having $A(k)$ and $B(k)$ compute $\kappa_i^0$, $\kappa_i^T$, $i=1,2$, and determine $\tilde A(k)$ and $\tilde B(k)$ via \eqref{S_via_til_S}.
    With these data, formulate the Riemann–Hilbert problem \textbf{$\widecheck{\text{RH}}^{(xt)}$}, see Section \ref{subsec:5.2}. ;
    \item 
    Solve this RH problem for all $x\leq 0$ and $t\in[0,T]$
    and 
evaluate its  solution $\check M^{(xt)}(x,t,k)$  at $k=\infty$:
    \begin{equation}\label{hatM_xt__inf__}
        \check M^{(xt)}(x,t,k)= I+\frac{1}{4\ii k}\begin{pmatrix}
           -\eta(x,t)&\xi(x,t)\\\xi(x,t)&\eta(x,t)
       \end{pmatrix}+O(\frac{1}{k^2}). 
    \end{equation}
    \item 
    Define $ u_x(x,t)$, $x\leq 0$ and $t\in[0,T]$, from this expansion as follows (cf. \eqref{inf_M_(xt)__}):
    \begin{equation}\label{sol_via_RH_xt__}
      u_x(x,t)=\xi(x,t).
    \end{equation}
 \item  Having $u_{x}(x,t)$, construct $u(x,t)$ by
\begin{equation}\label{Fund_u_xt__}
   u(x,t) = \int^x_{
   -\infty} u_{x}(y,t)\dd y+2\pi k=u(0,t)+ \int_0^x u_{x}(y,t)\dd y,
\end{equation}
where $k$ is defined by the behavior of $u(x,0)$ as $x\to-\infty$.

\end{enumerate}

\end{proof}

\subsection{Behavior as $x\to-\infty$}\label{sec:9.2}

In Section~\ref{sec:9.1}, the Riemann--Hilbert problem was derived under the assumption that a solution to Problem~II for the sine--Gordon equation exists and satisfies prescribed asymptotic conditions as $x \to -\infty$. We now reverse this perspective: instead of starting from a solution of the sine--Gordon equation, we take initial data from an appropriate class and study the corresponding Riemann--Hilbert problem directly. The regularity properties of the quantities involved in the reconstruction procedure were established in Section~\ref{sec:7.3}. The aim of the present section is to analyze the behavior of the reconstructed solution as $x \to -\infty$.
To fix ideas, we assume that $d^*(k)$ has no zeros in $\overline{\mathbb{C}^+}$. 

First, we construct $h$ and $\tilde h$ to improve the behavior at $k=\infty$ and $k=0$, and to achieve the matching near $k=\epsilon$ (c.f. \cite{L12}).

Let
\[
h_0(k)=\frac{\rho_1}{\ii k}, \qquad \tilde h_0(k)=\ii \tilde \rho_1 k-\tilde \rho_2k^2,
\]
where
\[
\rho(k)=\frac{\rho_1}{\ii k}+O\left(\frac{1}{k^2}\right), \qquad 
\tilde \rho(k)=\ii \tilde \rho_1 k-\tilde \rho_2k^2+O(k^3).
\]
Define
\[
F(k)=\Xi(k)+h_0(k)-\tilde h_0(k),
\]
which is analytic in \(\mathbb{C}^+\cap\{|k-\epsilon|<\epsilon/2\}\). For $n\ge1$, set
\[
G(k)=(k+\ii)^{n+2}F(k), \qquad 
T_nG(k)=\sum_{j=0}^n \frac{G^{(j)}(\epsilon)}{j!}(k-\epsilon)^j,
\]
and
\[
H_0(k)=\frac{T_nG(k)}{(k+\ii)^{n+2}}, \qquad 
H(k)=\tfrac12\big(H_0(k)+\overline{H_0(-\bar k)}\big).
\]
Then
\[
h(k)=\frac{\rho_1}{\ii k}+H(k), \qquad 
\tilde h(k)=\ii \tilde \rho_1 k-\tilde \rho_2k^2
\]
satisfy $f(k)=\overline{f(-\bar k)}$, are continuous on $|k|=\epsilon$. Moreover,
\[
h(k)=O\left(\frac{1}{k}\right),\ k\to\infty, \qquad 
\tilde h(k)=O(k),\ k\to 0,
\]
\[
h(k)-\rho(k)=O\left(\frac{1}{k^2}\right),\ k\to\infty, \qquad 
\tilde h(k)-\tilde \rho(k)=O(k^3),\ k\to 0,
\]
and
\[
h(k)-\tilde h(k)+\Xi(k)
=O\big((k-\epsilon)^n\big), \quad k\to\epsilon.
\]

Furthermore, since $\rho(k)-\tilde \rho(k)+ \Xi(k)=0$ for $k\in\mathbb{R}$, we also have
\[
h(k)-\rho(k)=-\tilde \rho(k) + \tilde h(k)+O\big((k-\epsilon)^n\big), \quad k\to\epsilon.
\]

Introduce a matrix-valued function $\check{\check M}(x,t,k)$ related to
$\check M(x,t,k)$ by
\begin{equation}\label{hat_M_}
   \check{\check M}(x,t,k)=\begin{cases}
        \check M(x,t,k)\eul^{-\ii k p(x,t,k)\sigma_3}\begin{pmatrix}
            1&-h(k)\\
        0&1
   \end{pmatrix}\eul^{\ii k p(x,t,k)\sigma_3},\quad \{|k|>\epsilon\}\cap\mathbb{C}_+,\\
        \check  M(x,t,k)\eul^{-\ii k p(x,t,k)\sigma_3}\begin{pmatrix}
            1&0\\
        h^*(k)&1
\end{pmatrix}\eul^{\ii k p(x,t,k)\sigma_3},\quad \{|k|>\epsilon\}\cap\mathbb{C}_-,\\
     \check  M(x,t,k)\eul^{-\ii k p(x,t,k)\sigma_3}\begin{pmatrix}
            1&-\tilde h(k)\\
       0 &1
   \end{pmatrix}\eul^{\ii k p(x,t,k)\sigma_3},\quad \{|k|<\epsilon\}\cap\mathbb{C}_+,\\
       \check   M(x,t,k)\eul^{-\ii k p(x,t,k)\sigma_3}\begin{pmatrix}
            1&0\\
        \tilde h^*(k)&1
\end{pmatrix}\eul^{\ii k p(x,t,k)\sigma_3},\quad \{|k|<\epsilon\}\cap\mathbb{C}_-
    \end{cases}  
    \end{equation}
and conclude that it satisfies the following RH problem:
\begin{enumerate}
    \item Jump condition across $\mathbb{R}\cup\{|k|=\epsilon\}$
    \begin{subequations}
    \label{jump_M_(xt)_hat_}
        \begin{equation}
           \check{\check M}_-^{(xt)}(x,t,k)= \check{\check M}_+^{(xt)}(x,t,k)\check{\check J}^{(xt)}(x,t,k),\quad k\in \mathbb{R}\cup\{|k|=\epsilon\} 
        \end{equation}
        where
         \begin{equation} \label{jump_M_(xt)_matr_hat_}
        \check{\check J}^{(xt)}(x,t,k)=\eul^{-\ii kp(x,t,k)\sigma_3} \check{\check J}^{(xt)}_0(k)\eul^{\ii kp(x,t,k)\sigma_3}
        \end{equation} 
        with $p(x,t,k)=x-\frac{t}{4 k^2}$ and
\begin{equation}\label{jump_M_(xt)_matr_0_hat_}
\check{\check J}^{(xt)}_0(k)=\begin{cases}
      \begin{pmatrix}
       1&h(k)-\rho(k)\\0&1
   \end{pmatrix} \begin{pmatrix}
       1&0\\h^*(k)-\rho^*(k)&1
   \end{pmatrix},\quad k\in\mathbb{R}\cap\{|k| >\epsilon\},\\
   \begin{pmatrix}
       1&0\\\tilde \rho^*(k)-\tilde h^*(k)&1
\end{pmatrix}\begin{pmatrix}
       1&\tilde \rho(k)-\tilde h(k)\\0&1
   \end{pmatrix},\quad k\in\mathbb{R}\cap\{|k| <\epsilon\},\\
   \begin{pmatrix}
       1&h(k)-\tilde h(k)+\Xi(k)\\
       0&1
   \end{pmatrix},\quad k\in\mathbb{C}^+\cap\{|k| =\epsilon\},\\
   \begin{pmatrix}
       1&h^*(k)-\tilde h^*(k)+\Xi^*(k)\\
       0&1
\end{pmatrix},\quad k\in\mathbb{C}^-\cap\{|k| =\epsilon\}.
   \end{cases} 
\end{equation}
    \end{subequations}
Recall that 
\[
\rho(k)-\tilde \rho(k)+ \Xi(k)=0,\quad k\in\mathbb{R}.
\]
\item Behavior at $\infty$:
\begin{equation}\label{inf_hatM_(xt)_hat_}
       \check{\check M}^{(xt)}(x, t,k)=I+O\left(\frac{1}{ k}\right),\quad k\to\infty.
\end{equation}
\end{enumerate}
Notice that
\begin{equation}\label{hatM_via_mu}
     \check{\check M}(x,t,k)=I+\frac{1}{2\pi\ii}\int_\Sigma \frac{\mu(x,t,z)(\check w^+ + \check w^-)(x,t,z)}{z-k}dz.
\end{equation}
In particular, the properties of $h$ and $\tilde h$ together with dominated convergence theorem (limits are not-tangential) imply that
\begin{equation}\label{xi_via_mu_}
\begin{aligned}
 \xi(x,t)=&\frac{2}{\pi}\int_{\mathbb{R}\setminus(-\epsilon,\epsilon)} \mu_{11}(x,t,z) (h(z)-\rho(z))\eul^{-2\ii z x+\frac{\ii t}{2z}} \dd z-\\
 &\frac{2}{\pi}\int_{(-\epsilon,\epsilon)} \mu_{11}(x,t,z) (\tilde \rho(z)-\tilde h(z))\eul^{-2\ii z x+\frac{\ii t}{2z}} \dd z -\\
  &\frac{2}{\pi}\int_{|z|=\epsilon, z\in \mathbb{C}^+} \mu_{11}(x,t,z) (h(k)-\tilde h(k)+ \Xi(k))\eul^{-2\ii z x+\frac{\ii t}{2z}} \dd z.
\end{aligned}   
\end{equation}
\begin{equation}\label{beta_via_mu_}
\begin{aligned}
 \beta(x,t)=&\frac{1}{2\pi\ii}\int_{\mathbb{R}\setminus(-\epsilon,\epsilon)} \mu_{11}(x,t,z) \frac{(h(z)-\rho(z))}{z}\eul^{-2\ii z x+\frac{\ii t}{2z}} \dd z-\\
 &\frac{2}{\pi}\int_{(-\epsilon,\epsilon)} \mu_{11}(x,t,z) \frac{(\tilde \rho(z)-\tilde h(z))}{z}\eul^{-2\ii z x+\frac{\ii t}{2z}} \dd z -\\
  &\frac{2}{\pi}\int_{|z|=\epsilon, z\in \mathbb{C}^+} \mu_{11}(x,t,z) \frac{(h(k)-\tilde h(k)+ \Xi(k))}{z}\eul^{-2\ii z x+\frac{\ii t}{2z}} \dd z.
\end{aligned}   
\end{equation}

Again, following the ideas of 
\cite{Zhou98} (c.f. Lemma 2.9), we conclude that the first two terms in \eqref{xi_via_mu_}--\eqref{beta_via_mu_} can be treated in the same way as in \ref{lem_est} and \ref{lem_est_2}, and yield the desired $L^2$-bounds. As for the third term, since the integration is over the upper semicircle and $x\leq 0$, the exponential factor $\eul^{(-2ikx+\tfrac{it}{2k})}$ decays exponentially fast as $x\to-\infty$ on the contour away from the real line. In addition, the choice of $h$ and $\tilde h$ ensures that $h-\tilde h+\Xi$ vanishes to the required order at the points where the arc intersects the real axis. It follows that this contribution is also in $L^2$. Therefore, we obtain the following result
\begin{corollary}
    Assume that $u_0(x)\in H^{2,2}((-\infty,0))+2\pi k$. Then for all fixed $t\geq 0$
    \begin{equation}\label{beh_xi_alp_beta_2_}
         \xi(x,t),~\beta(x,t)\in L^2((1+x^{2})\dd x,(-\infty,0)).
    \end{equation}
\end{corollary}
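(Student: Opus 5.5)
The plan is to follow the route used for Problem I in Section~\ref{sec:8.3}, working with the regularized problem for $\check{\check M}^{(xt)}$ and the integral representations \eqref{xi_via_mu_}--\eqref{beta_via_mu_}. The first step is to check the regularity of the spectral data. The assumption $u_0\in H^{2,2}((-\infty,0))+2\pi k$, together with $v_0\in H^1(0,T)$ (which makes $A$, $B$ analytic off $k=0$ with controlled behaviour), should give $\rho\in H^1(\mathbb{R}\setminus(-\epsilon,\epsilon))$ with $\frac{\dd}{\dd k}\rho\in L^2$; this is proved as in \cite{L12}, as was done for $r$. Then $h-\rho=O(k^{-2})$ at infinity, $\tilde\rho-\tilde h=O(k^3)$ at $0$, and at the junction points $\pm\epsilon$ we have $h-\tilde h+\Xi=O((k-\epsilon)^n)$ with $n\ge 2$. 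From these, the piecewise functions $g=h-\rho$ on $|k|>\epsilon$ and $g=\tilde\rho-\tilde h$ on $|k|<\epsilon$, multiplied by $\eul^{-\ii t/(2k)}$, lie in $H^1(\mathbb{R})$. The matching identity $\rho-\tilde\rho+\Xi=0$ ensures that no jump is created at $\pm\epsilon$, which is necessary because an $H^1$ function cannot jump there. For $\beta$ the same holds with an extra factor $1/z$, using the vanishing to order three at $0$.

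The second step treats the real-line contributions. As in Lemma~\ref{lem_est_2}, expand $\mu=I+C_wI+C_w(\mu-I)$ to split each real-axis integral into $I_1+I_2+I_3$. The term $I_1$ is a Fourier transform of an $H^1$ function evaluated at $2x$, so it lies in $L^2((1+x^2)\dd x)$. Note that for $x\le0$ the relevant half-line in Plancherel's formula is now $\eta>-2x$, with the roles of $C_\pm$ exchanged relative to Problem I. The terms $I_2$ and $I_3$ are handled with the analogue of Lemma~\ref{lem_est}, namely $\|C_\mp \check w^\pm\|_{L^2}\lesssim(1+x^2)^{-1/2}$, together with Cauchy's theorem to remove the products that are analytic in a single half-plane. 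For $I_3$ one also needs a uniform bound on $(\mathds{1}-C_w)^{-1}$ for $x\le0$. This bound comes from solvability for every $x$ (vanishing lemma) plus $C_w\to$ a limit operator as $x\to-\infty$, whose inverse exists.

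The third step handles the arc term over $\{|z|=\epsilon\}\cap\mathbb{C}^+$. On this arc $|\eul^{-2\ii zx+\ii t/(2z)}|=\eul^{2x\Im z}\eul^{t\Im(1/z)/2}$, and the second factor is bounded for $t\in[0,T]$. For $x\le0$ the first factor decays like $\eul^{-2|x|\Im z}$. Away from the endpoints this gives exponential decay. Near $z=\pm\epsilon$ we have $\Im z\sim|z\mp\epsilon|$, and the vanishing factor $|z\mp\epsilon|^n$ together with bounded $\mu$ in $L^2$ gives
\[
\int_0^{c}s^{n}\eul^{-2|x|s}\,\dd s\lesssim (1+|x|)^{-n-1}.
\]
This is square-integrable against $(1+x^2)\dd x$ once $n\ge1$.

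I expect the main obstacle to be the first step, together with the uniform invertibility of $\mathds{1}-C_w$ as $x\to-\infty$. Establishing that $\frac{\dd}{\dd k}\rho\in L^2$ requires controlling $d^*(k)$ away from zero on $\mathbb{R}$ and the $t$-spectral data near $k=0$, where the factor $\eul^{\pm\ii T/(2k)}$ oscillates. I would first try to show this by writing $\rho$ through the whole-line-type data $a,b$ times bounded smooth factors in $A,B$, and then use the assumption that $d^*$ has no real zeros.
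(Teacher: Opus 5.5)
Your outline is the paper's own route: regularize with $h,\tilde h$, treat the real-axis pieces as in Lemmas~\ref{lem_est}--\ref{lem_est_2}, and handle the arc by exponential decay plus vanishing at $\pm\epsilon$. However, the first step fails at $k=0$, and not for technical reasons. The paper's sketch has the same gap.

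\emph{The regularization near $k=0$.} Inside the disk, the regularizing factor in \eqref{hat_M_} enters as $-\tilde h(k)\,\eul^{-2\ii kp}$. Its modulus is $|\eul^{-2\ii kp}|=\exp\bigl(\Im k\,(2x+\tfrac{t}{2|k|^2})\bigr)$, which blows up as $k\to0$ in $\mathbb{C}^+$ for every $t>0$. Hence $\check{\check M}$ is not even bounded near $0$ unless $\tilde h\equiv0$. So you cannot use \eqref{xi_via_mu_}--\eqref{beta_via_mu_} with $\tilde\rho-\tilde h=O(k^3)$. In Problem I the analogous factor $\eul^{2\ii kp}$ is bounded in $\mathbb{C}^+$ for $x,t\ge0$, which is why the trick works there.

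\emph{What happens without the subtraction.} The inner entry is then $\tilde\rho\,\eul^{\ii(2k|x|+t/(2k))}$. Its phase has the real stationary points $\pm\sqrt{t/(4|x|)}$, which tend to $0$ as $x\to-\infty$. Stationary phase then gives $\xi\sim|x|^{-5/4}$ and $\beta\sim|x|^{-3/4}$ whenever the non-oscillating $O(k)$ coefficient of $\tilde\rho$ is nonzero. A first-order computation shows this coefficient is proportional to $v_{0t}(0)-\int_{-\infty}^0\sin u_0\,\dd y$. This equals $\lim_{x\to-\infty}u_t(x,0)$, since $u_t(x,0)=v_{0t}(0)-\int_x^0\sin u_0\,\dd y$, and nothing in the hypotheses forces it to vanish.

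\emph{A concrete check.} Take the linearization $u_{xt}=u$ with $u_0\equiv0$ and $v_0(t)=ct$. The solution is $u=c\sqrt{t/|x|}\,J_1(2\sqrt{|x|t})$, with $u_x=\frac{ct}{|x|}J_2(2\sqrt{|x|t})$, where $J_1,J_2$ are Bessel functions. Neither lies in $L^2((1+x^2)\dd x,(-\infty,0))$. So an additional corner compatibility condition is needed: at least $v_{0t}(0)=\int_{-\infty}^0\sin u_0\,\dd y$, plus a next-order one for $\beta$.

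\emph{Where the oscillation actually bites.} Your closing worry about $\eul^{\pm\ii T/(2k)}$ is aimed at the wrong function. $\rho$ enters only for $|k|>\epsilon$, where $A,B$ are smooth; it is $\tilde\rho$ on $(-\epsilon,\epsilon)$ that is affected. There the terms $O(k\,\eul^{-\ii T/(2k)})$ in $\tilde A,\tilde B$ have $k$-derivative of order $1/k$. This also rules out a Taylor expansion of $\tilde\rho$ along $\mathbb{R}$, so those pieces need a non-stationary-phase argument rather than Plancherel.

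\emph{Uniform bound on the resolvent.} The bound on $(\mathds{1}-C_w)^{-1}$ does not follow from ``$C_w$ tends to a limit operator''. As $x\to-\infty$ the operators $C_{w(x)}$ tend to $0$ only strongly. Testing on $f$ whose oscillation cancels that of $w^\pm$ shows that $\|C_{w(x)}\|$ does not tend to $0$. Strong convergence together with invertibility for each $x$ gives no uniform bound. The standard device is to approximate the jump entries uniformly by rational functions, show $\|C_w^2\|\to0$ in norm, and use $(\mathds{1}-C_w)^{-1}=(\mathds{1}+C_w)(\mathds{1}-C_w^2)^{-1}$. The paper is silent on this point as well.

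\emph{Orientation at $\pm\epsilon$.} The segment $(-\epsilon,\epsilon)$ is traversed from right to left, hence the minus sign in front of the second integral in \eqref{xi_via_mu_}. So the function continuing $h-\rho$ across $\pm\epsilon$ is $\tilde h-\tilde\rho$. With your choice $g=\tilde\rho-\tilde h$, the glued function jumps at $\pm\epsilon$ and is not in $H^1$.
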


\subsection{Existence result for Problem II}\label{sec:9.3}

\begin{proposition}
    Let $\{u_0(x)\in H^{2,2}((-\infty,0))+2\pi k; ~v_0(t)\in  H^{1}(0, T)\}$ be a set of functions satisfying internal compatibility  condition  
     $u_0(0)=v_0(0)$. 
Then the IBVP  
\begin{subequations}\label{SP_IBVP__}
\begin{align}\label{SP-2_IBVP__}
&u_{xt} = \sin u,\\
\label{boundary_IBVP__}
    &u(0,t) = v_0(t), \quad
     0\leq t < T\leq\infty;\\
     \label{ic_IBVP__}
     &u(x,0) = u_0(x), \quad x \leq 0;\\
     &u(x,t)\in \tilde H^{1,2}((-\infty,0)), \quad x \to-\infty, \quad
     0\leq t < T<\infty.
\end{align}
\end{subequations}
has a unique solution, $u(x,t)$, which can be represented in terms of the solution of the associated RH problem \textbf{$\widecheck{\text{RH}}^{(xt)}$} via \eqref{sol_via_RH_xt__}--\eqref{Fund_u_xt__}.

\end{proposition}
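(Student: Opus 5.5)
The proof will follow the scheme used for the existence result for Problem I, with the boundary data now entering through $A(k),B(k)$.

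\textbf{Step 1: spectral data and the RH problem.} From $u_0$ we compute $a(k),b(k)$ by \eqref{inteq_inf3_} at $t=0$. From $v_0$ we compute $A(k),B(k)$ by \eqref{inteq_inf1_x_0}; since $v_0\in H^1(0,T)$, we have $\sin v_0,\cos v_0\in L^\infty$ and $v_{0t}\in L^2$, so \eqref{inteq_01_x_0} is also well defined. The constants $\kappa_i^0$ are taken from $a(0),b(0)$, and $\kappa_i^T$ from $v_0(T)$. The compatibility $u_0(0)=v_0(0)$ makes the two values of $\kappa_i^0$ (one from $u_0(0)$, one from the $t$-data) coincide. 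Hence $\tilde a,\tilde b,\tilde A,\tilde B$ and the jump \eqref{jump_M_(xt)_0__} are consistent, and the identity $\rho-\tilde\rho+\Xi=0$ on $\mathbb{R}$ holds. Solvability of $\widecheck{\text{RH}}^{(xt)}$ for all $x\le 0$, $t\in[0,T]$ will come from Lemmas \ref{Lem 5.2}, \ref{Lem 5.3} together with Zhou's vanishing lemma. For this I would check that $\check J^{(xt)}$ is positive definite on $\mathbb{R}$ (its real-axis factorization has the form $\begin{pmatrix}1&\rho\\0&1\end{pmatrix}\begin{pmatrix}1&0\\\rho^*&1\end{pmatrix}$ up to conjugation) and has the Schwarz-type symmetry on the circle. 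Zeros of $d$ are handled as in Section~\ref{sec:7.3}.

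\textbf{Step 2: reconstruction and regularity.} Section~\ref{sec:6} (Propositions~\ref{Prop_Lax_y}, \ref{Prop_Lax_t}, \ref{reduce}) shows that $\xi,\alpha,\beta$ extracted from $\check M^{(xt)}$ define $u$ with $u_x=\xi$ solving $u_{xt}=\sin u$. Section~\ref{sec:7.3} gives the $C^1$ dependence on $(x,t)$, and the Corollary of Section~\ref{sec:9.2} gives $\xi,\beta\in L^2((1+x^2)dx,(-\infty,0))$, i.e.\ $u(\cdot,t)\in\tilde H^{1,2}((-\infty,0))$. Uniqueness is the uniqueness Proposition of Section~\ref{sec:9.1}.

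\textbf{Step 3: initial condition.} Set $t=0$. Write $\Phi_1=\Phi_2 e^{-ikp\sigma_3}Se^{ikp\sigma_3}$ and $d=aA^*+bB^*$, i.e.\ right-multiply the columns of $\check M^{(x)}$ by $S$-dependent triangular factors. This gives an explicit transformation taking $\check M^{(xt)}(x,0,k)$ to a solution of $\widecheck{\text{RH}}^{(x)}$. The transformation factors are analytic in the appropriate half-planes with no extra poles; this is where the Global Relation \eqref{relations_Phi_inf3__}, \eqref{relations_Phi_03_i__} enters, which for Problem II is automatic. Uniqueness of $\widecheck{\text{RH}}^{(x)}$ then gives $u_x(x,0)=u_{0x}(x)$, and fixing the $2\pi k$ constant via $x\to-\infty$ gives $u(x,0)=u_0(x)$.

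\textbf{Step 4: boundary condition, the main obstacle.} Set $x=0$. Then $p=-t/(4k^2)$, and the column $\check\Phi_3$ must be re-expressed through $\Phi_2$ and $s$. Multiplying by suitable triangular matrices built from $a,b$ converts $\check M^{(xt)}(0,t,k)$ into a function with the jumps of \textbf{RH$^{(t)}$} \eqref{jump_M_(t)}. The delicate point is that the conjugating factors involve $e^{\pm 2ikx}$, which are harmless only because $x=0$. They also involve $b^*/a$-type entries in regions where these are not a priori analytic, and the poles at zeros of $d$ must be traded for zeros of $A$. I would first try the factor
\[
\begin{pmatrix} a^*(k) & b(k)\\ -b^*(k) & a(k)\end{pmatrix}^{\pm1}
\]
on the real line, and verify analyticity off it using $d=aA^*+bB^*$ and the small-circle jumps. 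The behavior near $k=0$ and $|k|=\epsilon$ must also be checked against \eqref{i_beh-M_(t)}. Once this is done, uniqueness for \textbf{RH$^{(t)}$} gives $u_t(0,t)=v_{0t}(t)$ (via the $k\to0$ expansion and \eqref{hat_v_j_via_RH_t}). Together with $u(0,0)=u_0(0)=v_0(0)$, this yields $u(0,t)=v_0(t)$.
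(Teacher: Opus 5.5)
Your architecture matches the paper's. Solvability of $\widecheck{\text{RH}}^{(xt)}$ comes from the vanishing lemma, and Sections~\ref{sec:6}, \ref{sec:7.3}, \ref{sec:9.2} give the equation, the regularity and the decay. The boundary values are then identified by right-multiplying $\check M^{(xt)}(x,0,k)$ and $\check M^{(xt)}(0,t,k)$ by explicit factors and invoking uniqueness for $\widecheck{\text{RH}}^{(x)}$ and \textbf{RH}$^{(t)}$.

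The gap is in Step~4, which you correctly flag as the crux: it is never carried out, and the candidate you propose cannot work. The matrix $s(k)$ exists only on $\mathbb{R}$. For Problem~II, $a,b$ are analytic in $\mathbb{C}^-$ while $a^*,b^*$ are analytic in $\mathbb{C}^+$, so right multiplication by $s^{\pm1}$ does not yield a sectionally meromorphic function. Nor can the factor be built from $a,b$ alone. In $\mathbb{C}^+\cap\{|k|>\epsilon\}$ the second column of $\check M^{(xt)}$ is $\Phi_1^{(2)}/d^*$, whereas that of $M^{(t)}$ is $\Phi_1^{(2)}/A$, which forces a diagonal part $\mathrm{diag}(A/d^*,d^*/A)$. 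The factor that works is $P^{(t)}=\eul^{\frac{\ii t}{4k}\sigma_3}P^{(t)}_0\eul^{-\frac{\ii t}{4k}\sigma_3}$ with
\[
P^{(t)}_0=\begin{pmatrix} A/d^* & 0\\ b^* & d^*/A\end{pmatrix}\ \text{in } \mathbb{C}^+\cap\{|k|>\epsilon\},\qquad
P^{(t)}_0=\begin{pmatrix} d/A^* & -b\\ 0 & A^*/d\end{pmatrix}\ \text{in } \mathbb{C}^-\cap\{|k|>\epsilon\},
\]
together with the tilded analogues inside $|k|<\epsilon$. It is suggested by the identities $\check\Phi_3^{(1)}=a^*\Phi_2^{(1)}-b^*\eul^{2\ii kp}\Phi_2^{(2)}$ and $\Phi_1^{(2)}=B\eul^{-2\ii kp}\Phi_2^{(1)}+A\Phi_2^{(2)}$. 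Since $d^*=a^*A+b^*B$, these give $\frac{A}{d^*}\check\Phi_3^{(1)}+b^*\eul^{2\ii kp}\frac{\Phi_1^{(2)}}{d^*}=\Phi_2^{(1)}$. These eigenfunction identities are only heuristics, however. In the existence proof there is no solution and hence no $\check\Phi_3(0,t,k)$, so you must verify the conditions of \textbf{RH}$^{(t)}$ for $\check M^{(xt)}(0,t,k)P^{(t)}(t,k)$ directly from the RH data.

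Those verifications are exactly what your ``must be checked'' leaves open.
\begin{itemize}
\item Normalization follows from $A/d^*=1+O(1/k)$ and $b^*\eul^{-\frac{\ii t}{2k}}=O(1/k)$.
\item The poles at the zeros $-\eta_j$ of $d^*$ cancel by \eqref{res-M-_(xt)__}. New poles appear at the zeros $\mu_j$ of $A$, and they yield \eqref{res-M+_(t)} via $d^*(\mu_j)=b^*(\mu_j)B(\mu_j)$.
\item Near $k=0$ the delicate exponential is not $\eul^{\pm2\ii kx}$ but $\eul^{\mp\frac{\ii t}{2k}}$. It is exponentially small as $k\to0$ in $\mathbb{C}^\pm$ for $t>0$, so $P^{(t)}=I+O(k)$ there. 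Hence the leading term $P^{-1}(0,t)$, and with it $\sin\frac{v_0}{2}$ and $\cos\frac{v_0}{2}$, is unchanged.
\end{itemize}

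A smaller inaccuracy of the same kind affects Step~3. The factor $\begin{pmatrix}1&-B\eul^{-2\ii kx}/(a^*d^*)\\0&1\end{pmatrix}$ used in $\mathbb{C}^+$ does have poles, at the zeros of $a^*$ and $d^*$. These poles are precisely what trade the residue conditions of $\widecheck{\text{RH}}^{(xt)}$ for those of $\widecheck{\text{RH}}^{(x)}$. Its normalization rests on $B=O(1/k)$ and on $|\eul^{-2\ii kx}|\le1$ for $x\le0$, $k\in\overline{\mathbb{C}^+}$, not on a global relation.
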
 

\begin{proof}
The existence of a solution of the RH problem \textbf{$\widecheck{\text{RH}}^{(xt)}$} follows from Zhou's vanishing lemma. Furthermore, Section \ref{sec:6} shows that $u(x,t)$ satisfies the sG equation, while Section \ref{sec:7.3} implies that $u(x,t)$ is $C^1$ in both $x$ and $t$, and Section~\ref{sec:9.2} establishes that $u(x,t)\in \tilde H^{1,2}((-\infty,0))$.

Therefore, it remains to prove that:
\begin{enumerate}[1.]

    \item  $u(x,0)=u_0(x)$.

    \item $ u(0,t) = v_0(t)$.
\end{enumerate}

\textbf{1.} Following the approach used for the right half-line problem, we establish a relation between the
 solution $\check{ M}^{(xt)}(x,t,k)$ of the Riemann--Hilbert problem \textbf{$\widecheck{\text{RH}}^{(xt)}$} problem evaluated at $t = 0$ and the solution $\check M^{(x)}(x,k)$ of the \textbf{$\widecheck{\text{RH}}^{(x)}$}  in the form of the multiplication by an matrix factor:

     \begin{equation*}
         P^{(x)}(x,k)=\eul^{-\ii kx\sigma_3}P_0^{(x)}(k)\eul^{\ii kx\sigma_3}
     \end{equation*}
        with
        \begin{equation*}
          P_0^{(x)}(k)=\begin{cases}
        \begin{pmatrix}
           1&-\frac{B(k)}{a^*(k)d^*(k)}\\
           0&1
        \end{pmatrix},\quad k\in\mathbb{C}^+\setminus\{|k|<\epsilon\}\\
\begin{pmatrix}
           1&-\frac{\tilde B(k)}{\tilde a^*(k)\tilde d^*(k)}\\
           0&1
        \end{pmatrix},\quad k\in\mathbb{C}^+\cap\{|k|<\epsilon\}\\
        \begin{pmatrix}
           1&0\\
           \frac{B^*(k)}{a(k)d(k)}&1
        \end{pmatrix},\quad k\in\mathbb{C}^-\setminus\{|k|<\epsilon\},\\
       \begin{pmatrix}
           1&0\\
           \frac{\tilde B^*(k)}{\tilde a(k)\tilde d(k)}&1
        \end{pmatrix},\quad k\in\mathbb{C}^-\cap\{|k|<\epsilon\}.
     \end{cases}
       \end{equation*}

Thus, the problem reduces to show that  ${ N}^{(x)}(x,k)$ defined by
 \begin{equation}
   { N}^{(x)}(x,k)=\check{ M}^{(xt)}(x,0,k)P^{(x)}(x,k)
 \end{equation}
 satisfies the Riemann--Hilbert problem \textbf{$\widecheck{\text{RH}}^{(x)}$}.

 \begin{itemize}
     \item The jump conditions match by construction.

     \item Notice that $-\frac{B(k)}{a^*(k)d^*(k)}\eul^{-2\ii k x}=O\left(\frac{\eul^{-2\ii k x}}{k}\right)$. In particular, we have $-\frac{B(k)}{a^*(k)d^*(k)}\eul^{-2\ii k x}=O\left(\frac{1}{k}\right)$ for $x\leq0$ in $\mathbb{C}^+$, and thus the normalization condition is satisfied.

     \item Let's check that the residue conditions match.

    For $k\in \mathbb{C}^+$, we have

    \begin{align}
     { N}^{(xt)(1)}(x,k)&=\check{M}^{(xt)(1)}(x,0,k),\\
     { N}^{(xt)(2)}(x,k)&=-\frac{B(k)\eul^{-2\ii k x}}{a^*(k)d^*(k)}\check{M}^{(xt)(1)}(x,0,k)+\check{M}^{(xt)(2)}(x,0,k).
 \end{align}
Taking into account the residue condition \eqref{res-M-_(xt)__} and the symmetries \eqref{sym_a_} and \eqref{sym_A_}, the singularities of the second column at $k=-\eta_j$ cancel.

On the other hand, at $k=- k_j$, the second column is singular due to the singularity of $\frac{1}{a^*(k)}$, and, using $d^*(-k_j)=b^*(- k_j)B(- k_j)$, the corresponding residue condition takes the form \eqref{res-M-_(x)__}. 

The singularities in $\mathbb{C}^-$  can be treated in a similar way.

 \end{itemize}

 Thus, by uniqueness of solution of RH problem \textbf{$\widecheck{\text{RH}}^{(x)}$}, we conclude that $\check{ M}^{(xt)}(x,0,k)P^{(x)}(x,k)=\check{ M}^{(x)}(x,k)$.

 Recall that $u_{0x}(x)$ is expressed in terms of the second coefficient in the expansion of $\check{ M}^{(x)}(x,k)$ near $k=\infty$. In this regard, notice that $-\frac{B(k)}{a^*(k)d^*(k)}\eul^{-2\ii k x}$ decays exponentially fast for $x<0$ in $\mathbb{C}^+$,
which implies that
\[
 {u}_{x}(x,0)={u}_{0x}(x).
\]

\textbf{2.}
Introduce
\[
     P^{(t)}_0(k)=\begin{cases}
         \begin{pmatrix}
             \frac{A(k)}{d^*(k)}&0\\
             b^*(k)&\frac{d^*(k)}{A(k)}
         \end{pmatrix},\quad k\in\mathbb{C}^+\cap\{|k|>\epsilon\},\\
                \begin{pmatrix}
             \frac{\tilde A(k)}{\tilde d^*(k)}&0\\
             \tilde b^*(k)&\frac{\tilde d^*(k)}{\tilde A(k)}
         \end{pmatrix},\quad k\in\mathbb{C}^+\cap\{|k|<\epsilon\},\\
                \begin{pmatrix}
             \frac{d(k)}{A^*(k)}&-b(k)\\
            0&\frac{A^*(k)}{d(k)}
         \end{pmatrix},\quad k\in\mathbb{C}^-\cap\{|k|>\epsilon\},\\
                  \begin{pmatrix}
             \frac{\tilde d(k)}{\tilde A^*(k)}&-\tilde b(k)\\
            0&\frac{\tilde A^*(k)}{\tilde d(k)}
         \end{pmatrix},\quad k\in\mathbb{C}^-\cap\{|k|<\epsilon\},
     \end{cases}
 \]
and define  
\[
   \hat{{ N}}^{(xt)}(t,k):=\check{ M}^{(xt)}(0,t,k)P^{(t)}(t,k)
 \]
with $P^{(t)}(t,k)=\eul^{\frac{\ii t}{4 k}\sigma_3}P^{(t)}_0(k)\eul^{-\frac{\ii t}{4 k}\sigma_3}$.

 \begin{enumerate}
     \item The jump condition 
     for $\hat{{ N}}^{(xt)}(t,k)$ coincides with that for $\check{ M}^{(t)}(t,k)$
    by construction.

    \item Noticing that $\frac{A(k)}{d^*(k)}=1+O(\frac{1}{k})$ and $b^*(k)\eul^{-\frac{\ii t}{2 k}}=O(\frac{1}{k})$ in $\mathbb{C}^+$ for all $t\geq0$, the normalization condition is satisfied.

    \item Let's check that the residue conditions match.

    For $k\in \mathbb{C}^+$, we have

      \begin{align}
     \hat{{ N}}^{(xt)(1)}(t,k)&=\frac{A(k)}{d^*(k)}\check{M}^{(xt)(1)}(0,t,k)+b^*(k)\eul^{-\frac{\ii t}{2 k}}\check{M}^{(xt)(2)}(0,t,k),\\
     \hat{{ N}}^{(xt)(2)}(t,k)&=\frac{d^*(k)}{A(k)}\check{M}^{(xt)(2)}(0,t,k).
 \end{align}
Taking into account the residue condition \eqref{res-M-_(xt)__} and  $A(-\eta_j)=-\frac{b^*(-\eta_j) B(-\eta_j)}{a^*(-\eta_j)}$ together with the symmetries \eqref{sym_a_} and \eqref{sym_A_}, the singularities of the first column at $k=-\eta_j$ cancel.

On the other hand, at $k= \mu_j$, the second column is singular due to the singularity of $\frac{1}{A(k)}$, and, using $d^*(\mu_j)=b^*(\mu_j)B(\mu_j)$, the corresponding residue condition takes the form \eqref{res-M+_(t)}. 

The singularities in $\mathbb{C}^-$  can be treated in a similar way.
    \end{enumerate}

    By the uniqueness of solution of the RH problem \textbf{$\widecheck{\text{RH}}^{(t)}$}, it follows  that
 $\hat{{ N}}^{(xt)}(t,k)=\check{ M}^{(t)}(t,k)$.

Since $v_{0t}(t)$ is expressed in terms of the first coefficient (via $\sin\tfrac{v_0}{2}$ and $\cos\tfrac{v_0}{2}$) in the expansion of $\check{ M}^{(t)}(0,t,k)$ near $k=0$, it is important to control $P^{(t)}(t,k)$ near this point. 
In this regard, notice that in $\mathbb{C}^+$ as $k\to 0$
\[
\frac{\tilde A(k)}{\tilde d^*(k)}=1+O(k) 
\]
and
\[
\tilde b^*(k)\eul^{2\ii k\left(-\frac{t}{4 k^2}\right)}=O(k\eul^{- \frac{\ii t}{2k}}).
\]
In particular, $\tilde b^*(k)\eul^{-\frac{\ii t}{2 k}}$ decays exponentially fast as $k\to 0$ in $\mathbb{C}^+$ for all $t>0$. Therefore,
 \[
 P^{(t)}(t,k)= I+\begin{pmatrix}
         O(k)& 0\\
         O(k\eul^{- \frac{\ii t}{2k}}) &O(k)
     \end{pmatrix}, \quad k\to 0,
 \]
 and the expressions for $u_t(0,t)$,  given in \eqref{hmbbgg}
 remains unchanged.
    
\end{proof}

\section{Concluding remarks} Throughout this paper, we consider the generic case, in the sense that the initial data for Problem I and the initial and boundary data for Problem II are assumed to generate the corresponding spectral functions with finitely many zeros, all of which are simple. Under this assumption, the associated RH problems can be formulated by imposing residue conditions at the zeros of the spectral functions. The restriction to simple zeros is not essential, however. Indeed, the general case can be treated by replacing the residue conditions with a jump condition on a sufficiently large circle enclosing all the zeros; see \cites{BFS04,BFS06,BS08}.

\section{Acknowledgment}

This research was funded in part by the Austrian Science Fund (FWF), grant no.
10.55776/ESP691, and the Research Council of Norway, project 361083 (Ukraina-NASTRAN cooperation).

\bibliographystyle{RS}
\bibliography{shepelsky_etal}

\end{document}